\documentclass[a4paper,reqno]{article}

\usepackage{latexsym,amsmath,mathtools,epsfig,epsf}
\usepackage{amsmath,amsthm,amstext,amscd,amssymb,euscript,url}

\usepackage{comment}
\usepackage{mathrsfs}
\usepackage{bbm}
\usepackage[english]{babel}
\usepackage[normalem]{ulem}
\usepackage{hyperref}
\usepackage[utf8]{inputenc}
\usepackage{caption}
\usepackage{subcaption}
\usepackage{color}
\usepackage{enumerate}
\usepackage[title]{appendix}

\allowdisplaybreaks

\pagestyle{myheadings}
\markright{\jobname}

\textwidth15cm
\textheight24cm
\hoffset=-1cm
\voffset=-2cm

\marginparwidth2.5cm

\thispagestyle{empty}
\pagestyle{headings}
\thispagestyle{empty}

\setlength{\parindent}{15pt}

\numberwithin{equation}{section}

\newtheorem{theorem}{Theorem}[section]
\newtheorem{lemma}[theorem]{Lemma}
\newtheorem{corollary}[theorem]{Corollary}
\newtheorem{proposition}[theorem]{Proposition} 

\theoremstyle{remark}
\newtheorem{remark}[theorem]{Remark}

\theoremstyle{definition}
\newtheorem{assumption}{Assumption}
\newtheorem{definition}[theorem]{Definition}

% number systems
\def\N{{\mathbb N}}

\def\R{{\mathbb R}}

\def\caX{{\mathcal X}}
% probability notations
\newcommand{\E}{{\mathbb E}}
\newcommand{\ERW}{{\mathbb E}^{\text{IRW}}}
\renewcommand{\P}{{\mathbb P}}
\newcommand{\Bin}{\text{\normalfont Bin}}
\newcommand{\Poi}{\text{\normalfont Poi}}
\newcommand{\NegBin}{\text{\normalfont NB}}

%Stefan's commands
\newcommand{\hyp}[5]{\mathstrut_{#1} F_{#2} \left( {\left. \genfrac{}{}{0pt}{} {#3 } { #4 }  \right\vert {#5}} \right)}

\newcommand{\set}[1]{{\left \{ #1 \right \}}}
\newcommand{\abs}[1]{{\left | #1 \right |}}
\newcommand{\one}{\mathbbm{1}}

\newcommand{\bra}[1]{\left( #1 \right) }
\newcommand{\sbra}[1]{\left[ #1 \right] }
\newcommand{\dx}{\:\mathrm{d}}
\newcommand{\dd}{\mathrm{d}}

\newcommand{\sd}{\, \cdot \,}

\makeatother

\begin{document}

\title{Intertwining and Duality for Consistent Markov Processes}

\author{Simone Floreani$^1$, Sabine Jansen$^2$, Frank Redig$^3$,\\ 
Stefan Wagner$^4$}

\date{\today}

\maketitle

\begin{abstract}
In this paper we derive intertwining relations for a broad class of conservative particle systems both in discrete and continuous setting. Using the language of point process theory, we are able to derive a natural framework in which duality and intertwining can be formulated.
We prove falling factorial and orthogonal polynomial intertwining relations in a general setting. These intertwinings unite the previously found ``classical’’ and orthogonal self-dualities in the context of discrete particle systems and provide new dualities for several interacting systems in the continuum.
We also introduce a new process, the symmetric inclusion process in the continuum, for which our general method applies and yields generalized Meixner polynomials as orthogonal self-intertwiners.

\medskip\noindent
\emph{Keywords:} Intertwiner, self-duality, orthogonal polynomials, consistency, point processes.

\medskip\noindent
\emph{MSC2020:} 
Primary: 
60J70, % Applications of Brownian motions and diffusion theory
60K35. % Interacting random processes; statistical mechanics type models; percolation theory 
Secondary: 82C22. %Interacting particle systems

\medskip\noindent
\emph{Acknowledgement:} S.F.\ acknowledges financial support from Netherlands Organisation for Scientific Research (NWO) through grant TOP1.17.019. S.J.\ and S.W.\ were supported under Germany's excellence strategy EXC-2111-390814868. S.J.\ and S.W.\ thank T.~Kuna and E.~Lytvynov for helpful discussions. F.R. thanks W.~Groenevelt for useful discussions and for pointing out the reference \cite{al1976convolutions}.
\end{abstract}

\tableofcontents
\bigskip

\footnoterule
\noindent
\hspace*{0.3cm} {\footnotesize $^{1)}$ 
Delft Institute of Applied Mathematics, TU Delft, Delft, The Netherlands,
{\sc s.floreani@tudelft.nl}}\\ 
\hspace*{0.3cm} {\footnotesize $^{2)}$ 
Mathematisches Institut, Ludwig-Maximilians-Universit\"{a}t, M\"{u}nchen, Germany,
{\sc jansen@math.lmu.de}}\\
\hspace*{0.3cm} {\footnotesize $^{3)}$ 
Delft Institute of Applied Mathematics, TU Delft, Delft, The Netherlands,
{\sc f.h.j.redig@tudelft.nl}\\
\hspace*{0.3cm} {\footnotesize $^{4)}$ 
	Mathematisches Institut, Ludwig-Maximilians-Universit\"{a}t, M\"{u}nchen, Germany,
	{\sc swagner@math.lmu.de}}}

\newpage

\section{Introduction}
\label{s.intro}

Duality and self-duality are important technical tools in the study of interacting particle systems and models of population dynamics.
E.g. in \cite{liggett_interacting_2005} self-duality is the key tool to analyze the ergodic properties of the symmetric exclusion process (SEP),
in \cite{kipnis1982heat} duality is the key tool to infer properties of a non-equilibrium steady state in the so-called KMP model of heat conduction and in \cite{DualitySpatiallyFlemingViot}, \cite{EthierKurtz} duality is used to study the long-time behaviour in stochastic models of population genetics.

Self-duality can be expressed as the property that the time evolution of well-chosen polynomials of degree $n$ boils down to the time evolution of $n$ (dual) particles. This is a significant simplification, because properties of a system of possibly infinitely many particles are reduced to properties of a finite number of particles. In its simplest setting (one particle duality in discrete systems), it is the property that the expected number of particles at a given location $x$ at time $t>0$ can be expressed in terms of the initial configuration and the location at time $t>0$ of {\em a single particle starting from $x$}.

In the setting of non-equilibrium systems, i.e., systems driven by boundary reservoirs where particles can enter and leave the system, duality with a system where the reservoirs are replaced by absorbing boundaries is a very useful and powerful tool
(see \cite{kipnis1982heat}, \cite{floreani2020orthogonal}, \cite{carinci2013duality}, \cite{derrida2001free}, \cite{frassek2021exact}, \cite{floreani2021switching}) because it expresses the $n$-point correlation functions in the non-equilibrium steady state (NESS) in terms of absorption probabilities of $n$ dual (i.e., absorbing) particles. By using one dual particle, one recovers easily the expected number of particles in the NESS, leading in the simplest setting of a linear chain to a linear density profile. For multiple dual particles, sometimes closed-form expressions can be inferred for their absorption probabilities, mostly when extra (integrability) properties are present. However, even if no closed-form expressions are available, many important properties such as local equilibrium and scaling properties of fluctuation fields  can still be inferred from  self-duality.

Duality and self-duality are very related properties. Indeed, for many systems such as independent random walkers (IRW) and the symmetric inclusion process (SIP), one can obtain duality by starting from self-duality and taking e.g.\ a many particle limit in the original particle number variables, while keeping the dual variables fixed.
This ``diffusion limit'' procedure yields e.g. dualities between independent random walkers and a deterministic system of coupled ordinary differential equations, as well as the duality between SIP and Brownian energy process (\cite{gkrv}).
Mostly dualities and self-dualities in the context of conservative interacting particle systems (such as the symmetric exclusion process) are studied on lattices, i.e., in a discrete setting where the particle configuration specifies at each site of the lattice the number of particles. The duality functions are then usually products over lattice sites of polynomials in the number of particles, depending on the number of dual particles (the number of dual particles corresponds to the degree of the polynomial).
The self-duality functions are usually categorized in ``classical'' duality functions, corresponding to (modified) factorial moments, and
``orthogonal'' self-duality functions which are products of orthogonal polynomials (where orthogonality is with respect to an underlying reversible product measure).
 
Self-duality with orthogonal polynomials is particularly useful in the study of fluctuation fields, the Boltzmann Gibbs principle, and cumulants in non-equilibrium steady states (\cite{DeMasiPresutti}, \cite{floreani2020orthogonal}, \cite{ayala2021higher}, \cite{ayala2018quantitative}). So far, for the classical discrete systems (SEP, SIP, IRW) orthogonal polynomial self-duality properties were obtained via various methods: the three term recurrence relations \cite{franceschini2019stochastic}, Lie algebra representation theory \cite{Groenevelt2019}, unitary symmetries \cite{carinci2019orthogonal}, and a direct relation between ``classical’’ (factorial moment) duality and orthogonal duality functions
\cite{floreani2020orthogonal} (i.e., an explicit form of Gram-Schmidt orthogonalization).

The language and formulation of duality in terms of occupation variables at discrete lattice sites clearly breaks down in many natural settings of e.g.\  particles moving in the continuum, such as interacting Brownian motions. 
Even in the simple context of independent Brownian motions, it is not immediately clear how to obtain self-duality. In particular, the naive approach of using the scaling limit of self-dualities of independent random walkers does not lead to useful results.
However, it is very natural to expect that all the classical discrete systems with self-duality properties have counterparts in the continuum. It is therefore important to develop a language in which the basic duality properties of discrete systems, including the orthogonal dualities, can be restated in such a way that they make sense in the continuum.
This is also important in view of studying scaling limits of systems which have self-duality properties such as the symmetric inclusion process in the condensation limit (where a system of sticky Brownian motions seems to appear), or the stochastic partial differential equations arising from the scaling limit of fluctuation fields. If these scaling limits share duality properties of their discrete approximants, this can be very useful in their analysis.

In \cite{carinci2021consistent}, the notion of consistency (see also \cite{kipnis1982heat}) was connected to self-duality, and in particular it was shown that for the three basic particle systems having self-duality (symmetric inclusion and exclusion process and independent random walkers), these dualities all can be derived from the same intertwining, which in turn is derived in a direct way from consistency. Consistency roughly means that the time evolution commutes with the operation of randomly selecting a given number of particles out of the system. Equivalently, up to permutations, it implies that in a system of $n$ particles, the $k$ particle evolution is coinciding with the evolution of $k$ particles out of these $n$ particles, i.e., the effect of the interactions with the other $n-k$ particles is ``wiped out’’. This is a remarkable property, trivially valid for independent particles, but also for interacting systems with special symmetries, such as the SEP and SIP. It is equivalent with a commutation property of the semigroup of the system with the ``lowering operator'' (an operator which describes essentially the removal of a random particle).

The consistency property clearly makes sense in a very general context, and appeared (under a slightly different form) in the literature on stochastic flows
\cite{LeJanRaimond}, \cite{schertzer_sun_swart_2016} including e.g.\ interacting Brownian motions, the Brownian web, and the Howitt-Warren flow. 
It also played a crucial role in the analysis of the KMP model (\cite{kipnis1982heat}).

Therefore, the consistency property seems the natural starting point for establishing dualities in a general context. 
The aim of our present paper is precisely this. We consider a very general setting of consistent Markovian evolutions on particle configurations and prove both ``classical'' (factorial moment) self-duality and ``orthogonal polynomial'' self-duality. More precisely we obtain a general ``factorial moment'' intertwining, and a general ``orthogonal polynomial'' intertwining. These general results first of all unify all the previous self-dualities of discrete conservative particle systems. Next, and more importantly,  they are valid in many new systems including independent and interacting diffusion processes, and a new process which we introduce here which is the natural continuum counterpart of the symmetric inclusion process.

Because we want to consider evolution of
configurations of particles, we are naturally led to the context of point processes (\cite{LastPenroseLecturesOnThePoissonProcess}). 
It turns out that in this language, the so-called classical self-dualities can be reformulated first as an intertwining, and they
relate very naturally to the so-called ``$n$-th factorial measures''. Therefore, the correct reformulation of ``classical self-duality'' in this general setting corresponds to intertwining with an intertwiner which is simply integration with respect to the ``$n$-th factorial measure''. It is then
a well-known fact (see e.g.\ Lemma 2.1 in Groenevelt \cite{Groenevelt2019}, in a discrete setup) that if an intertwiner can be written as a kernel operator in an $L^2(\mu)$ space where $\mu$ is a reversible measure, then
the corresponding kernel is a self-duality function. Therefore, the general ``$n$-th factorial measure'' intertwiner that we find indeed gives back all the known self-dualities in the discrete conservative particle systems.

Turning then to orthogonal polynomial self-duality, the situation becomes slightly more subtle in the general setting.
In \cite{franceschini2019stochastic} the authors remarked that the orthogonal self-duality polynomials can be obtained from Gram-Schmidt orthogonalization of the classical duality functions. In \cite{floreani2020orthogonal}, it is proved that this Gram-Schmidt orthogonalization in the context of discrete conservative particle systems,  is explicit and takes a rather simple form, from which it can be seen directly that it commutes with the semigroup of the particle evolution. This in turn implies that the Gram-Schmidt orthogonalization of the classical self-duality functions indeed produces a new self-duality function because a symmetry (i.e.\ an operator commuting with the semigroup) acting on a self-duality function produces a new self-duality function.
The simple form of the Gram-Schmidt orthogonalization in \cite{floreani2020orthogonal} does not generalize easily to the setting of point processes which we consider here. Therefore, how to obtain orthogonal intertwining from the factorial measure intertwiner remained an open problem. We solve this problem in a direct and simple way, by showing that from the factorial measure intertwiner combined with reversibility, one obtains that the Gram-Schmidt orthogonalization commutes with the semigroup. Therefore, one obtain in full generality a ``orthogonal factorial moment'' intertwiner. This intertwiner, when specified to discrete conservative particle systems, unites all orthogonal polynomial self-dualities.
Moreover, in the context of systems evolving in the continuum, it provides many new orthogonal self-dualities. In particular, in the continuum inclusion process introduced here, we find that our general orthogonal intertwiner when applied to specific functions
(tensor products of indicators of disjoint sets) becomes a product of Meixner polynomials in the number of particles.
In the context of independent particles, our general orthogonal intertwiner when applied to these specific functions gives the product of Charlier polynomials.

To summarize: the main two results of our paper (the $n$-th factorial measure intertwining and the corresponding orthogonal intertwiner)
firstly 
unify the previous `classical'' and ``orthogonal'' self-duality results in discrete setting and 
and secondly provide a much more general context in which self-duality results can be obtained an applied.

The rest of our paper is organized as follows.
In Section \ref{section: IRW} we revisit self-duality for independent
random walkers and link it to factorial moment measures of point processes.
This allows us to rewrite the self-duality relation in such a way that it makes sense for independent Markov processes on general state spaces, provided
a symmetry condition is fulfilled. In Section \ref{section: self-duality} we introduce the general setting and the class of Markov processes under consideration. We then state the two main theorems, the two self-intertwining results where the intertwiners are respectively, generalized falling factorial and orthogonal polynomials. In Section \ref{section: examples} we list some examples of known processes which satisfy the assumptions of our main theorems. In particular we show how the known self-duality relations for exclusion and inclusion process follow from our general results. In Section \ref{section: gSIP} we introduce and study a continuum version of the inclusion process. In particular we  identify  its reversible distribution, we show that it satisfies the assumptions of the two intertwiner results, and finally we exhibit the relation between the generalized orthogonal polynomials and the Meixner polynomials. Finally, in Appendix \ref{appendix: properties ort pol} we provide the proof of some properties of the generalized orthogonal polynomials introduced in Section \ref{section: Self-intertwining and orthogonality relations}

\section{Self-Duality for Independent Random Walkers on a Finite Set}
\label{section: IRW}
The section is mainly intended for the reader familiar with the language of interacting particle systems and ``interpolates'' between the usual notations of that context and the point process notations. The reader who is familiar with point process notation and wants to know the main results without the  interacting particle systems notational context can skip this section and pass to section \ref{section: self-duality}.

We start by considering a system of independent random walks on a finite set, for which
duality and self-duality properties  are well-known (see, e.g., \cite{gkrv, DeMasiPresutti}). The aim of the section is to revisit these duality results in the language of labelled particles. This will provide us with a notational framework in which these known duality relations are cast in a language that makes sense in a much more general setting, namely  independent Markov processes on a general state space. In this way, the reader is prepared in a convenient and easy case to the general framework that we build in Section \ref{section: self-duality}. 

Let $E$ be a finite set and $(\eta_t)_{t\ge 0}$, $\eta_t=(\eta_t(x))_{x\in E}$, be the Markov process on $\N_0^E$ generated by
\[
L f(\eta) = \sum_{x,y\in E} \eta(x) c(x,y) (f(\eta-\delta_x+\delta_y)-f(\eta)) 
\]
for $f:E\to \R$, $c:E\times E\to \R_+$ a symmetric function ($c(x,x)=0$ for any $x\in E$ without loss of generality) and where $\delta_x$ denotes the configuration with a single particle at $x$ and no particles at other locations. That is, $\delta_x$ is the configuration $\eta\in \N_0^E$ given by $\eta(y) = \delta_{x,y}$. Then $(\eta_t)_{t\ge 0}$ is called the configuration process with $\eta_t(x)$ denoting the numbers of particles at time $t\ge 0$ in $x\in E$. 
We denote by $p_t(x,y)$ the  transition probability of a single random walk, which is a symmetric function due to the symmetry of the rates $c:E\times E\to \R_+$.

Let $\xi\in \N_0^E$, we then define, the polynomials
\begin{equation}\label{dualitypol}
	D(\xi, \eta):= \prod_{x\in E} \frac{\eta(x)!}{(\eta(x)-\xi(x))!}\, \one_{\{\xi(x)\leq \eta(x)\}},\quad \eta\in \N_0^E.   
\end{equation}
We refer to \eqref{dualitypol} as the \textit{classical self-duality functions} and to $\xi$ as the dual configuration.  
The self-duality relation for the system of independent walkers then reads as follows
\begin{equation}
    \label{equation: selfdual}
    \E_\eta \left(D(\xi, \eta_t)\right)= \E_\xi \left(D(\xi_t, \eta)\right)
\end{equation}
for all $\eta,\xi\in \N_0^E$ and $t\ge 0$, where $\E_\eta$ denotes the expectation in the configuration process started from $\eta$. Notice here that we have restricted to the case of finite $E$ for convenience but \eqref{equation: selfdual} can be extended to countable $E$, a suitable set of allowed starting configurations $\eta\in \N_0^E$, and finite dual configurations $\xi$. In Section \ref{section: self dual 1 dual part IRW}, by a change of notation, we reformulate the relation \eqref{equation: selfdual} with one dual particle (i.e., $\xi=\delta_x$) in such a way that it is meaningful in contexts more general than random walks on a finite set, namely also in the continuum. Thus we get rid of the configuration process notation. In Section \ref{section: n dual part IRW} we proceed by reformulating \eqref{equation: selfdual} in the general case with $n$ dual particles and finally, in Section \ref{section: orthogonal self-duality RW}, we recall and reformulate the so-called orthogonal self-dualities for independent random walks.

\subsection{The Labeled Configuration Notation and the Associated Point Configuration: Self-Duality with One Dual Particle}
\label{section: self dual 1 dual part IRW}

Let $\mathcal X:= (\mathcal X_0(1),\ldots, \mathcal X_0(N))$  be an arbitrary labelling of the initial positions of the particles which are in total $N<\infty$.  We then denote $\mathcal X_t$ the positions of these particles at time $t\geq 0$, with $\mathcal X_t(i)$ the position of the $i$-th particle at time $t\geq 0$. 
The correspondence between the labeled system $(\mathcal X_t)_{t\ge 0}$ and the previously introduced configuration process $(\eta_t)_{t\ge 0}$ is given by $\eta_t(x)= \sum_{i=1}^N \one_{\{\mathcal X_t(i)=x\}}$.

We describe the system also via the point configuration $\sum_{i=1}^N \delta_{\mathcal X_t(i)}$. Notice that in this discrete setting, this is simply a change of notation for the configuration: indeed, for $x\in E$, we have $\left(  \sum_{i=1}^N \delta_{\mathcal X_t(i)}\right)(\{x\})= \eta_t(x)$. In view of the generalization of self-duality in the next sections, from now on, we identify $\eta_t$ with the point configuration
$$\eta_t
= \sum_{i=1}^N \delta_{\mathcal X_t(i)}.$$
This is the same as identifying a measure $\eta$ on the finite $E$ with the vector 
$\eta(\{x\}), x\in E$. The advantage of this change of notation is that it generalizes to arbitrary measurable state spaces $E$, and
it also allows to produce a simple but insightful proof of the self-duality \eqref{equation: selfdual}.

Let us start with self-duality with a single dual particle, i.e.~\eqref{dualitypol} with dual configuration $\xi=\delta_x$, which reads as

\begin{equation*}
    \E_\eta (\eta_t(\{x\}))=\ERW_x\left( \eta_0({\{Y_t\}})\right)=\sum_{y\in E} p_t(x,y)\eta(\{y\}),
\end{equation*}
where $\ERW_x$ denotes the expectation with respect to random walk with transition rates $c(x,y)$ starting at $x\in E$. 

Let us denote by $\E_{\mathcal{X}} (\eta_t)$ the measure defined as $\E_{\mathcal X}\bra{\eta_t}(A) := \E_{\mathcal X} \bra{\eta_t(A)}$ for $A \subset E$, where $\E_{\mathcal X}$ denotes the expectation when starting $\mathcal X_t(1), \ldots, \mathcal X_t (N)$ at $\mathcal X$.  We then have 
\begin{equation}\label{bokno}
    \E_{\mathcal X}\left( \eta_t\right)= \E_{\mathcal X}\left(\sum_{i=1}^N \delta_{\caX_t(i)}\right)= \sum_{i=1}^N\E_{\caX}( \delta_{\caX_t(i)})
    = \sum_{i=1}^N\ERW_{\caX_0(i)} (\delta_{\caX_t(i)})
\end{equation}
where in the third equality in \eqref{bokno} we used that the particles are independent, i.e., the distribution of the position of the $i$-th particle is only depending on its initial position $\mathcal X_0(i)$ and not on the other particles. Using that $\ERW_{\caX_0(i)} (\delta_{\caX_t(i)})=\sum_{y\in E}p_t(\caX_0(i),y) \delta_y$, we can rewrite \eqref{bokno} as
\begin{align*}
    \E_{\mathcal X}\left( \eta_t\right)
    &= \sum_{i=1}^N\sum_{y\in E}p_t(\caX_0(i),y) \delta_y=\sum_{y\in E} \delta_y  \sum_{i} p_t(y,\caX_0(i)) 
    =\sum_{y\in E}  \left(\int p_t(y,z)\eta_0(\dd z) \right)  \delta_y
\end{align*}
where in the fourth equality we used the symmetry of the transition probabilities $p_t(x,y)$. If we denote by $\lambda(\dd y)$ the counting measure on $E$ we obtain
\begin{equation}
    \label{eq: self-duality one dual particle IRW}
    \left(\E_{\caX} \left(\eta_t\right)\right)(\dd y)=  \left(\int p_t(y,z)\eta_0(\dd z) \right) \lambda(\dd y).
\end{equation}
The above reformulation of the self-duality relation with one dual particle now makes sense on general measurable state spaces $E$.

\subsection{Reformulation of Self-Duality with \texorpdfstring{$n$}{n} Dual Particles}
\label{section: n dual part IRW}
As a next step we want to generalize \eqref{eq: self-duality one dual particle IRW}
to the case of  $n$ dual particles. In order to do so, given a point configuration $\eta=\sum_{i=1}^N\delta_{x_i}$, we introduce the $n$-th factorial measure of $\eta$ (see, e.g., \cite[Eq.~(4.5)]{LastPenroseLecturesOnThePoissonProcess}), which is given by
\begin{equation}
    \label{eq: falling fact meas}
    \eta^{(n)} = \sideset{}{^{\neq}}\sum_{1 \leq i_1, \ldots, i_n \leq N} \delta_{(x_{i_1}, \ldots, x_{i_n})}.
\end{equation}
Using the  notation adopted in \cite{LastPenroseLecturesOnThePoissonProcess}, the superscript $\neq$ indicates summation over $n$-tuples with pairwise different entries and where an empty sum is defined as zero. The reason why the measure in \eqref{eq: falling fact meas} is called falling factorial is clearly explained by the elementary combinatorial lemma below, where the relation with the classical dualities defined in \eqref{dualitypol} (consisting of products of falling factorial polynomials) is  given. We leave the simple proof to the reader.

\begin{lemma}
    Let $\eta=\sum_{i=1}^N\delta_{x_i}$. Then, for all $(y_1,\ldots,y_n)\in E^n$,  we have
    \begin{equation}\label{simpi}
        \eta^{(n)}(\{(y_1,\ldots,y_n)\})=
        D\left(\sum_{k=1}^n\delta_{y_k}, \eta\right),
    \end{equation}
    where $D(\sd,\sd)$ is the self-duality polynomial function given in \eqref{dualitypol}. As a consequence, the $n$-th factorial measure  can be rewritten as follows
    \begin{equation}
        \label{combid}
        \eta^{(n)} = \sum_{y_1, \ldots, y_n\in E} \delta_{(y_1,\ldots, y_n)} D\left(\sum_{k=1}^n\delta_{y_k}, \eta\right).
    \end{equation}
\end{lemma}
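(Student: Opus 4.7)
The plan is to evaluate $\eta^{(n)}(\{(y_1,\ldots,y_n)\})$ directly from the definition \eqref{eq: falling fact meas} and recognize the answer as a product of falling factorials indexed by $E$. By definition,
\[
\eta^{(n)}(\{(y_1,\ldots,y_n)\}) = \#\set{(i_1,\ldots,i_n) : 1\le i_1,\ldots,i_n\le N\text{ pairwise distinct},\ x_{i_k}=y_k\ \forall k}.
\]
So I just need to count $n$-tuples of distinct labels that match the prescribed target positions.

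The next step is to split this count by site. For each $y\in E$, let $K_y := \set{k\in\{1,\ldots,n\} : y_k=y}$ and set $m_y := |K_y|$, which is precisely the multiplicity $(\sum_{k=1}^n \delta_{y_k})(\{y\})$. Similarly let $I_y := \set{i\in\{1,\ldots,N\} : x_i=y\}$, so $|I_y|=\eta(\{y\})$. Choosing distinct labels $(i_1,\ldots,i_n)$ with $x_{i_k}=y_k$ amounts to independently choosing, for each $y\in E$, an injection $K_y\hookrightarrow I_y$. The number of such injections is $\eta(\{y\})(\eta(\{y\})-1)\cdots(\eta(\{y\})-m_y+1)$ when $m_y\le \eta(\{y\})$ and zero otherwise; equivalently,
\[
\frac{\eta(\{y\})!}{(\eta(\{y\})-m_y)!}\,\one_{\set{m_y\le\eta(\{y\})}}.
\]
Multiplying over $y\in E$ (noting that for $y\notin\{y_1,\ldots,y_n\}$ the factor equals $1$) yields exactly $D(\sum_{k=1}^n\delta_{y_k},\eta)$, which proves \eqref{simpi}.

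For the consequence \eqref{combid}, I would use that $\eta^{(n)}$ is by construction a finite sum of Dirac masses at points in $E^n$, and since $E$ is finite any such measure decomposes as
\[
\eta^{(n)} = \sum_{(y_1,\ldots,y_n)\in E^n} \eta^{(n)}(\{(y_1,\ldots,y_n)\})\,\delta_{(y_1,\ldots,y_n)}.
\]
Substituting \eqref{simpi} into each coefficient gives \eqref{combid}.

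There is no real obstacle here beyond bookkeeping; the only thing to be careful about is the observation that the joint injectivity of $(i_1,\ldots,i_n)$ factorises cleanly as the collection of injectivities $K_y\hookrightarrow I_y$, which holds because the $I_y$ are disjoint as $y$ ranges over $E$, so two indices in different $I_y$'s are automatically distinct.
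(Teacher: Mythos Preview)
Your argument is correct. The paper does not actually give a proof of this lemma---it writes ``We leave the simple proof to the reader''---so there is nothing to compare against; your counting via the factorisation into independent injections $K_y\hookrightarrow I_y$ is exactly the natural way to fill in the details.
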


We can then generalize \eqref{eq: self-duality one dual particle IRW} to the expectation of the $n$-th factorial measure measure $\eta^{(n)}_t$ of  the point configuration valued process $\eta_t=\sum_{i} \delta_{\mathcal X_t(i)}$ introduced above. 

\begin{proposition}
    \label{gensdt}
	Let $\lambda$ be the counting measure on $E$. Then, for all $t>0$ and $n\in \N$,
	\begin{equation}
	    \label{vivi}
		\E_{\caX} (\eta^{(n)}_t)(\dd(y_1 \ldots y_n))= \left(\int_{E^n} \prod_{i=1}^n p_t(y_i, z_i) \eta_0^{(n)}( \dd(z_1, \ldots z_n))\right) \lambda^{\otimes n}(\dd(y_1, \ldots, y_n))\\
	\end{equation}
\end{proposition}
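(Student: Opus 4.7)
The plan is to imitate the argument that established \eqref{eq: self-duality one dual particle IRW} in the one-dual-particle case, but carried out on the product space $E^n$ and using the tensor-product structure that independence affords when the indices are pairwise distinct. The whole reason the factorial measure (rather than the ordinary product $\eta_t \otimes \cdots \otimes \eta_t$) is the right object is precisely that the diagonal is excluded, so every tuple appearing in the sum involves $n$ \emph{genuinely} independent particle trajectories; I expect this to be the only subtle point.

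First I would unfold the definition and use linearity of expectation:
\begin{equation*}
\E_{\caX}\bigl(\eta_t^{(n)}\bigr)
= \sideset{}{^{\neq}}\sum_{1\le i_1,\ldots,i_n\le N} \E_{\caX}\bigl(\delta_{(\caX_t(i_1),\ldots,\caX_t(i_n))}\bigr).
\end{equation*}
Because $i_1,\ldots,i_n$ are pairwise distinct and the labelled particles are independent, the law of the random vector $(\caX_t(i_1),\ldots,\caX_t(i_n))$ on $E^n$ is the product of the marginal laws, so
\begin{equation*}
\E_{\caX}\bigl(\delta_{(\caX_t(i_1),\ldots,\caX_t(i_n))}\bigr)
= \bigotimes_{k=1}^n \ERW_{\caX_0(i_k)}(\delta_{\caX_t(i_k)})
= \bigotimes_{k=1}^n \sum_{y_k\in E} p_t(\caX_0(i_k),y_k)\,\delta_{y_k}.
\end{equation*}

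Next I would interchange the finite sums over $i_1,\ldots,i_n$ and over $y_1,\ldots,y_n$, pulling out the Dirac masses on $E^n$:
\begin{equation*}
\E_{\caX}\bigl(\eta_t^{(n)}\bigr)
= \sum_{y_1,\ldots,y_n\in E} \delta_{(y_1,\ldots,y_n)} \sideset{}{^{\neq}}\sum_{1\le i_1,\ldots,i_n\le N} \prod_{k=1}^n p_t(\caX_0(i_k),y_k).
\end{equation*}
Using the symmetry $p_t(x,y)=p_t(y,x)$, the product becomes $\prod_k p_t(y_k,\caX_0(i_k))$; and by the very definition \eqref{eq: falling fact meas} of the factorial measure applied to $\eta_0=\sum_i \delta_{\caX_0(i)}$, the remaining sum over distinct tuples is exactly
\begin{equation*}
\sideset{}{^{\neq}}\sum_{i_1,\ldots,i_n} \prod_{k=1}^n p_t(y_k,\caX_0(i_k))
= \int_{E^n} \prod_{k=1}^n p_t(y_k,z_k)\, \eta_0^{(n)}\bigl(\dd(z_1,\ldots,z_n)\bigr).
\end{equation*}

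Finally, rewriting the outer sum $\sum_{y_1,\ldots,y_n\in E} \delta_{(y_1,\ldots,y_n)}(\cdot)$ as integration against the product counting measure $\lambda^{\otimes n}$ on $E^n$ gives exactly \eqref{vivi}. The only nontrivial ingredient is step two, where independence of distinct labelled particles produces the tensor product — and this is precisely what fails on the diagonal and motivates the use of the factorial measure; the symmetry of $p_t$ then does the rest, turning an expression that naturally involves $p_t(\caX_0(i_k),y_k)$ into one that integrates against $\eta_0^{(n)}$ in the variables $z_k$.
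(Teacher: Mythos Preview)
Your proof is correct and follows essentially the same approach as the paper's own proof: expand the factorial measure as a sum over distinct tuples, use independence of the labelled particles to factorize the expectation, apply the symmetry of $p_t$, and reassemble the sum over distinct indices as an integral against $\eta_0^{(n)}$. The only cosmetic difference is that the paper integrates both sides against an arbitrary test function $f:E^n\to\R$ and concludes by letting $f$ vary, whereas you work directly at the level of measures; the logical content is identical.
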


\begin{proof}
    Let $f:E^n\to\R$. We then have 
    \begin{align}
        &\label{eq: rewriting fall fact}\E_{\caX}\left(\int f(y_1, \ldots, y_n) \eta^{(n)}_t(\dd(y_1 \ldots y_n))\right)
        =
        \sideset{}{^{\neq}}\sum_{1 \leq i_1, \ldots, i_n \leq N}\E_{\caX} f(\caX_{t}(i_1), \ldots, \caX_{t}(i_n))
        \\
        &=\nonumber
        \sideset{}{^{\neq}}\sum_{1 \leq i_1, \ldots, i_n \leq N}
        \int f(y_1, \ldots, y_n) \prod_{k=1}^n p_t (\caX_{0}(i_k), y_k) \prod_{k=1}^n \lambda(\dd y_k)
        \nonumber\\
        &=\nonumber
        \sideset{}{^{\neq}}\sum_{1 \leq i_1, \ldots, i_n \leq N}
        \int f(y_1, \ldots, y_n) \prod_{k=1}^n p_t (y_k, \caX_{0}(i_k)) \prod_{k=1}^n \lambda(\dd y_k)
        \nonumber\\
        &=\nonumber
        \int f(y_1, \ldots, y_n) \left(\int\prod_{k=1}^n p_t (y_k, z_k)\eta_0^{(n)}(\dd(z_1\ldots z_n))\right) \prod_{k=1}^n \lambda(\dd y_k).
    \end{align}
    where we used \eqref{eq: falling fact meas} in the first and the last equality, the independence of the particles in the second equality and the of the transition probabilities in the third equality.
    Because $f$ is arbitrary, this proves \eqref{vivi}
\end{proof}

\begin{remark}
    \begin{enumerate}[\normalfont(i)]
        \item Equation~\eqref{vivi} holds for each system of independent reversible random walks where the reversible measure  $\lambda_{\rm{rev}}$ for the single random walk  is used in place of the counting measure $\lambda$. 
        \item Without assuming the symmetry of the rates $c:E\times E\to \R$, from \eqref{eq: rewriting fall fact} and the independence of the particles, we still have the relation 
        \begin{equation}
            \label{eq: intert sec 2}
            \E_{\caX}\left(\int_{E^n} f \dd \eta^{(n)}_t   \right)=\int_{E^n} \E_{y_1,\ldots,y_n}^{\rm{IRW}}\left(f(Y_t(1),\ldots,Y_t(n))  \right)  \eta_0^{(n)}(\dd(y_1\ldots,y_n)),
        \end{equation} 
        where $f:E^n\to\R$ is a permutation invariant function and $\ERW_{y_1, \ldots,y_n}$ denotes expectation with respect to $n$ independent random walkers initially starting from
        $(y_1, \ldots, y_n)$. Equation~\eqref{eq: intert sec 2} has to be read as a self-intertwining relation and it will be generalized in Section \ref{section: generalized falling factorial polynomials}.
        \item[iii)] For any $(y_1,\ldots,y_n)\in E^n$, \eqref{vivi} implies
        $$ \E_{\mathcal X}(\eta_t^{(n)}(\{(y_1,\ldots,y_n)\} )=\ERW_{y_1, \ldots,y_n}\left(\eta^{(n)}_0(\{(Y_t(1),\ldots,Y_t(n))\})\right)$$
        which, in view of \eqref{simpi}, reads as 
        $$\E_\caX \left(D\left(\sum_{k=1}^n\delta_{y_k}, \eta_t\right)\right)=\ERW_{y_1, \ldots,y_n}\left(D\left(\sum_{k=1}^n\delta_{Y_t(i)}, \eta\right)\right) ,$$
        which is precisely the \textit{classical self-duality} relation given in  \eqref{equation: selfdual}.
    \end{enumerate}
\end{remark}

\subsection{Orthogonal Self-Duality}

\label{section: orthogonal self-duality RW}

In this section we turn to orthogonal self-dualities for random walks in a finite set. In \cite{redig_factorized_2018}, \cite{franceschini2019stochastic} and \cite{floreani2020orthogonal} it has been shown (using, respectively, generating functions method, three term recurrence relations and algebraic methods) that, for all $\theta> 0$, the following self-duality relation holds
\begin{equation}
    \label{oselfdual}
    \E_\eta(D_\theta(\xi,\eta_t))=\E_\xi(D_\theta(\xi_t,\eta))
\end{equation}
with respect to the self-duality functions
\begin{align}
    \label{eq: ortduality}
    D_\theta(\xi, \eta)= \prod_{x\in S}d^{\rm{or}}_{\xi(\{x\})}(\eta(\{x\});\theta).
\end{align}
$\{ d^{\rm{or}}_n(\sd; \theta)\}_{n\in\N}$ are the Charlier polynomials, i.e.\ the polynomials satisfying  the following orthogonality relation
$$\int  d^{\rm{or}}_n(\eta(\{x\}); \theta)  d^{\rm{or}}_m(\eta(\{x\}); \theta)  \rho_{\theta}(\dd \eta)= \one_{\{n=m\}} \frac{n!}{\theta^n}$$
with $\rho_\theta=\otimes_{x\in E} \rho_{x, \theta}$ and $\rho_{x, \theta}=\Poi(\theta)$ for each $x\in S$. We refer to the functions in \eqref{eq: ortduality} as \textit{orthogonal self-dualities}. Let $[n]:=\{1,\ldots,n\}$ In this setting, the relation between orthogonal and classical dualities is simple and  given by (see \cite[Remark 4.2]{floreani2020orthogonal})

\begin{align}
    \label{eq: ort dual I}
    D_\theta(\xi, \eta)=\sum_{\xi'\le\xi}(-\theta)^{|\xi|-|\xi'|}\binom{\xi}{\xi'} D(\xi',\eta) =\sum_{I\subset [n]}(-\theta)^{n-|I|}D\bra{\sum_{i\in I}\delta_{y_i},\eta}
\end{align}
from which it follows that \eqref{oselfdual} is a direct consequence of  \eqref{equation: selfdual} and the independence of the particles. 
We can now reformulate the self-duality relation \eqref{oselfdual} in terms of a point configuration notation. First we introduce the orthogonalized version of the falling factorial measure associated to a point configuration $\eta=\sum_{i=1}^N\delta_{x_i}$, namely
\begin{equation}
    \eta^{(n),\theta}(\dd (x_1, \ldots, x_n)):=\sum_{r=0}^n (-\theta)^{n-r} \sum_{I\subset [n]: |I|=r}\eta^{(r)}(\dd (x_1, \ldots, x_n)_I)\otimes\lambda ^{\otimes (n-r)}(\dd (x_1, \ldots, x_n)_{[n]\setminus I}),
\end{equation}
where $\lambda$ denotes the counting measure,
$\int f_0 \dx \eta^{(0)} := f_0$ for all $f_0 \in \R$ and  $(x_1, \ldots, x_n)_I$ denotes the subvector of $(x_1, \ldots, x_n)$ with components in $I\subset [n]$. The relation between $\eta^{(n),\theta}$ and the orthogonal self-dualities is expressed in the following result.

\begin{lemma}
    Let $\eta=\sum_{i=1}^N\delta_{x_i}$. Then, for all $(y_1,\ldots,y_n)\in E^n$,  we have
    \begin{equation}\label{simpi2}
        \eta^{(n),\theta}(\{ (y_1,\ldots,y_n) \})=D_\theta\bra{\sum_{i=1}^n \delta_{y_i},\eta}
    \end{equation}
    where $D_\theta(\sd,\sd)$ is the orthogonal self-duality given in \eqref{eq: ort dual I}. As a consequence
    \begin{equation*}
        \eta_t^{(n),\theta}=\sum_{y_1,\ldots,y_n\in E}D_\theta\bra{\sum_{i=1}^n \delta_{y_i},\eta}\delta_{(y_1,\ldots,y_n)}.
    \end{equation*}
\end{lemma}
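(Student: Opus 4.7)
The plan is to unfold the definition of $\eta^{(n),\theta}$ evaluated at the singleton $\{(y_1,\ldots,y_n)\}$ and match it term by term with the expansion \eqref{eq: ort dual I} of $D_\theta$ in terms of classical duality functions. There is no genuine obstacle here: the statement is essentially a bookkeeping identity once one uses Lemma on the $n$-th factorial measure (equation \eqref{simpi}) and the fact that $\lambda$ is the counting measure on $E$.

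First I would write out
\begin{equation*}
\eta^{(n),\theta}(\{(y_1,\ldots,y_n)\})
=\sum_{r=0}^{n}(-\theta)^{n-r}\sum_{\substack{I\subset [n]\\ |I|=r}}\eta^{(r)}\bigl(\{(y_i)_{i\in I}\}\bigr)\cdot\lambda^{\otimes(n-r)}\bigl(\{(y_j)_{j\in [n]\setminus I}\}\bigr),
\end{equation*}
using that the product measure of singletons in the two components of $E^I\times E^{[n]\setminus I}$ factorises on the singleton $\{(y_1,\ldots,y_n)\}$. Since $\lambda$ is counting measure, each factor $\lambda^{\otimes(n-r)}(\{(y_j)_{j\in [n]\setminus I}\})$ equals $1$, so the right-hand side collapses to
\begin{equation*}
\sum_{r=0}^{n}(-\theta)^{n-r}\sum_{\substack{I\subset [n]\\ |I|=r}}\eta^{(r)}\bigl(\{(y_i)_{i\in I}\}\bigr)=\sum_{I\subset [n]}(-\theta)^{n-|I|}\,\eta^{(|I|)}\bigl(\{(y_i)_{i\in I}\}\bigr).
\end{equation*}

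Next I would apply the previously established identity \eqref{simpi}, which gives $\eta^{(|I|)}(\{(y_i)_{i\in I}\})=D\bigl(\sum_{i\in I}\delta_{y_i},\eta\bigr)$. Substituting yields
\begin{equation*}
\eta^{(n),\theta}(\{(y_1,\ldots,y_n)\})=\sum_{I\subset [n]}(-\theta)^{n-|I|}\,D\Bigl(\sum_{i\in I}\delta_{y_i},\eta\Bigr),
\end{equation*}
which is precisely the second expression for $D_\theta\bigl(\sum_{i=1}^n\delta_{y_i},\eta\bigr)$ given in \eqref{eq: ort dual I}. This establishes \eqref{simpi2}.

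For the consequence, since $\eta=\sum_{i=1}^N\delta_{x_i}$ is a finite point configuration, $\eta^{(n),\theta}$ is a signed measure supported on the finite set $\{x_1,\ldots,x_N\}^n\cup E^n$, and more generally it is an atomic measure on $E^n$ (being a linear combination of factorial measures and counting measures). Hence we may write $\eta^{(n),\theta}=\sum_{(y_1,\ldots,y_n)\in E^n}\eta^{(n),\theta}(\{(y_1,\ldots,y_n)\})\,\delta_{(y_1,\ldots,y_n)}$, and substituting \eqref{simpi2} yields the claimed representation. This completes the proof.
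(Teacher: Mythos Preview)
Your proof is correct and follows essentially the same approach as the paper's: both unfold the definition of $\eta^{(n),\theta}$ at a singleton, use that $\lambda$ is counting measure so the $\lambda^{\otimes(n-r)}$-factor equals $1$, invoke \eqref{simpi} to rewrite $\eta^{(r)}$ at a singleton as a classical duality function, and then match with \eqref{eq: ort dual I}. The only cosmetic issue is the remark about the support in the final paragraph (the union $\{x_1,\ldots,x_N\}^n\cup E^n$ is just $E^n$, and since $E$ is finite here every measure on $E^n$ is trivially atomic), but this does not affect the argument.
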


\begin{proof}
    For $I\subset [n]$ with $|I|=r$, we have, using \eqref{simpi},
    \begin{align*}
        & D\bra{\sum_{i\in I} y_i, \eta} = \eta^{(r)}((y_1,\ldots,y_n)_I)=\int \one_{(y_1,\ldots,y_n)_I}(x_1,\ldots,x_r) \eta^{(r)}(\dd (x_1,\ldots,x_r))\\
        &=  \int \one_{(y_1,\ldots,y_n)}(x_1,\ldots,x_n) \eta^{(r)}(\dd(x_1,\ldots,x_n)_I) \otimes\lambda ^{ \otimes (n-r)}(\dd (x_1,\ldots,x_n)_{[n]\setminus I}).
    \end{align*}
    Therefore, \eqref{simpi2} follows from \eqref{eq: ort dual I}.
\end{proof}

We then state the analogue of Proposition~\ref{gensdt} for $\eta^{(n), \theta}$ in a notation which makes sense in the context of general measurable state space $E$. The result follows from \eqref{vivi} combined with the definition of $\eta^{(n), \theta}$ and the reversibility of $\lambda$ for the single random walk: we omit here the simple proof  and we refer to Section \ref{section: Self-intertwining and orthogonality relations} for the proof of the self-intertwining formulation of this result in a much more general setting.

\begin{proposition}
    For all $t>0$ and $n\in\N$
    \begin{equation}
        \label{eq: duality ort pol general}
        \E_{\mathcal X}(\eta^{(n),\theta}_t) (\dd(y_1, \ldots, y_n)) = \left(\int_{E^n} \prod_{i=1}^np_t(y_i,x_i)\eta^{(n),\theta}_0(\dd (x_1,\ldots, x_n)) \right) \lambda^{\otimes n}(\dd(y_1, \ldots, y_n)).
    \end{equation}
\end{proposition}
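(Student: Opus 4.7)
My strategy is to expand both sides using the definition of $\eta^{(n),\theta}$ as a signed combination of the ``factorial--counting'' measures $\eta^{(r)}\otimes\lambda^{\otimes(n-r)}$, apply Proposition~\ref{gensdt} termwise, and match the two expansions.

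First, by linearity of the expectation (the $\lambda^{\otimes(n-r)}$ factors are deterministic), the definition of $\eta^{(n),\theta}$ gives
\begin{equation*}
\E_{\caX}\bra{\eta_t^{(n),\theta}}\bra{\dd (y_1,\ldots,y_n)} = \sum_{r=0}^n (-\theta)^{n-r}\!\!\sum_{\substack{I\subset[n]\\ \abs{I}=r}} \E_{\caX}\bra{\eta_t^{(r)}}\bra{\dd (y_1,\ldots,y_n)_I} \otimes \lambda^{\otimes(n-r)}\bra{\dd (y_1,\ldots,y_n)_{[n]\setminus I}}.
\end{equation*}
The next step is to apply Proposition~\ref{gensdt} (relation~\eqref{vivi}) to each $\E_{\caX}(\eta_t^{(r)})$, rewriting it as $\bigl(\int \prod_{i\in I} p_t(y_i,z_i)\,\eta_0^{(r)}(\dd (z_i)_{i\in I})\bigr)\lambda^{\otimes r}(\dd (y)_I)$. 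Combining $\lambda^{\otimes r}\otimes\lambda^{\otimes(n-r)}=\lambda^{\otimes n}$ brings the left-hand side into the form ``density $\times\,\lambda^{\otimes n}(\dd y)$'', with density equal to the signed double sum
\begin{equation*}
S(y) := \sum_{r=0}^n(-\theta)^{n-r}\sum_{\abs{I}=r}\int_{E^r} \prod_{i\in I}p_t(y_i,z_i)\,\eta_0^{(r)}\bra{\dd (z_i)_{i\in I}}.
\end{equation*}

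For the right-hand side of the claim, I would substitute the definition of $\eta_0^{(n),\theta}$, split the product $\prod_{i=1}^n p_t(y_i,x_i)$ over the two blocks $I$ and $[n]\setminus I$, and use Fubini. The factors with $i\notin I$ are integrated against $\lambda$ through $\int_E p_t(y_i,x_i)\,\lambda(\dd x_i) = 1$, which is just conservation of total probability for the random walk. What remains coincides exactly with $S(y)$, establishing the claimed equality of measures.

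The main obstacle is purely notational: keeping precise track of which coordinates are indexed by $I$ and which by $[n]\setminus I$ on both sides. The analytical content is entirely absorbed in Proposition~\ref{gensdt} (whose proof uses the symmetry $p_t(x,y)=p_t(y,x)$, i.e.\ reversibility of $\lambda$) together with the trivial fact that $p_t(y,\sd)$ integrates to one against $\lambda$.
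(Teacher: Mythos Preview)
Your proof is correct and follows exactly the approach the paper sketches (the paper omits the details, saying only that the result follows from~\eqref{vivi}, the definition of $\eta^{(n),\theta}$, and the reversibility of $\lambda$). You have simply written out the computation in full, correctly identifying that reversibility enters through Proposition~\ref{gensdt} and that the extra $\lambda$-integrals on the right-hand side collapse by stochasticity of $p_t$.
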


It was observed in \cite{franceschini2019stochastic} (just above equation (8) in \cite{franceschini2019stochastic}), that the orthogonal self-dualities given in \eqref{eq: ortduality} coincide with the polynomials  obtained by the Gram-Schmidt orthogonalization procedure initialized with the classical duality functions given in \eqref{dualitypol}. In the present context, the Gram-Schmidt orthogonalization applied to \eqref{dualitypol} is \eqref{eq: ort dual I}. However, so far, no proof was provided of the fact that the orthogonalization procedure applied to classical self-duality functions leads again  to self-duality functions. In Section \ref{section: Self-intertwining and orthogonality relations} below, we prove, in a much more general context, that if we properly orthogonalize 
a self-intertwiner which is a generalized falling factorial polynomial, we get a generalized orthogonal polynomial which is again a self-intertwiner. The proof  boils down  to show the commutation of the semigroup of the point configuration process with the linear map of the orthogonalization procedure, i.e. that the orthogonalization procedure is a symmetry. From the self-intertwining relations just mentioned follows both classical and orthogonal self-duality relations. 
The self-intertwiner related to the generalized falling factorial polynomials is introduced in Section \ref{section: generalized falling factorial polynomials} below and  the connection between self-intertwining and classical self-dualities is explained in Section \ref{section: example IPS on finite set}.

\section{Self-Intertwining Relations}
\label{section: self-duality} 
In this section, we start by introducing the setting and the class of processes that we consider, namely the consistent and conservative Markov processes. Then, in Section \ref{section: generalized falling factorial polynomials}, we introduce the generalized falling factorial polynomials and we state and prove our first main result, a self-intertwining relation. In Section \ref{section: Self-intertwining and orthogonality relations}, after providing the construction of generalized orthogonal polynomials, we state and prove a second self-intertwining relation.

\subsection{Setting and Consistent Markov Processes}
\label{section:setting}

Throughout this article we investigate Markov processes whose state space consists of configurations of  non-labelled particles in some general measurable space $(E,\mathcal E)$. To avoid the technical difficulties associated with infinitely many particles (for example, a rigorous construction of interacting dynamics), we consider configurations of finitely many particles only.

We follow modern point process notation in modelling such configurations as finite counting measures on $(E,\mathcal E)$. Thus, let $\mathbf N_{<\infty}$ be the space of finite counting measures, i.e., the space of finite measures that assign values in $\N_0$ to every set $B\in \mathcal E$. The space is equipped with the $\sigma$-algebra $\mathcal N_{<\infty}$ generated by the counting variables $\mathbf N_{<\infty}\ni \eta\mapsto \eta(B)$, $B\in \mathcal E$. Assumptions on $(E,\mathcal E)$ are needed to ensure that every counting measure is a sum of Dirac measures, therefore we assume throughout the article that $(E,\mathcal E)$ is a Borel space (see \cite[Definition 6.1]{LastPenroseLecturesOnThePoissonProcess}). The reader  may think of a Polish space or $\R^d$. It is well-known (see, e.g., \cite[Chapter 6]{LastPenroseLecturesOnThePoissonProcess} or \cite[Section 1.1]{Kallenberg2017}) that for a Borel space, every finite counting measure $\eta\in \mathbf N_{<\infty}$ is either zero or of the form $\eta = \delta_{x_1}+\cdots +\delta_{x_n}$ for some $n\in \N$ and $x_1,\ldots, x_n\in E$. 
In particular, the total mass $\eta(E)$ corresponds to the total number of particles. 

For our purpose, a Markov process with state space $\mathbf{N}_{<\infty}$ is a collection
$(\Omega, \mathfrak F,(\eta_t)_{t\ge 0}, (\P_\eta)_{\eta \in \mathbf{N}_{<\infty}} )$, where $(\Omega, \mathfrak F)$ is a measurable space, $\eta_t:(\Omega, \mathfrak F)\to (\mathbf{N}_{<\infty}, \mathcal N_{<\infty})$ is a measurable map for all $t\geq 0$ and for $\eta \in \mathbf{N}_{<\infty}$, $\P_\eta$ are probability measures on $(\Omega, \mathfrak F)$ such that $\P_\eta(\eta_0=\eta) = 1$. The Markov property is implicitly assumed to be satisfied with respect to the natural filtration $\mathfrak F_t:=\sigma(\eta_s, \ 0\le s\le t)$.

We focus on a special class of Markov processes, which has been  considered in \cite{carinci2021consistent,LeJanRaimond, kipnis1982heat,schertzer_sun_swart_2016}, namely consistent Markov processes. Intuitively speaking, consistency refers to the fact that the removal of a particle uniformly at random commutes with the time-evolution of the process.
In order to precisely define the concept of consistent Markov process we introduce the lowering operator 
\begin{equation*}
    \mathcal{A}f(\eta) := \int f(\eta - \delta_x) \eta(\dd x), \quad \eta \in \mathbf{N}_{<\infty}
\end{equation*} 
acting on functions $f \in \mathcal{G}$, where $\mathcal{G}$ denotes the set of measurable functions $f :\mathbf{N}_{<\infty} \to \R$ such that the restriction of $f$ to every $n$-particle sector $\mathbf N_n:=\{\eta \in \mathbf N_{<\infty}:\ \eta(E) = n\}$ is bounded. Note  $\mathcal{A}$ is well-defined and that $\mathcal{A} f \in \mathcal{G}$ for $f \in \mathcal{G}$. 

\begin{definition}[Consistent Markov process]
    \label{def: consistent particle system}
    Let $(\eta_t)_{t\ge 0}$ be a Markov process  on $\mathbf{N}_{<\infty}$ with Markov semigroup $(P_t)_{t\geq 0}$. The process $(\eta_t)_{t\geq 0}$ said to be consistent if 
    for all $t\ge 0$ and  bounded and measurable function $f : \mathbf{N}_{<\infty} \to \R$ 
    \begin{align}\label{eq: consistency}
    	P_t\mathcal Af(\eta) = \mathcal A P_t f(\eta), \quad \eta \in \mathbf{N}_{<\infty}.
    \end{align}
\end{definition}

Notice that \eqref{eq: consistency} can be written as
\begin{align*}
    \E_{\eta}\bra{ \int f(\eta_t - \delta_x) \eta_t(\dd x)} = \int \E_{\eta - \delta_x} (f(\eta_t)) \eta(\dd x),
\end{align*}
where on the left hand-side we first evolve the system and after we remove uniformly at random a particle, while on the right hand-side we first remove uniformly at random a particle from the initial configuration and then we let evolve the process from the new initial state. We refer to 
\cite[Theorem~2.7 and Theorem~3.2]{carinci2021consistent} for further characterizations of consistency in terms of the infinitesimal generator $L$, namely $L\mathcal A= \mathcal AL$, and to Section~\ref{section: examples} and \ref{section: gSIP} for some examples of consistent Markov processes.

For our results we need the following set of assumptions.

\begin{assumption}
    \label{assumption: consistency and conserve number particles}
    We assume that $(\eta_t)_{t\geq 0}$ is a Markov process on $\mathbf{N}_{<\infty}$ with Markov semigroup $(P_t)_{t\geq 0}$, such that
    \begin{enumerate}[\normalfont(i)]
    	\item \label{item: consistency}it is consistent;
    	\item \label{item: conserving number particles} it is conservative, i.e. if $\eta_0\in \mathbf{N}_{<\infty}$ then $\eta_t( E )=\eta_0(E)$ for all $t\ge 0$. 
    \end{enumerate}
\end{assumption}

Notice that Assumption~\ref{assumption: consistency and conserve number particles}~(\ref{item: conserving number particles}) yields $P_t f \in \mathcal{G}$ for all $f \in \mathcal{G}$ and thus, by Assumption~\ref{assumption: consistency and conserve number particles}~(\ref{item: consistency}), we obtain $P_t \mathcal{A} f(\eta) =  \mathcal{A} P_t f(\eta)$ for $f \in \mathcal{G}$ and $\eta \in \mathbf{N}_{<\infty}$. 

Let us briefly explain how consistency as defined in Definition~\ref{def: consistent particle system} relates to a stronger form of consistency reminiscent of Kolmogorov's consistency theorem. Often the process $(\eta_t)_{t\geq 0}$ comes from a process for \emph{labelled} particles, as is the case for the independent random walkers in Section~\ref{section: IRW}. \emph{Strong consistency}, called \emph{compatibility} by Le Jan and Raimond 
\cite[Definition~1.1]{LeJanRaimond},  roughly means that time evolution and removal of any \emph{deterministic} particle commute---there is no need to choose the particle to be removed uniformly at random.

Precisely, suppose that for each $n\in \N$, we are given a transition function 
$(p_t^{[n]})_{t\geq 0}$ on $(E^n,\mathcal E^n)$ that preserves permutation invariance. Then one can define a transition function $(P_t)_{t\geq0}$ on $(\mathbf N_{<\infty}, \mathcal N_{<\infty})$ by $P_t(0,B)= \one_B(0)$ and 
\begin{equation} \label{eq:forgetting-labels}
	P_t(\delta_{x_1}+\cdots + \delta_{x_n},B)= p_t^{[n]}\bigl(x_1,\ldots,x_n; \iota_n^{-1}(B)\bigr),\quad (x_1,x_2,\ldots,x_n)\in E^n, B\in \mathcal N_{<\infty}
\end{equation} 
where  $\iota_n:E^n\to \mathbf N_{<\infty}$  is the map given by $\iota_n(x_1,\ldots,x_n) = \delta_{x_1}+\cdots + \delta_{x_n}$. 

\begin{definition}
    \label{def:strong-consistency}
	The family $(p_t^{[n]})_{t\geq 0}$ is \emph{strongly consistent} if for all $n\in \N$, $i\in \{1,\ldots,n\}$, and $(x_1,\ldots,x_n)\in E^n$, the image of the measure $\mathcal E^n\ni B\mapsto p_t^{[n]}(x_1,\ldots,x_n; B)$ under the map from $E^n$ to $E^{n-1}$ that consists of omission of $x_i$ is equal to the measure $\mathcal E^{n-1}\ni B\mapsto p_t^{[n-1]}(x_1,\ldots,\widehat x_i,\ldots, x_n;B)$, where $\widehat x_i$ means omission of the variable $x_i$.
\end{definition} 

An elementary but important observation is that strong consistency of the family $(p_t^{[n]})_{t\geq 0}$ implies consistency of $(P_t)_{t\geq0}$ in the sense of Definition~\ref{def: consistent particle system}. 
The observation yields a whole class of consistent processes, see Section~\ref{section:ex-strongly-consistent}.

Theorem~\ref{theorem: classical self-intertwining} uses both $(P_t)_{t\geq 0}$ and a semigroup $(p_t^{[n]})_{t\geq 0}$ for labelled particles. As we wish to use the semigroup $(P_t)_{t\geq 0}$ as our starting point, let us mention that \eqref{eq:forgetting-labels} implies 
\begin{equation}
    \label{eq:pt-to-ptn}
	(P_t f)(\delta_{x_1}+\cdots+ \delta_{x_n}) = \bigl( p_t^{[n]} f_n\bigr)(x_1,\ldots, x_n)
\end{equation}
whenever $f_n = f\circ \iota_n$ and $f:\mathbf N_{<\infty}\to \R$ is measurable and non-negative or bounded. This determines the action of $(p_t^{[n]})_{t\geq 0}$ on the space $\mathcal F_n$ of bounded, measurable, permutation invariant functions $f_n$ uniquely. Therefore, given a conservative semigroup $(P_t)_{t\geq 0}$ on $\mathbf N_{<\infty}$ we may take \eqref{eq:pt-to-ptn} as the \emph{definition} of an associated semigroup on the space of bounded permutation invariant functions $\mathcal F_n$. For $n = 0$, we set $\mathcal{F}_0 := \R$ and let $p_t^{[0]}$ be the identity operator on $\R$, for all $t \geq 0$.

\subsection{Generalized Falling Factorial Polynomials}
\label{section: generalized falling factorial polynomials}

Let $\eta=\sum_{i=1}^m \delta_{x_i}\in\mathbf{N}$, $n \in \N$, and recall (see \eqref{eq: falling fact meas} above) that $\eta^{(n)}$ denotes the $n$-th factorial measure  of $\eta$, i.e.
\begin{align*}
    \eta^{(n)}:= \sideset{}{^{\neq}}\sum_{1 \leq i_1, \ldots, i_n \leq m} \delta_{(x_{i_1}, \ldots, x_{i_n})},
\end{align*}
where $\eta = 0$ yields $\eta^{(r)} = 0$. 

\begin{definition}
    For $n\in \N$ and measurable $f_n:E^n \to \R$ we define the associated generalized falling factorial polynomial as follows
    \begin{align*}
        J_n(  f_n, \eta):= \int  f_n(x_1, \ldots, x_n) \eta^{(n)}(\dd (x_1, \ldots, x_n)), \quad \eta \in \mathbf{N}_{<\infty}.
    \end{align*}
    For $n = 0$ and $f_0 \in \R$ we set $J_0(f_0, \eta) := \int f_0 \dx \eta^{(0)} := f_0$. 
\end{definition}

In particular, we have $J_n(f_n, \sd) \in \mathcal{G}$ for $f_n \in \mathcal{F}_n$. 

\begin{remark}
    The fact that $J_n$ generalizes falling factorial polynomials becomes evident when considering $f_n=\one_{B_1}^{\otimes d_1} \otimes \cdots \otimes \one_{B_N}^{\otimes d_N}$ for pairwise disjoint sets $B_1, \ldots, B_N \in \mathcal{E}$, $N \in \N$ and $d_1 \ldots, d_N \in \N_0$, $d_1 + \ldots + d_N =: n$. Indeed, it follows from the definition of the factorial measure  that
    \begin{align}\label{eq: relation factorial from tensor products}
        J_n (\one_{B_1}^{\otimes d_1} \otimes \cdots \otimes \one_{B_N}^{\otimes d_N}, \eta) =  (\eta(B_1))_{d_1} \cdots (\eta(B_N))_{d_N}, \quad \eta \in \mathbf{N}_{<\infty}
    \end{align}
    where $(a)_k := a (a-1) \cdots (a-k+1)$, $a \in \R$, $k \in \N$, $(a)_0 := 1$,  denotes the falling factorial. Equation~\eqref{eq: relation factorial from tensor products} will be used in Section \ref{section: examples} below to recover known self-duality functions for particle systems on finite set from the abstract Theorem \ref{theorem: classical self-intertwining}. We refer to \cite{finkelshtein2021stirling} for further properties of the generalized falling factorial polynomials.
\end{remark}

Our first main result is an intertwining relation between the Markov semigroup $(P_t)_{t\geq 0}$ and $(p_t^{[n]})_{t\geq 0}$, with  the generalized falling factorial polynomials $J_n$ defined above as intertwiner. Thus, we view the result as a generalization of the self-duality relations for interacting particle systems on a finite set where the self-duality functions consist in weighted falling factorial moments of the occupation variables (see, e.g., \cite[Theorem 1.1, p.363]{liggett_interacting_2005}, \eqref{dualitypol} above and Section \ref{section: example IPS on finite set} below) .

\begin{theorem}[Self-intertwining relation]\label{theorem: classical self-intertwining} 
Let $(\eta_t)_{t\geq 0}$ be a Markov process satisfying Assumption~\ref{assumption: consistency and conserve number particles}. We then have
	\begin{align}
	    \label{eq: self intertwining Jr}
	    P_t J_n( f_n,\sd)(\eta) = J_n ( p_t^{[n]} f_n ,\eta), \quad \eta \in \mathbf{N}_{<\infty}
	\end{align}
	for each $f_n\in\mathcal F_{n}$, $n \in \N_0$ and $t \geq 0$.
\end{theorem}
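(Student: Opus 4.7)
The plan is to deduce \eqref{eq: self intertwining Jr} from the iterated consistency relation $P_t \mathcal{A}^k = \mathcal{A}^k P_t$ (obtained from Definition~\ref{def: consistent particle system} by induction on $\mathcal G$, using that $\mathcal A$ maps $\mathcal G$ into itself) together with a combinatorial identity that recasts $J_n$ as an iterated lowering applied to a localized function. By conservativity I would work sector by sector: on $\mathbf{N}_m$ with $m < n$ the factorial measure $\eta^{(n)}$ vanishes and the process remains in $\mathbf{N}_m$, so both sides of \eqref{eq: self intertwining Jr} are identically zero and there is nothing to prove. The substantive case is $m \geq n$.

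Given $f_n \in \mathcal{F}_n$, I would introduce the localized extension $\tilde{f}_n \in \mathcal G$ defined by $\tilde{f}_n(\delta_{y_1}+\cdots+\delta_{y_n}) := f_n(y_1,\ldots,y_n)$ on $\mathbf{N}_n$ (well-defined by the symmetry of $f_n$) and by $\tilde{f}_n \equiv 0$ on $\mathbf{N}_{<\infty}\setminus \mathbf{N}_n$. For $\eta = \delta_{x_1} + \cdots + \delta_{x_m} \in \mathbf{N}_m$, I would unfold the factorial measure $\eta^{(n)}$ as a sum over ordered $n$-tuples of distinct indices and expand $\mathcal{A}^{m-n}\tilde{f}_n(\eta)$ as a sum over ordered $(m-n)$-tuples of removed indices; the symmetry of $f_n$ collapses both expressions onto the same sum over unordered $n$-subsets of $\{1,\ldots,m\}$, and comparing multiplicities gives the key identity
$$
J_n(f_n, \eta) = \frac{n!}{(m-n)!}\, \mathcal{A}^{m-n}\tilde{f}_n(\eta), \qquad \eta \in \mathbf{N}_m.
$$

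To finish, I would apply $P_t$ to this identity, use iterated consistency to move $P_t$ past $\mathcal{A}^{m-n}$, and observe that by conservativity $P_t \tilde{f}_n$ is again supported on $\mathbf{N}_n$, with restriction to $\mathbf{N}_n$ equal to the extension of $p_t^{[n]} f_n$ by the defining relation \eqref{eq:pt-to-ptn} (symmetry of $p_t^{[n]} f_n$ follows from that of $f_n$ since permutation invariance is preserved by $p_t^{[n]}$). Applying the combinatorial identity in reverse with $p_t^{[n]} f_n$ in place of $f_n$ cancels the prefactors $n!/(m-n)!$ and yields $P_t J_n(f_n,\cdot)(\eta) = J_n(p_t^{[n]} f_n, \eta)$ on every sector $\mathbf{N}_m$. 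The only genuinely nontrivial step is the bookkeeping behind the combinatorial identity relating $J_n(f_n,\cdot)$ to $\mathcal{A}^{m-n}\tilde{f}_n$; once that is in hand, the rest is a mechanical use of consistency, conservativity, and the definition of $p_t^{[n]}$.
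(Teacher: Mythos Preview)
Your proposal is correct and follows essentially the same approach as the paper. Both arguments rest on the same two ingredients: the combinatorial identity
\[
J_n(f_n,\eta)=\frac{n!}{(m-n)!}\,(\text{iterated lowering of }f_n)(\eta),\qquad \eta\in\mathbf N_m,
\]
and the commutation of the semigroup with the lowering, iterated $m-n$ times. The only difference is notational: the paper carries out the computation in the labeled picture, defining raising maps $\mathcal A_{r-1,r}:\mathcal F_{r-1}\to\mathcal F_r$ and translating consistency into $p_t^{[r]}\mathcal A_{r-1,r}=\mathcal A_{r-1,r}p_t^{[r-1]}$, whereas you stay in the configuration picture with the lowering operator $\mathcal A$ on $\mathcal G$ and the relation $P_t\mathcal A=\mathcal A P_t$. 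Your extension $\tilde f_n$ supported on $\mathbf N_n$ plays exactly the role of the paper's $f_n\in\mathcal F_n$ under the identification $\iota_n$, and the two combinatorial identities coincide under this correspondence.
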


\begin{proof}
    Let us define the lowering operator $\mathcal{A}_{r-1, r}$ acting on functions $f_{r-1}\in \mathcal F_{r-1}$ as
    \begin{align*}
        \mathcal{A}_{r-1, r} f_{r-1}(x_1, \ldots, x_r) := \sum_{k=1}^r f_{r-1}(x_1, \ldots, x_{k-1}, x_{k+1}, \ldots, x_r)
    \end{align*}
    for $x_1,\ldots,x_r\in E$ and $r \geq 2$ and $\mathcal{A}_{0,1} f_0 :=  f_0\one$, $f_0 \in \R$ for $r = 1$. We then have, as a direct consequence of consistency of $(\eta_t)_{t\geq 0}$, that $p_t^{[r]}  \mathcal{A}_{r-1, r} f_{r-1} =  \mathcal{A}_{r-1, r} p_t^{[r-1]} f_{r-1}$, $r \in \N$. Denoting for all $r \geq n \geq 0$,
    \begin{align*}
        \mathcal{A}_{n, r} f_n := \begin{cases} \mathcal{A}_{n,n+1} \cdots \mathcal{A}_{r-1, r} f_n & r >n \\
        f_n & n = r
        \end{cases},
    \end{align*}
    for all $f_n \in \mathcal{F}_n$, one obtains, by induction, that
    \begin{align*}
        p_t^{[r]}  \mathcal{A}_{n, r} f_{n} =  \mathcal{A}_{n, r} p_t^{[n]}f_n.
    \end{align*}
    The proof is concluded by noticing that for all $n\le r$, $x_1, \ldots, x_r \in E$, 
    \begin{equation*}
        J_n(  f_n, \delta_{x_1} + \ldots + \delta_{x_r})=\frac{n!}{(r-n)!}  \mathcal{A}_{n, r} f_n(x_1, \ldots, x_r). \qedhere
    \end{equation*} 
\end{proof}

\begin{remark}
    \label{rem:consistency-equivalence}
	A close look at the proof reveals that the relation in Theorem~\ref{theorem: classical self-intertwining} is in fact an equivalence: A conservative process is consistent if and only if the self-intertwining relation~\eqref{eq: self intertwining Jr} holds true for all $n,f_n,t$. The equivalence is closely related to Theorem~4.3
	in \cite{carinci2021consistent} in the discrete setting.
\end{remark}

Theorem~\ref{theorem: classical self-intertwining} can be rephrased in a number of ways. The first rephrasing is in terms of kernels and justifies the denomination \emph{intertwining}. Let $\Lambda_n: \mathbf N_{<\infty}\times \mathcal E^n\to \R_+$ be the kernel given by $\Lambda_n(\eta,B):= \eta^{(n)}(B) = J_n(\one_B,\eta)$. Then, $P_t \Lambda_n = \Lambda_n p_t^{[n]}$ meaning that 
\[
	\int P_t(\eta,\dd \xi) \Lambda_n(\xi,B) = \int \Lambda_n(\eta,\dd x) p_t^{[n]}(x,B)
\] 
for all $\eta\in \mathbf N_{<\infty}$ and all permutation invariant sets $B\in \mathcal E^n$. Hence, the kernel $\Lambda_n(\eta,B) = J_n(\one_B,\eta)$ intertwines the semigroups $(P_t)$ and $(p_t^{[n]})$. The second rephrasing uses the semi-group $(P_t)$ only, which makes the ``self'' in self-intertwining spring to the eye. Set
\begin{align*}
    \mathcal{K}(f,\eta) :=  f(0) + \sum_{n=1}^\infty \frac{1}{n!} \int f(\delta_{x_1} + \ldots + \delta_{x_n}) \eta^{(n)}(\dd(x_1, \ldots, x_n))
\end{align*}
for measurable bounded $f:\mathbf{N}_{<\infty} \to \R$ and $\eta\in \mathbf N_{<\infty}$.
Note that the integral vanishes for $n > \eta(E)$ and $\mathcal{K}(f, \sd) \in \mathcal{G}$ for $f\in \mathcal{G}$.
The function $\mathcal{K}(f, \sd)$ is also known as $K$-transform of $f$ (cf. \cite{lenard73}) and by linearity, it follows from \eqref{eq:pt-to-ptn} and \eqref{eq: self intertwining Jr} that $\mathcal{K}$ intertwines $(P_t)_{t\geq 0}$ with itself, i.e., 
\begin{equation} \label{eq:self-J}
	P_t \mathcal{K}(f, \sd)(\eta) = \mathcal{K}(P_t f, \eta).
\end{equation} 
for $f \in \mathcal{G}, \eta \in \mathbf{N}_{<\infty}$. For free Kawasaki dynamics, which is a special case of independent particles, this result is in fact known (see \cite[Section 3.2]{kondratiev2009hydrodynamic}). 

In terms of expectations, the self-intertwining relation becomes 
\begin{align*}
    \E_{\eta} \sbra{ \int f(\delta_{x_1} + \ldots + \delta_{x_n}) \eta_t^{(n)}(\dd(x_1, \ldots, x_n))} = \int \E_{\delta_{x_1} + \ldots + \delta_{x_n}} \sbra{f(\eta_t)} \eta^{(n)}(\dd(x_1, \ldots, x_n))
\end{align*}
for measurable, bounded $f:\mathbf{N}_{n} \to \R$, $n \in \N_0$ and $t \geq 0$.

To conclude we note a corollary on the time-evolution of correlation functions and explain the relation with Proposition~\ref{gensdt}. 

\begin{corollary}
    \label{cor:correlation-functions}
	Under the assumptions of Theorem~\ref{theorem: classical self-intertwining}, the following holds true for every initial condition $\eta\in \mathbf N_{<\infty}$. 
	Let $\alpha_n^t(B):= \E_\eta[\eta_t^{(n)}(B)]$ be the $n$-th factorial moment measure of the process $(\eta_t)_{t\geq 0}$ started in $\eta$. Then 
	\[
		\alpha_n^t(B) = \int \alpha_n^0(\dd x) p_t^{[n]}(x,B)
	\] 
	for all $n\in \N$, $t\geq 0$, and permutation-invariant sets $B\in \mathcal E^n$. 
\end{corollary}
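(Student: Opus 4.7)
The plan is to read the corollary off directly from Theorem~\ref{theorem: classical self-intertwining} by specializing $f_n$ to the indicator of $B$. Since $B \in \mathcal E^n$ is permutation invariant, $\one_B$ is a bounded, measurable, permutation-invariant function, hence $\one_B \in \mathcal F_n$, and Theorem~\ref{theorem: classical self-intertwining} can be applied. Moreover, on each sector $\mathbf N_k$ the measure $\eta^{(n)}$ has total mass $k!/(k-n)!$, so $J_n(\one_B,\sd) \in \mathcal G$ and the semigroup $P_t$ acts on it in the standard way.

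From the very definition of the generalized falling factorial polynomial, $J_n(\one_B,\xi) = \xi^{(n)}(B)$ for every $\xi \in \mathbf N_{<\infty}$. Taking the expectation under $\P_\eta$ therefore gives
\[
    P_t J_n(\one_B,\sd)(\eta) \;=\; \E_\eta\bigl[\eta_t^{(n)}(B)\bigr] \;=\; \alpha_n^t(B).
\]
On the other hand, $p_t^{[n]} \one_B(x) = p_t^{[n]}(x,B)$ by the definition of a transition kernel, so the right-hand side of the self-intertwining relation~\eqref{eq: self intertwining Jr} reads
\[
    J_n\bigl(p_t^{[n]} \one_B,\, \eta\bigr) \;=\; \int p_t^{[n]}(x,B)\, \eta^{(n)}(\dd x).
\]
Since the initial condition is deterministic, $\alpha_n^0 = \eta^{(n)}$, and combining the two displays with Theorem~\ref{theorem: classical self-intertwining} yields $\alpha_n^t(B) = \int \alpha_n^0(\dd x)\, p_t^{[n]}(x,B)$, as claimed.

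There is no genuine obstacle here: the corollary is merely Theorem~\ref{theorem: classical self-intertwining} reformulated in the language of factorial moment measures. The only point requiring attention is the permutation-invariance hypothesis on $B$, which is precisely what is needed to guarantee $\one_B \in \mathcal F_n$ and to make the expression $p_t^{[n]}(x,B)$ well-defined via \eqref{eq:pt-to-ptn}.
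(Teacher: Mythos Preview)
Your proof is correct and follows exactly the same approach as the paper: specialize Theorem~\ref{theorem: classical self-intertwining} to $f_n=\one_B$, identify $P_tJ_n(\one_B,\sd)(\eta)$ with $\alpha_n^t(B)$ and $J_n(p_t^{[n]}\one_B,\eta)$ with $\int \eta^{(n)}(\dd x)\,p_t^{[n]}(x,B)$, and then use $\alpha_n^0=\eta^{(n)}$. The paper compresses this into a single chain of equalities, but the content is identical.
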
 

Of course for deterministic initial condition $\eta$ the time-zero factorial moment measure is just $\alpha_n^0 = \eta^{(n)}$, but in the form given above the relation generalizes to random initial conditions. 

\begin{proof} 
	We have
	\begin{equation*}
		\alpha_n^t(B) =\E_\eta\Bigl[ J_n(\one_B,\eta_t)\Bigr] = J_n(p_t^{[n]}\one_B,\eta) = \int \eta^{(n)}(\dd x) (p_t^{[n]} \one_B)(x) = \int \alpha_n^{0}(\dd x) p_t^{[n]}(x,B).  \qedhere
	\end{equation*} 
\end{proof} 

A generalized version of Proposition~\ref{gensdt} is recovered under the additional condition that for some $\sigma$-finite measure $\lambda$ on $E$ and each $n\in \N$, there exists a measurable function $u_t^{[n]}:E\times E\to \R_+$ with $u_t^{[n]}(x,y) = u_t^{[n]}(y,x)$ on $E\times E$ and 
\begin{equation} \label{eq:strong-duality}
	p_t^{[n]}(x,B) = \int_B u_t^{[n]}(x,y) \lambda^{\otimes n}(\dd y)
\end{equation}
for all $t > 0$, $x \in E^n$, and permutation invariant set $B\in \mathcal E^n$. This assumption shares similarites with the notion of duality from probabilistic potential theory, see Blumenthal and Getoor \cite[Chapter VI]{blumenthal-getoor}; we emphasize that the latter notion of (self-)duality with respect to a measure is stronger than reversibility of the measure. The additional condition is satisfied for example by independent reversible diffusions. Corollary~\ref{cor:correlation-functions}, \eqref{eq:strong-duality}, and the symmetry of $u_t^{[n]}$ yield
\begin{equation} \label{eq:gen2}
	\E_\eta\bigl[ \eta_t^{(n)}(B)\bigr] = \int_B \Biggl( \int_{E^n} u_t^{[n]}(y,x) \eta^{(n)}(\dd x)\Biggr)  \lambda^{\otimes n}(\dd y).
\end{equation}
This relation generalizes Proposition~\ref{gensdt}.

\subsection{Generalized Orthogonal Polynomials}
\label{section: Self-intertwining and orthogonality relations}

In this section we generalize the orthogonal self-duality relation introduced in Section \ref{section: orthogonal self-duality RW} to the class of  Markov processes on $\mathbf{N}_{<\infty}$ satisfying Assumption \ref{assumption: consistency and conserve number particles}. More precisely, assuming that there exists a reversible measure $\rho$, we show another self-intertwining relation where the intertwiner satisfies an orthogonality relation with respect to this measure. The intertwiner is a so-called \textit{generalized orthogonal polynomial}, a well studied object in the infinite dimensional analysis literature (see, e.g., \cite{schoutens}, \cite{CalculusVariationsProcessesIndependentIncrements} and \cite{Lytvynov2003}). We thus start by constructing the generalized orthogonal polynomials, following closely \cite{Lytvynov2003}.

Let $\rho$ be a probability measure on $(\mathbf{N}_{<\infty}, \mathcal{N}_{<\infty})$. We use the shorthand $L^2(\rho) := L^2(\mathbf{N}_{<\infty}, \mathcal{N}_{<\infty}, \rho)$. Through the rest of the section we assume that all moments of the total number of particles are finite. 

\begin{assumption}
    \label{assumption moments exist}
    Assume $\int \eta(E)^n \rho(\dd \eta) < \infty$
    for all $n \in \N$.
\end{assumption}

Assumption~\ref{assumption moments exist} implies that every map $\eta\mapsto \eta^{\otimes n}(f_n)= \int f_n\dd \eta^{\otimes n}$, with $f_n:E^n\to \R$ a bounded measurable function, is in $L^2(\rho)$. 

Orthogonal polynomials in a single real variable can be constructed by an orthogonalization procedure. This definition extends to the infinite-dimensional setting: generalized orthogonal polynomals are defined by
taking an  orthogonal projection onto a proper subspace of generalized  polynomials, see \cite{Lytvynov2003} and references therein. We thus define the space $\mathcal P_n$ of generalized polynomials (with bounded coefficients) of degree less or equal than $n \in \N_0$ as the set of linear combinations of maps $\eta\mapsto \int f_k\dd \eta^{\otimes k}$, $k\leq n$, with bounded measurable $f_k:E^k\to \R_+$, with the convention $\eta^{\otimes 0}(f_0):=f_0 \in \R$. Thus the set $\mathcal P_0$ consists of the constant functions. We refer to the functions $f_k$ as coefficients. 

Assumption 2 guarantees that every polynomial is square-integrable, i.e., $\mathcal P_n$ is a subspace of $L^2(\rho)$. In general it is not closed, we write $\overline{\mathcal P_n}$ for its closure in $L^2(\rho)$. The linear space $\mathcal P_n$ and its closure have the same orthogonal complement $\mathcal P_n^\perp = \overline{\mathcal P_n}^\perp$ in $L^2(\rho)$. 

The next definition is equivalent to a definition from \cite[Section~5]{Lytvynov2003}.

\begin{definition}[Generalized orthogonal polynomials]
For $n\in \N$ and $f_n:E^n \to \R$ a bounded measurable function we define the associated generalized orthogonal polynomial as follows
\begin{align*}
   I_n(f_n, \sd) := \text{orthogonal projection of } (\eta \mapsto \eta^{\otimes n}(f_n)) \text{ onto } \overline{\mathcal P_{n-1}}^\perp.
\end{align*}
\end{definition}

\noindent Equivalently, 
\[
	I_n (f_n,\eta) = \eta^{\otimes n}(f_n) - Q(\eta)
\] 
with $Q \in \overline{\mathcal P_{n-1}}$ the orthogonal projection of $\eta\mapsto \eta^{\otimes n}(f_n)$ onto $\overline{\mathcal P_{n-1}}$. Notice that $I_n(f_n,\eta)$ is only defined up to $\rho$-null sets. 

\begin{remark}[Wick dots and multiple stochastic integrals]
In the literature (see, e.g., \cite[Section~5]{Lytvynov2003}) the generalized orthogonal polynomial $I_n(f_n, \eta)$ is often denoted by  $:\eta^{\otimes n}(f_n):$ (``Wick dots''). When $\rho$ is the distribution of a Poisson point process with intensity measure $\lambda$, the generalized orthogonal polynomial is given by a multiple stochastic integral with respect to the compensated Poisson measure $\eta - \lambda$ (see the references provided at the end of Section~\ref{section: IRW}), hence the notation $I_n(f_n,\eta)$. The notation has the advantage of being analogous to the one  used for the self-intertwiner $J_n$ in Section \ref{section: generalized falling factorial polynomials}, which is why we keep it.
\end{remark}

\begin{remark}[Orthogonality relation]
It follows from the definition that 
\begin{align*}
    \int I_n(f_n, \sd) I_m(g_m, \sd) \dx \rho = 0
\end{align*}
for $n \neq m$. Moreover  $f_n \mapsto I_n(f_n, \sd)$ extends to a unitary operator on the space of permutation invariant functions that are square integrable with respect to some measure $\lambda_n$ (see, e.g., \cite[Corollary 5.2]{Lytvynov2003} for further details). When $\rho$ is the distribution of a Poisson process with intensity measure $\lambda$, the measure $\lambda_n$ is the product $\lambda_n=\lambda^{\otimes n}$, but in general the measure $\lambda_n$ is more complicated. 
\end{remark}

\begin{remark} [Chaos decompositions and L{\'e}vy white noise] Generalized orthogonal polynomials appear naturally in the study of non-Gaussian white noise \cite{berezansky1996infinite-dimensional,berezansky2002pascal}, they are used to prove chaos decompositions. The relation between polynomial chaos and chaos decompositions in terms of multiple stochastic integrals with respect to \emph{power jump martingales} \cite{nualart-schoutens2000} is investigated in detail \cite{Lytvynov2003}. 
Chaos decompositions play a role in the study of L{\'e}vy white noise and stochastic differential equations driven by L{\'e}vy white noise \cite{dinunno-oksendal-proske2004,lokka-proske2006,meyerbrandis2008}. 
\end{remark}

We complement the definition of the generalized orthogonal polynomials by two propositions on their properties. The first proposition says that the orthogonal polynomials can also be obtained by an orthgonal projection of the generalized falling factorial polynomials $\eta\mapsto J_n(f_n,\eta)$ instead of $\eta \mapsto \eta^{\otimes}(f_n)$. This observation plays an important role in the proof of Theorem~\ref{theorem: intertwining projection}.

\begin{proposition}
\label{proposition: properties ort pol 1}
The following identities hold 
\begin{align}
    \label{equation: spanned by Jk}
    \mathcal{P}_n &= \set{\eta \mapsto \sum_{k=0}^n J_k(f_k, \eta) : f_k \in \mathcal{F}_k, k \in \set{0, \ldots, n}, n \in \N_0}, \\
        \label{equation: projection Jk} 
 I_n(f_n, \sd) &= \text{orthogonal projection of }  J_n(f_n, \sd) \text{ onto } \overline{\mathcal P_{n-1}}^\perp, \quad f_n \in \mathcal{F}_n. 
    \end{align}
\end{proposition}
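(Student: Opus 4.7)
The plan is to establish a standard combinatorial identity relating the tensor powers $\eta^{\otimes n}$ to the factorial measures $\eta^{(k)}$, $k\leq n$, and then to deduce both items from it. For a set partition $\pi = \{B_1, \ldots, B_k\}$ of $\{1,\ldots,n\}$ and a bounded measurable $f_n \colon E^n \to \R$, I define $D_\pi f_n \colon E^{|\pi|} \to \R$ by $D_\pi f_n(y_1,\ldots,y_k) := f_n(y_{\beta(1)},\ldots,y_{\beta(n)})$, where $\beta(i)$ is the index of the block of $\pi$ containing $i$. Grouping the indices in the explicit expression $\eta^{\otimes n}(f_n) = \sum_{i_1, \ldots, i_n} f_n(x_{i_1}, \ldots, x_{i_n})$ (for $\eta = \sum_j \delta_{x_j}$) according to their coincidence pattern yields
\begin{equation*}
    \eta^{\otimes n}(f_n) = \sum_{\pi \in \Pi_n} J_{|\pi|}(D_\pi f_n, \eta),
\end{equation*}
where $\Pi_n$ denotes the lattice of set partitions of $\{1,\ldots,n\}$. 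Because $\eta^{(|\pi|)}$ is symmetric under permutations of its coordinates, $D_\pi f_n$ may be replaced by its symmetrization $\mathrm{sym}(D_\pi f_n) \in \mathcal{F}_{|\pi|}$. A M\"obius inversion over $\Pi_n$ then expresses $J_n(f_n, \eta)$ conversely as a signed linear combination of $\eta^{\otimes k}(g_k)$ with $k \leq n$ and bounded $g_k$.

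For \eqref{equation: spanned by Jk}, these two expansions immediately give both inclusions: every $J_k(f_k, \sd)$ with $f_k \in \mathcal{F}_k$ is a linear combination of maps $\eta \mapsto \eta^{\otimes j}(g_j)$ with $j \leq k$, and hence lies in $\mathcal{P}_k \subseteq \mathcal{P}_n$; and conversely, each generator $\eta \mapsto \eta^{\otimes k}(f_k)$ of $\mathcal{P}_n$ equals $\sum_{\pi \in \Pi_k} J_{|\pi|}(\mathrm{sym}(D_\pi f_k), \eta)$, a linear combination of the desired form. Passing to arbitrary linear combinations yields the set equality.

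For \eqref{equation: projection Jk}, I fix $f_n \in \mathcal{F}_n$ and isolate in the forward expansion the term corresponding to the partition of $\{1,\ldots,n\}$ into singletons; that term is exactly $J_n(f_n, \eta)$. Rearranging gives
\begin{equation*}
    \eta^{\otimes n}(f_n) - J_n(f_n, \eta) = \sum_{\substack{\pi \in \Pi_n \\ |\pi| < n}} J_{|\pi|}(\mathrm{sym}(D_\pi f_n), \eta),
\end{equation*}
and the right-hand side belongs to $\mathcal{P}_{n-1}$ by \eqref{equation: spanned by Jk}. Since orthogonal projection onto $\overline{\mathcal{P}_{n-1}}^\perp$ is linear and annihilates $\mathcal{P}_{n-1}$ (hence also its $L^2(\rho)$-closure), the projections of $\eta \mapsto \eta^{\otimes n}(f_n)$ and of $J_n(f_n, \sd)$ onto $\overline{\mathcal{P}_{n-1}}^\perp$ coincide, and \eqref{equation: projection Jk} follows from the very definition of $I_n$. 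The main technical concern is just the bookkeeping in the partition-indexed expansion, together with the observation that $|J_{|\pi|}(\mathrm{sym}(D_\pi f_n), \eta)| \leq \|f_n\|_\infty\, \eta(E)^{|\pi|}$, which by Assumption~\ref{assumption moments exist} ensures that all polynomials involved lie in $L^2(\rho)$ so that the orthogonal projection argument is meaningful.
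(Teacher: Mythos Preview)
Your proof is correct and follows essentially the same route as the paper's own argument: both rely on the set-partition expansion linking $\eta^{\otimes n}(f_n)$ to the factorial integrals $J_k$, $k\le n$ (and its inverse), and both deduce \eqref{equation: projection Jk} by observing that $\eta^{\otimes n}(f_n)-J_n(f_n,\eta)\in\mathcal P_{n-1}$ so that the two maps share the same projection onto $\overline{\mathcal P_{n-1}}^\perp$. Your explicit mention of M\"obius inversion and the $L^2(\rho)$-integrability check are minor additions but do not change the substance of the argument.
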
 

We note that \eqref{equation: spanned by Jk} is a direct consequence of the fact that $J_k(f_k, \sd)$ can be written as linear combination of integrals with respect to the product measure of degree $\leq k$ and vice versa, see \cite[Eq.~(3.1)-(3.3)]{finkelshtein2021stirling}.  We provide a complete proof of the above proposition in  Appendix \ref{appendix: properties ort pol}. 

The second proposition applies under an additional assumption of complete independence. 
A finite point process $\zeta$ is \emph{completely independent} (or \emph{completely orthogonal}) \cite{LastPenroseLecturesOnThePoissonProcess} if the counting variables $\zeta(A_1),\ldots, \zeta(A_m)$ associated with pairwise disjoint regions $A_1,\ldots, A_m\in \mathcal E$, $m\in \N$, are independent. Complete independence implies a factorization property of generalized orthogonal polynomials with disjointly supported coefficients. 

\begin{proposition}
    \label{proposition: properties ort pol 2}
	Suppose that $\rho$ is the distribution of some finite completely independent point process. 
	Let $N \geq 2$, $A_1,\ldots, A_N\in \mathcal E$ pairwise disjont, and $d_1,\ldots, d_N\in\N_0$. Further let $f_i:E^{d_i}\to \R$, $i=1,\ldots,N$ be bounded measurable functions that vanish on $E^{d_i}\setminus A_i^{d_i}$. Set $n:=d_1+\cdots + d_N$. Then 
	\begin{align}
        \label{equation: factorization of orthogonal polynomials}
        I_n(f_1 \otimes \ldots \otimes f_n, \eta) = I_{d_1}(f_1, \eta) \cdots I_{d_n}(f_n, \eta)
    \end{align}
    for $\rho$-almost all $\eta\in \mathbf N_{<\infty}$. 	
\end{proposition}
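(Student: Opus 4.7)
The plan is to exploit the tensor product structure that complete independence induces on $L^2(\rho)$. Setting $A_0:=E\setminus(A_1\cup\cdots\cup A_N)$, the restrictions $\eta|_{A_0},\eta|_{A_1},\ldots,\eta|_{A_N}$ are independent point processes with marginal laws $\rho_0,\rho_1,\ldots,\rho_N$, and $L^2(\rho)$ factorizes as the Hilbert tensor product $\bigotimes_{j=0}^N L^2(\rho_j)$. Write $\mathcal P_k^{(\sigma)}$ for the space of generalized polynomials of degree at most $k$ in $L^2(\sigma)$ and $I_k^{(\sigma)}$ for the associated generalized orthogonal polynomial. Expanding $\eta(\phi)=\sum_{j=0}^N \eta|_{A_j}(\phi|_{A_j})$ inside the defining monomials and a density/product-function argument shows that $\overline{\mathcal P_k^{(\rho)}}$ is the $L^2(\rho)$-closure of the span of products $\prod_{j=0}^N F_j(\eta|_{A_j})$ with $F_j\in\mathcal P_{m_j}^{(\rho_j)}$ and $\sum_j m_j\le k$.

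The first step is a \emph{locality lemma}: since $f_i$ vanishes outside $A_i^{d_i}$, the polynomial $I_{d_i}(f_i,\eta)$ equals $I_{d_i}^{(\rho_i)}(f_i,\eta|_{A_i})$ in $L^2(\rho)$. I would verify the two defining properties of $I_{d_i}(f_i,\cdot)$ for the candidate $I_{d_i}^{(\rho_i)}(f_i,\eta|_{A_i})$. First, $\eta^{\otimes d_i}(f_i)=\eta|_{A_i}^{\otimes d_i}(f_i)$ depends only on $\eta|_{A_i}$, so $\eta^{\otimes d_i}(f_i)-I_{d_i}^{(\rho_i)}(f_i,\eta|_{A_i})\in \overline{\mathcal P_{d_i-1}^{(\rho_i)}}\subset\overline{\mathcal P_{d_i-1}^{(\rho)}}$. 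Second, for any product test $\prod_{j=0}^N F_j(\eta|_{A_j})$ with $\sum_j m_j\le d_i-1$, independence gives
\[
    \E\Bigl[I_{d_i}^{(\rho_i)}(f_i,\eta|_{A_i})\prod_{j=0}^N F_j(\eta|_{A_j})\Bigr]=\E\bigl[I_{d_i}^{(\rho_i)}(f_i,\eta|_{A_i})F_i(\eta|_{A_i})\bigr]\prod_{j\ne i}\E[F_j(\eta|_{A_j})],
\]
and the first factor vanishes because $F_i\in\mathcal P_{m_i}^{(\rho_i)}\subset\mathcal P_{d_i-1}^{(\rho_i)}$. Uniqueness of the orthogonal decomposition then identifies $I_{d_i}(f_i,\eta)$ with $I_{d_i}^{(\rho_i)}(f_i,\eta|_{A_i})$ $\rho$-almost surely.

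The second step is a pigeonhole-plus-expansion argument. Using the locality lemma we treat each $I_{d_i}(f_i,\eta)$ as a function of $\eta|_{A_i}$ alone, and the same independence decomposition as above shows $\prod_{i=1}^N I_{d_i}(f_i,\eta|_{A_i})\perp\overline{\mathcal P_{n-1}^{(\rho)}}$: for any product $\prod_{j=0}^N F_j(\eta|_{A_j})$ with $\sum_{j=0}^N m_j\le n-1$, the constraint $\sum_{i=1}^N d_i=n$ forces some $i\in\{1,\ldots,N\}$ with $m_i\le d_i-1$, making the corresponding factor $\E[I_{d_i}(f_i,\eta|_{A_i})F_i(\eta|_{A_i})]$ zero. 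Next, disjointness of the $A_i$ yields the pointwise factorization $\eta^{\otimes n}(f_1\otimes\cdots\otimes f_N)=\prod_{i=1}^N\eta^{\otimes d_i}(f_i)$. Writing $R_i:=\eta^{\otimes d_i}(f_i)-I_{d_i}(f_i,\eta|_{A_i})\in\overline{\mathcal P_{d_i-1}^{(\rho_i)}}$ and expanding
\[
    \prod_{i=1}^N \eta^{\otimes d_i}(f_i)=\prod_{i=1}^N \bigl(I_{d_i}(f_i,\eta|_{A_i})+R_i\bigr)
\]
produces the desired term $\prod_i I_{d_i}(f_i,\eta|_{A_i})$ plus cross-terms, each of which contains at least one $R_i$ and therefore lies in $\overline{\mathcal P_{n-1}^{(\rho)}}$. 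Uniqueness of the orthogonal decomposition defining $I_n$ now forces $I_n(f_1\otimes\cdots\otimes f_N,\eta)=\prod_{i=1}^N I_{d_i}(f_i,\eta|_{A_i})$ $\rho$-a.s.

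The main technical obstacle is to justify that $L^2$-closures are compatible with products taken along independent components: that if $\phi_\ell\to\phi$ in $L^2(\rho_j)$ and $\psi_\ell\to\psi$ in $L^2(\rho_{j'})$ with $j\ne j'$, then $\phi_\ell\psi_\ell\to\phi\psi$ in $L^2(\rho)$, and hence that $\overline{\mathcal P_{a}^{(\rho_j)}}\cdot\overline{\mathcal P_{b}^{(\rho_{j'})}}\subset \overline{\mathcal P_{a+b}^{(\rho)}}$. This uses independence---via the splitting $\phi_\ell\psi_\ell-\phi\psi=(\phi_\ell-\phi)\psi_\ell+\phi(\psi_\ell-\psi)$ and the resulting product of $L^2$-norms---together with Assumption~\ref{assumption moments exist}, which ensures that all relevant generalized polynomials and their $L^2$-limits lie in $L^2(\rho)$. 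Once this stability is in hand, the algebra in the second step reduces to the one-line orthogonality computation above.
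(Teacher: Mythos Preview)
Your proposal is correct and follows essentially the same strategy as the paper's proof: a locality statement (your lemma that $I_{d_i}(f_i,\eta)$ depends only on $\eta|_{A_i}$ corresponds to the paper's Lemma~\ref{lem:localize}), a product-stability statement under $L^2$-closure (your ``technical obstacle'' is exactly the paper's Lemma~\ref{lem:product-polynomials}), and the pigeonhole argument showing orthogonality of the product to $\overline{\mathcal P_{n-1}}$. The only organizational differences are that you phrase things via the marginal laws $\rho_i$ and the tensor decomposition of $L^2(\rho)$ rather than the subspaces $\mathcal P_k(A)\subset L^2(\rho)$, and that you treat general $N$ directly whereas the paper reduces to $N=2$ and inducts; neither changes the substance.
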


The proposition is proven in Appendix~\ref{section: appendix proof of factorization}. For special cases of measures $\rho$ that give rise to orthogonal polynomials of Meixner's type, a similar factorization property is found, for example, in \cite[Lemma 3.1]{lytvynov2003JFA}. Our proposition instead holds true for all distributions of completely independent point processes.

\begin{remark} A particularly relevant case is when $f_i$ is the indicator of $A_i^{d_i}$. Then Proposition~\ref{proposition: properties ort pol 2} says that the orthogonalized version of $\eta\mapsto \prod_{i=1}^n \eta(A_i)^{d_i}$ is equal to the product of the orthogonalized versions of $\eta\mapsto \eta(A_i)^{d_i}$.
When $\rho$ is the distribution of a Poisson or Pascal point process (see Sections~\ref{section: examples} and~\ref{section: gSIP} below),  the orthogonalized version of $\eta(A_i)^{d_i}$ is in fact a univariate orthogonal polynomial in the variable $\eta(A_i) \in\N_0$ and we obtain a product of univariate orthogonal polynomials, see \eqref{eq:charlier-product} and~\eqref{eq: Meixner polynomials}. In general, however, the orthogonalized version of $\eta(A_i)^{d_i}$ need not be a univariate polynomial. 
\end{remark}

We now state the second main theorem of this section, which is the analogue of Theorem \ref{theorem: classical self-intertwining} but where the self-intertwiner is the generalized orthogonal polynomial introduced above.

\begin{theorem}[Self-intertwining relation]
    \label{theorem: intertwining projection}
    Let $(\eta_t)_{t\geq 0}$ be a Markov process on $\mathbf{N}_{<\infty}$ that satisfies Assumption~\ref{assumption: consistency and conserve number particles}, i.e.\ it is  consistent and conservative. Let $\rho$ be a reversible probability measure for $(\eta_t)_{t\geq 0}$ that satisfies Assumption~\ref{assumption moments exist}. Then, 
    \begin{align}\label{eq: intertwiner 2}
        P_t I_n(f_n, \cdot)(\eta) = I_n(p_t^{[n]}f_n, \eta)
    \end{align}
    for $\rho$-almost all $\eta\in \mathbf N_{<\infty}$, all $t \geq 0$, and all $f_n \in \mathcal{F}_n$.
\end{theorem}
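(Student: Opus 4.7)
The plan is to exploit the two characterizing features of $I_n(f_n,\cdot)$: it differs from $J_n(f_n,\cdot)$ by an element of $\overline{\mathcal{P}_{n-1}}$ (Proposition~\ref{proposition: properties ort pol 1}, \eqref{equation: projection Jk}), and it lies in $\overline{\mathcal{P}_{n-1}}^\perp$ by definition. If I can show that $P_t$ preserves $\overline{\mathcal{P}_{n-1}}^\perp$, then apply $P_t$ to the decomposition $J_n(f_n,\cdot)=I_n(f_n,\cdot)+Q$ with $Q\in \overline{\mathcal{P}_{n-1}}$ and combine with Theorem~\ref{theorem: classical self-intertwining} to rewrite the left-hand side, and the uniqueness of the orthogonal decomposition in $L^2(\rho)=\overline{\mathcal{P}_{n-1}}\oplus \overline{\mathcal{P}_{n-1}}^\perp$ forces the desired equality.

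The first step is to record that $P_t$ is a well-defined bounded operator on $L^2(\rho)$: reversibility implies in particular invariance of $\rho$, which by Jensen's inequality gives $\|P_t f\|_{L^2(\rho)}\le\|f\|_{L^2(\rho)}$, and moreover reversibility implies that $P_t$ is self-adjoint on $L^2(\rho)$. Next I would show that $P_t$ maps $\mathcal{P}_{n-1}$ into itself. By \eqref{equation: spanned by Jk} it suffices to check this on generators $J_k(f_k,\cdot)$ with $k\le n-1$ and $f_k\in \mathcal{F}_k$; Theorem~\ref{theorem: classical self-intertwining} gives $P_t J_k(f_k,\cdot)=J_k(p_t^{[k]}f_k,\cdot)$, and $p_t^{[k]}f_k$ is bounded, measurable and permutation invariant (the last property following from \eqref{eq:pt-to-ptn} and conservation of particle number), hence still in $\mathcal{F}_k$. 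Continuity of $P_t$ in $L^2(\rho)$ then extends this to $P_t(\overline{\mathcal{P}_{n-1}})\subset\overline{\mathcal{P}_{n-1}}$, and self-adjointness of $P_t$ gives the dual inclusion $P_t(\overline{\mathcal{P}_{n-1}}^\perp)\subset\overline{\mathcal{P}_{n-1}}^\perp$, since for $u\in \overline{\mathcal{P}_{n-1}}^\perp$ and $v\in \overline{\mathcal{P}_{n-1}}$ one has $\langle P_tu,v\rangle=\langle u,P_t v\rangle=0$.

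Now write $J_n(f_n,\cdot)=I_n(f_n,\cdot)+Q$ with $Q\in\overline{\mathcal{P}_{n-1}}$. Applying $P_t$ and invoking Theorem~\ref{theorem: classical self-intertwining} on the left,
\begin{equation*}
    J_n(p_t^{[n]}f_n,\cdot)=P_t I_n(f_n,\cdot)+P_t Q.
\end{equation*}
On the other hand, $J_n(p_t^{[n]}f_n,\cdot)=I_n(p_t^{[n]}f_n,\cdot)+Q'$ for some $Q'\in\overline{\mathcal{P}_{n-1}}$. Subtracting,
\begin{equation*}
    P_t I_n(f_n,\cdot)-I_n(p_t^{[n]}f_n,\cdot)=Q'-P_tQ\in \overline{\mathcal{P}_{n-1}}.
\end{equation*}
But by the preceding step $P_t I_n(f_n,\cdot)\in\overline{\mathcal{P}_{n-1}}^\perp$, and $I_n(p_t^{[n]}f_n,\cdot)\in\overline{\mathcal{P}_{n-1}}^\perp$ by definition, so the left-hand side lies in $\overline{\mathcal{P}_{n-1}}^\perp\cap\overline{\mathcal{P}_{n-1}}=\{0\}$, yielding \eqref{eq: intertwiner 2} $\rho$-almost surely.

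The main obstacle I anticipate is the passage from $\mathcal{P}_{n-1}$ to $\overline{\mathcal{P}_{n-1}}$: one needs both that $P_t$ acts boundedly on $L^2(\rho)$ (handled by invariance of $\rho$ via reversibility) and that the elementary relation from Theorem~\ref{theorem: classical self-intertwining}, established pointwise in $\eta$, survives the $L^2$ closure. Some care is also required for $n=0$, but there $I_0(f_0,\cdot)=f_0$ is constant and the statement is immediate since $P_t\mathbf 1=\mathbf 1$; this small case also anchors the conclusion that $p_t^{[n]}f_n$ remains in $\mathcal{F}_n$, so that both sides of \eqref{eq: intertwiner 2} are well-defined elements of $L^2(\rho)$.
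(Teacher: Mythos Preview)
Your proof is correct and follows essentially the same approach as the paper's: both establish that $P_t$ leaves $\overline{\mathcal P_{n-1}}$ and $\overline{\mathcal P_{n-1}}^\perp$ invariant (via Theorem~\ref{theorem: classical self-intertwining}, boundedness on $L^2(\rho)$, and self-adjointness from reversibility) and then combine this with Proposition~\ref{proposition: properties ort pol 1}. The only cosmetic difference is that the paper packages the invariance statements as commutation of $P_t$ with the orthogonal projection $\Pi_{n-1}$ and writes $P_t I_n(f_n)=P_t(\mathrm{id}-\Pi_{n-1})J_n(f_n)=(\mathrm{id}-\Pi_{n-1})P_tJ_n(f_n)=I_n(p_t^{[n]}f_n)$, whereas you invoke uniqueness of the orthogonal decomposition directly; these are equivalent.
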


\begin{proof} 
    To lighten notation, we drop the second variable $I_n(f_n,\sd)$ and write $I_n(f_n)$ when we refer to the function in $L^2(\rho)$, similarly for $J_n(f_n)$. 
    Let $\Pi_{n-1}$ be the orthogonal projection in $L^2(\rho)$ onto $ {\overline{\mathcal P_{n-1}}}$, and $\mathrm{id}$ the identity operator in $L^2(\rho)$. By Proposition~\ref{proposition: properties ort pol 1}, 
    \[
    	I_n(f_n) = (\mathrm{id} - \Pi_{n-1}) J_n(f_n).
    \] 
    The theorem follows once we know that the semigroup $P_t$ commutes with the projection $\Pi_{n-1}$ i.e.
    \begin{equation}\label{eq: commutation Semigroup and projection}
    	P_t \Pi_{n-1} = \Pi_{n-1} P_t
    \end{equation}
    since then, \eqref{eq: intertwiner 2} is   obtained as follows
    \begin{equation*}
        P_t I_n(f_n) = P_t (\mathrm{id} - \Pi_{n-1}) J_n(f_n) = (\mathrm{id} - \Pi_{n-1}) P_t J_n(f_n) = 
    		(\mathrm{id} - \Pi_{n-1}) J_n(p_t^{[n]}f_n ) = I_n(p_t^{[n]} f_n)
    \end{equation*}
    where we used Proposition~\ref{proposition: properties ort pol 1} in the first and the fourth equality and Theorem~\ref{theorem: classical self-intertwining} in the third equality.
    
    Let $k \leq n-1$ and let us recall the characterization of $\mathcal P_n$ given in Proposition~\ref{proposition: properties ort pol 1}. Using Theorem \ref{theorem: classical self-intertwining} combined with the fact that  $p_t^{[k]} f_k \in \mathcal{F}_k$ for all $f_k\in \mathcal{F}_k$, we have that $P_t J_k(f_k, \sd) = J_k (p_t^{[k]} f_k, \sd) \in \mathcal{P}_{n-1}$. Thus, for all $t\ge 0$ and $n \in \N_0$, 
    $P_t \mathcal{P}_{n-1} \subset \mathcal{P}_n$ and by the boundedness of $P_t$ on $L^2(\rho)$ we obtain \begin{equation}\label{eq: Pt and Pn}
    	P_t \overline{\mathcal{P}_{n-1}} \subset \overline{\mathcal{P}_{n-1}}.
    \end{equation}
    The operator $P_t$ is self-adjoint in $L^2(\rho)$ because of the reversibility of $\rho$. It is a general fact  that a bounded self-adjoint operator that leaves a closed vector space invariant commutes with the orthogonal projection onto that space.  Let us check this fact for our concrete operators and spaces. For $f \in \overline{\mathcal{P}_{n-1}}^\perp$, by  the self-adjointness of $P_t$ on $L^2(\rho)$ and \eqref{eq: Pt and Pn}, we  have, for all $g\in \overline{\mathcal{P}_n}$, that 
    $\int (P_t f) g\dd \rho=\int f(P_t g) \dd \rho=0$ and thus
    \begin{equation}\label{eq: Pt perp Pn}
        P_t \overline{\mathcal{P}_{n-1}}^\perp \subset \overline{\mathcal{P}_{n-1}}^\perp.
    \end{equation}
    We then have, using \eqref{eq: Pt and Pn}, \eqref{eq: Pt perp Pn} and $f - \Pi_{n-1} f\in  \overline{\mathcal{P}_{n-1}}^\perp$ that, for all $f\in L^2(\rho)$,
    \begin{align*}
        \Pi_{n-1}P_t f &= \Pi_{n-1}P_t\Pi_{n-1}f + \Pi_{n-1}P_t (f-  \Pi_{n-1} f) = P_t \Pi_{n-1} f.
    \end{align*}
    This completes the proof of~\eqref{eq: commutation Semigroup and projection} and the proof of the theorem.
\end{proof} 

\section{Examples}
\label{section: examples}

In this section we provide some examples of known consistent and conservative Markov processes, i.e. of processes satisfying Assumption \ref{assumption: consistency and conserve number particles}. Moreover, we also provide the reversible distribution of those processes, when known, and we specify when the assumptions of Theorem \ref{theorem: intertwining projection} are also satisfied. In particular, we recover known self-duality functions of systems of particles hopping on a finite set. In the next section, we introduce a new process, which generalizes the inclusion process (see, e.g., \cite{gkrv} where the SIP is introduced) for which both main theorems apply.

Before doing that, we recall the definition of the Charlier and Meixner polynomials, see e.g. \cite{HypergeometricOrthogonalPolynomials}, which are polynomials orthogonal with respect to the Poisson and negative binomial distribution. Differently from the usual definition in the literature, we normalize orthogonal polynomials to be monic where a polynomial $p(x) = a_0 + a_1 x + \ldots + a_n x^n$ is called monic if $a_n = 1$. These sequences of orthogonal polynomials can be expressed by using the generalized hypergeometric function given by
\begin{align*}
    \hyp{p}{q}{a_1, \ldots, a_p}{b_1, \ldots, b_q}{z} := \sum_{k=0}^\infty \frac{(a_1)^{(k)} \cdots (a_p)^{(k)}}{(b_1)^{(k)} \cdots (b_q)^{(k)}} \frac{z^k}{k!}
\end{align*}
for $a_1, \ldots, a_p, b_1, \ldots, b_q,z \in \R$, $p,q \in \N$, where we remind the reader that $(a)^{(0)} := 1$ and $(a)^{(k)} := a (a+1) \cdots (a+k-1)$ denotes the rising factorial (also called Pochhammer symbol). Similarly, we recall the falling factorial defined by $(a)_k := a (a-1) \cdots (a-k+1)$, $(a)_0 := 1$.

\begin{enumerate}[\normalfont(i)]
    \item The monic Charlier polynomials are given by
    \begin{align*}
        \mathscr{C}_{n}(x; \alpha) := (-\alpha)^n\ \hyp{2}{0}{-n, -x}{-}{-\frac{1}{\alpha}} = \sum_{k=0}^n \binom{n}{k} (-\alpha)^{n-k} (x)_k, \quad x \in \N_0
    \end{align*}
    for $n \in \N_0$ and $\alpha > 0$ and they satisfy the orthogonality relation
    $$
        \sum_{\ell=0}^\infty \mathscr{C}_{n}(\ell; \alpha) \mathscr{C}_{m}(\ell; \alpha) \Poi(\alpha)(\set{\ell}) = \one_{\{n=m\}} \alpha^n n!
    $$
    for $n, m \in \N_0$, i.e., $\mathscr{C}_{n}(\sd; \alpha)$ are orthogonal polynomials with respect to the Poisson distribution $\Poi(\alpha)(\set{\ell}) = e^{-\alpha} \frac{\alpha^\ell}{\ell!}$. $\ell \in \N_0$. 
    \item The monic Meixner polynomials are given by 
    \begin{multline*}
    	\mathscr M_{n}(x; a; p) := (a)^{(n)} \bra{1-\frac{1}{p}}^{-n} \hyp{2}{1}{-x, -n}{a}{1-\frac{1}{p}} \\
    	= \sum_{k=0}^n \binom{n}{k} \bra{1-\frac{1}{p}}^{k-n} (a+k)^{(n-k)} (x)_{k}, \quad x \in \N_0
    \end{multline*}
    for $n \in \N_0$, $a > 0$, $p \in (0, 1)$ and they satisfy the orthogonality relation
    \begin{equation}\label{eq:mortho-univariate}
        \sum_{\ell = 0}^\infty \mathscr M_{n}(\ell; a; p) \mathscr M_{m}(\ell; a; p) \NegBin(a, p)(\set{\ell}) = \one_{\{n=m\}} \frac{p^n n! (a)^{(n)}}{(1-p)^{2n}} 
    \end{equation}
    for $n, m \in \N_0$, i.e., $(\mathscr M_{n}(\sd; a; p))_{n \in \N_0}$ are orthogonal polynomials with respect to the generalized negative binomial distribution 
    \[
    	\NegBin(a, p)(\set{\ell}) = (a)^{(\ell)} \frac{p^\ell }{\ell!} (1-p)^{a},\quad \ell \in \N_0.
    \] 
\end{enumerate}

\subsection{Interacting Particle Systems on a Finite Set}
\label{section: example IPS on finite set}

Let $E$ be a non-empty finite set and identify $\xi \in \mathbf{N}_{<\infty}$ with $(\xi_k)_{k\in E} := (\xi(\set{x}))_{x \in E} \in \N_0^{E}$. Let $\eta$ be a Markov process on $\mathbf{N}_{<\infty}$ satisfying Assumption~\ref{assumption: consistency and conserve number particles} and $\rho$ be a reversible probability measure satisfying Assumption~\ref{assumption moments exist}. We then have that
$D^{\mathrm{cheap}}(\xi, \eta) := \frac{\one_{\{\eta = \xi\}}}{\rho(\set{\xi})}$, for $\eta, \xi \in \mathbf{N}_{<\infty}$ is the so-called cheap or trivial self-duality function (\cite[Eq.~(4.2)]{carinci2021consistent}). In this section we recover well-known self-duality functions of systems of particles hopping on a finite set by applying the intertwiners $J_n$ and $I_n$ to the cheap duality function. 
Note that 
\begin{align*}
    D^{\mathrm{cheap}}_n(\xi, x) := D^{\mathrm{cheap}}\bra{\xi, \sum_{k=1}^n  \delta_{x_k}}, \quad \xi \in \mathbf{N}_{<\infty}, x = (x_1, \ldots, x_n) \in E^n, n \in \N,
\end{align*}
is a duality functions for $(P_t)_{t\geq 0}$ and the $n$-particle semigroup $(p_t^{[n]})_{t\geq 0}$, i.e., $P_t D^{\mathrm{cheap}}_n(\sd, x)(\xi) = p_t^{[n]} D^{\mathrm{cheap}}_n(\xi, \sd)(x)$ for each $\xi \in \N_0^E$, $x \in E^n$, $n \in \N$. Putting $D^{\mathrm{cheap}}_0(\xi,) := D^{\mathrm{cheap}}\bra{\xi,0}$ yields $P_t D^{\mathrm{cheap}}_0(\sd,)(\xi) = p_t^{[0]} D^{\mathrm{cheap}}_0(\xi,)$.

It is well-known that applying an intertwiner to a duality function, for instance $D^{\mathrm{cheap}}_n(\xi, x)$, yields again a self-duality function, see e.g. \cite[Theorem 2.5]{carinci2019orthogonal}
or  \cite[Remark 2.7]{gkrv}.

\begin{proposition}
    Let $\rho = \bigotimes_{k\in E} \rho_k$ where $\rho_k$ are probability measures on $\N_0$ satisfying $\rho_k(\set{\ell}) > 0$ for each $\ell \in \N_0$. Consider for each $\rho_k$ the sequence of monic orthogonal polynomials $(\mathscr{P}_n(\sd, \rho_k))_{n \in \N_0}$. Then, 
    \begin{enumerate}
        \item 
        applying $J_n$ to $D^{\mathrm{cheap}}_n$ yields
        \begin{align*}
            \mathfrak{D}^{\mathrm{cl}}_n(\xi, \eta) := \frac{1}{n!} J_n( D^{\mathrm{cheap}}_n(\xi, \sd), \eta) = \one_{\{\xi(E) = n\}}\prod_{x\in E} \frac{1}{\rho_x(\set{\xi_x})\xi_x!} (\eta_x)_{\xi_x}, \quad n \in \N_0, \xi, \eta \in \mathbf{N}_{<\infty};
        \end{align*}
        \item applying $I_n$ to $D^{\mathrm{cheap}}_n$ yields
        \begin{align*}
            \mathfrak{D}^{\mathrm{ort}}_n(\xi, \eta) :=\frac{1}{n!} I_n( D^{\mathrm{cheap}}_n(\xi, \sd), \eta) = \one_{\{\xi(E) = n\}} \prod_{x \in E} \frac{1}{\rho_x(\set{\xi_x})\xi_x!} \mathscr{P}_{\xi_x}(\eta_x, \rho_x) , \quad n \in \N_0, \xi, \eta \in \mathbf{N}_{<\infty}.
        \end{align*}
    \end{enumerate}
\end{proposition}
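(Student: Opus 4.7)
My plan is to prove the two formulas by direct computation, starting from the definitions of $J_n$ and $I_n$ and exploiting the product structure of $\rho$. The key observation throughout is that, for $\xi$ with $\xi(E) = n$, the cheap dual function $D^{\mathrm{cheap}}_n(\xi, x) = \rho(\set{\xi})^{-1}\one_{\{\sum_k \delta_{x_k} = \xi\}}$ on $E^n$ is, up to symmetrization and a combinatorial factor, the tensor product $\bigotimes_{x\in E} \one_{\{x\}}^{\otimes \xi_x}$.

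For part (1), I would unfold the definition of $J_n$ for $\eta = \sum_{i=1}^m \delta_{y_i}$:
\[
J_n(D^{\mathrm{cheap}}_n(\xi, \sd), \eta) = \frac{1}{\rho(\set{\xi})}\sideset{}{^{\neq}}\sum_{1 \leq i_1, \ldots, i_n \leq m} \one_{\{\delta_{y_{i_1}} + \cdots + \delta_{y_{i_n}} = \xi\}}.
\]
The sum counts ordered $n$-tuples of pairwise distinct indices such that the resulting configuration equals $\xi$. A standard combinatorial identity — choose, for each site $x$, a subset of size $\xi_x$ among the $\eta_x$ particles at $x$, then arrange all $n$ chosen particles — gives count $n! \prod_x \binom{\eta_x}{\xi_x}$ when $\xi(E) = n$ and zero otherwise. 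Dividing by $n!$ and using $\rho(\set{\xi}) = \prod_x \rho_x(\set{\xi_x})$ and $\binom{\eta_x}{\xi_x} = (\eta_x)_{\xi_x}/\xi_x!$ yields the formula for $\mathfrak{D}^{\mathrm{cl}}_n$.

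For part (2), I would combine Proposition~\ref{proposition: properties ort pol 2} with the product structure of $\rho$. Since $E$ is finite and $\rho = \bigotimes_x \rho_x$, the singletons $\set{x}_{x\in E}$ are pairwise disjoint and $\rho$ is the distribution of a completely independent point process. For $\xi$ with $\xi(E) = n$, a direct count of permutations shows
\[
\one_{\{\sum_k \delta_{x_k} = \xi\}}(x_1,\ldots,x_n) = \frac{n!}{\prod_x \xi_x!}\,\mathrm{Sym}\bra{\bigotimes_{x\in E} \one_{\{x\}}^{\otimes \xi_x}}(x_1,\ldots,x_n).
\]
Since $\eta^{\otimes n}$ is already symmetric, $I_n$ depends only on the symmetrization of its first argument, and so by Proposition~\ref{proposition: properties ort pol 2} applied to $A_x = \set{x}$,
\[
I_n(D^{\mathrm{cheap}}_n(\xi,\sd), \eta) = \frac{n!}{\rho(\set{\xi}) \prod_x \xi_x!}\prod_{x\in E} I_{\xi_x}\bra{\one_{\{x\}}^{\otimes \xi_x}, \eta}.
\]

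The main — though mild — point I want to isolate is the identification $I_{\xi_x}(\one_{\{x\}}^{\otimes \xi_x}, \eta) = \mathscr{P}_{\xi_x}(\eta_x, \rho_x)$. By definition the left-hand side is the orthogonal projection in $L^2(\rho)$ of $\eta \mapsto \eta_x^{\xi_x}$ onto $\overline{\mathcal{P}_{\xi_x-1}}^\perp$, and $\mathscr{P}_{\xi_x}(\sd,\rho_x)$ is the monic univariate orthogonal polynomial. Since both $\eta_x^{\xi_x}$ and $\mathscr{P}_{\xi_x}(\eta_x,\rho_x)$ are monic of degree $\xi_x$ in $\eta_x$, they differ by an element of $\mathcal P_{\xi_x-1}$, so it suffices to verify that $\mathscr{P}_{\xi_x}(\eta_x,\rho_x) \in \overline{\mathcal P_{\xi_x-1}}^\perp$. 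For any monomial $\prod_k \eta_k^{a_k}$ with $\sum_k a_k \leq \xi_x - 1$, one has $a_x \leq \xi_x -1$, and the product structure of $\rho$ lets the integral factor across sites; the factor at $x$ vanishes by univariate orthogonality of $\mathscr{P}_{\xi_x}(\sd,\rho_x)$ against $\eta_x^{a_x}$ under $\rho_x$. Plugging this in and collecting constants gives precisely $\mathfrak{D}^{\mathrm{ort}}_n$. No real obstacle is expected; the only care needed is in this last identification, which is where the hypothesis that $\rho$ is a product measure is actually used.
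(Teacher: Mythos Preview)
Your proposal is correct and follows essentially the same approach as the paper. For part (1), both arguments rest on the symmetrization identity
\[
\one_{\{\sum_k \delta_{x_k}=\xi\}} = \one_{\{\xi(E)=n\}}\,\frac{n!}{\prod_x \xi_x!}\,\mathrm{Sym}\Bigl(\bigotimes_{x\in E}\one_{\{x\}}^{\otimes \xi_x}\Bigr),
\]
which you phrase as a direct count and the paper writes out explicitly before invoking the tensor-product formula for $J_n$. For part (2), the only packaging difference is that the paper cites the orthogonal decomposition $\mathbf P_n=\bigoplus_{d_1+\cdots+d_N=n}\mathrm{span}\{\mathscr P_{d_1}(\cdot,\rho_1)\cdots\mathscr P_{d_N}(\cdot,\rho_N)\}$ directly and reads off the projection of $\eta_1^{\xi_1}\cdots\eta_N^{\xi_N}$, whereas you reach the same product via the factorization Proposition~\ref{proposition: properties ort pol 2} together with the univariate identification $I_{\xi_x}(\one_{\{x\}}^{\otimes \xi_x},\eta)=\mathscr P_{\xi_x}(\eta_x,\rho_x)$; these are two equivalent ways to exploit the product structure of $\rho$.
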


As a consequence, Theorem~\ref{theorem: classical self-intertwining} and Theorem~\ref{theorem: intertwining projection} yield that $ \mathfrak D^{\mathrm{cl}}_n$ and $  \mathfrak D^{\mathrm{ort}}_n$ satisfy \eqref{equation: selfdual}, i.e., they are self-duality functions for $(P_t)_{t\geq 0}$ for each $n \geq \N_0$. Moreover, summing over $n$ in $ \mathfrak D^{\mathrm{cl}}_n$ and $  \mathfrak D^{\mathrm{ort}}_n$, we obtain the self-duality functions
\begin{align}
    \label{equation: classical duality function}
    \mathfrak{D}^{\mathrm{cl}}(\xi, \eta) := &\prod_{x\in E} \frac{1}{\rho_x(\set{\xi_x})\xi_x!} (\eta_x)_{\xi_x}, \quad \xi, \eta \in \mathbf{N}_{<\infty}, \\
    \label{equation: orthogonal duality function}
    \mathfrak{D}^{\mathrm{ort}}(\xi, \eta) := &\prod_{x \in E} \frac{1}{\rho_x(\set{\xi_x})\xi_x!} \mathscr{P}_{\xi_x}(\eta_x, \rho_x), \quad \xi, \eta \in \mathbf{N}_{<\infty}.
\end{align}

\begin{proof}
    Without loss of generality, let $E = \set{1, \ldots, N}$ and fix $\xi \in \N_0^N$, $n \in \N$. 
    \begin{enumerate}
        \item
        Note that 
        \begin{align}
            \label{equation: symmetrization indicator}
            \one_{\set{\xi = \delta_{x_1} + \ldots + \delta_{x_n}}} = \one_{\{\xi(E) = n\}} \frac{n!}{\xi_1! \cdots \xi_N !} \widetilde{ \one_{\set{1}}^{\otimes \xi_1} \otimes \cdots \otimes \one_{\set{N}}^{\otimes \xi_N}}(x_1, \ldots, x_n), \quad x_1, \ldots, x_n \in E
        \end{align}
        where $\widetilde{ \one_{\set{1}}^{\otimes \xi_1} \otimes \cdots \otimes \one_{\set{N}}^{\otimes \xi_N}}$ denotes the symmetrization of $\one_{\set{1}}^{\otimes \xi_1} \otimes \cdots \otimes \one_{\set{N}}^{\otimes \xi_N}$. Hence, using \eqref{eq: relation factorial from tensor products}, we obtain
        \begin{align*}
            \frac{1}{n!} J_n( D^{\mathrm{cheap}}_n(\xi, \sd), \eta) &= \frac{\one_{\{\xi(E) = n\}}}{\rho(\set{\xi}) \xi_1! \cdots \xi_N !} \int \one_{\set{1}}^{\otimes \xi_1} \otimes \cdots \otimes \one_{\set{N}}^{\otimes \xi_N} \dx \eta^{(n)} \\
            &= \one_{\{\xi(E) = n \}} \prod_{x=1}^N \frac{1}{\rho_x(\set{\xi_x}) \xi_x!}  (\eta_x)_{\xi_x}.
        \end{align*}
        for each $\xi \in \N_0^N$. 
    
        \item Let $\mathbf P_n := \overline{\mathcal P_n}\cap \overline{\mathcal P_{n-1}}^\perp$.
        By the orthogonal decomposition
        \begin{align*}
            \mathbf{P}_n = \bigoplus_{d_1 + \ldots + d_N = n } \mathrm{span} \set{\mathscr{P}_{d_1}(\sd, \rho_1) \otimes \cdots \otimes \mathscr{P}_{d_N}(\sd, \rho_N)}
        \end{align*}
        we obtain that the projection of $\N_0^N \ni \eta \mapsto \int \one_{\set{1}}^{\otimes \xi_1} \cdots \one_{\set{N}}^{\otimes \xi_N} \dx \eta^{\otimes n} = \eta_1^{\xi_1} \cdots \eta_N^{\xi_N}$ onto $\mathbf{P}_n$ is equal to $\eta \mapsto \mathscr{P}_{\xi_1}(\eta_1, \rho_1) \cdots \mathscr{P}_{\xi_N}(\eta_1, \rho_N)$. Therefore, using \eqref{equation: symmetrization indicator}
        \begin{align*}
            \frac{1}{n!} I_n( D^{\mathrm{cheap}}_n(\xi, \sd), \eta) &= \frac{\one_{\{\xi(E) = n\}}}{\rho(\set{\xi})\xi_1! \cdots \xi_N!} I_n(\one_{\set{1}}^{\otimes \xi_1} \otimes \cdots \otimes \one_{\set{N}}^{\otimes \xi_N}, \eta) \\
            &= \one_{\{\xi(E) = n\}} \prod_{x =1}^N \frac{1}{\rho_x(\set{\xi_x})\xi_x!} \mathscr{P}_{\xi_x}(\eta_x, \rho_x)
        \end{align*}
        for each $\eta \in \N_0^N$. 
    \end{enumerate}
\end{proof}

We consider three prominent examples of consistent and conservative Markov processes on $\N_0^E$. For a characterization of consistent particle system on countable $E$ we refer to %\cite[Theorem 3.2]{carinci2019consistent} 
\cite[Theorem 3.3]{carinci2021consistent}. Let $\abs{E} \geq 2$, $c=\{c_{\{x,y\}}, x,y \in E \}$ be a set of symmetric and non-negative conductances, such that $(E,c)$ is connected and let $(\alpha_y)_{y \in E} \subset \N$. Then, for $\sigma \in \set{-1, 0, 1}$, the Markov process with infinitesimal generator acting on functions $f : \mathbf{N}_{<\infty} \to \R$ as
\begin{align*}
    Lf(\eta) &= \sum_{x,y \in E} c_{\set{x,y}} \bra{f(\eta - \delta_{y} + \delta_x) - f(\eta) } ( \alpha_y + \sigma \eta(\set{y})) \eta(\set{x}), \quad \eta \in \mathbf{N}_{<\infty}
\end{align*}
is a consistent and conservative process. In particular, for $\sigma = -1$, we obtain the inhomogeneous partial exclusion process (SEP) (see, e.g., \cite[Eq.~(1.3)]{floreani2020hydrodynamics}), for $\sigma=0$ a system of independent random walkers (IRW) and for $\sigma=1$ the inhomogeneous inclusion process SIP (see, e.g., \cite[Eq.~(2.2)]{floreani2020orthogonal}). 
  
By a simple detailed balance computation one can show that, for those processes, there exists a one parameter family $\{\rho_\theta, \ \theta\in \Theta\}$ with $\Theta=(0,1]$ for $\sigma=-1$ and $\Theta=(0,\infty)$ for $\sigma\in\{0,1\}$ of reversible measures, namely (cf. \cite[Eq.~(3.1)]{floreani2020orthogonal}) $\rho_\theta:=\bigotimes_{x\in E} \rho_{x,\theta}$ with
\begin{align*}
    \rho_{x,\theta} = \begin{cases} \Bin(\alpha_x,\theta) &\text{if } \sigma = -1\\[.15cm]
    \Poi(\alpha_x \theta) &\text{if } \sigma = 0\\[.15cm]
    \NegBin\bra{\alpha_x, \frac{\theta}{1+\theta}} &\text{if } \sigma = 1\ .
    \end{cases}
\end{align*}
Using that $ \rho_{x,\theta}(\set{n}) = \frac{w_x(n)}{z_{x,\theta}}\, \bra{\frac{\theta}{1+\sigma \theta} }^{n}  \frac{1}{n!}$ where
\begin{align*}
    w_x(n) := \begin{cases}
     (\alpha_x)_{n} &\text{if\ } \sigma = -1\\
     \alpha_x^n &\text{if } \sigma = 0\\
     (\alpha_x)^{(n)} &\text{if } \sigma = 1\
     \end{cases}
    \text{ and }
     z_{x,\theta} := \begin{cases}
     (1-\theta)^{-\alpha_x}&\text{if } \sigma = -1\\
     e^{\alpha_x \theta} &\text{if } \sigma = 0\\
     (1+\theta)^{\alpha_x} &\text{if } \sigma = 1\ 
     \end{cases}
\end{align*}
in \eqref{equation: classical duality function} we obtain 
\begin{align*}
    \mathfrak D^{\mathrm{cl}}(\xi, \eta) 
    &= \left(\frac{\theta}{1+\sigma \theta} \right)^{-\xi( E ) } \left(\prod_{x\in E} z_{x,\theta}\right) \prod_{x\in E} \frac{(\eta_x)_{\xi_x}}{w_x(\xi_x)}
\end{align*}
which are the classical duality functions for $(\eta_t)_{t \geq 0}$ (see, e.g., \cite[Eq.~(2.16)]{floreani2020orthogonal}). Notice that, due to Assumption~\ref{assumption: consistency and conserve number particles}~(\ref{item: conserving number particles}), the term $\bra{\frac{\theta}{1 + \sigma \theta} }^{-\xi( E ) } \bra{\prod_{k\in E} z_{k,\theta}}$ is constant in time and, thus, it does not play any role in the duality relation. 

For these systems, the self-dualities provided by \eqref{equation: orthogonal duality function} coincide (up to a multiplicative constant depending on the total number of particles which is a conserved quantity) to the orthogonal dualities studied in \cite{redig_factorized_2018}, \cite{franceschini2019stochastic} and \cite{floreani2020orthogonal} which are given by product of Charlier polynomials for $\sigma=0$, products of Meixner polynomials for $\sigma=-1$ and products of Krawtchouk polynomials (see, e.g., \cite[Eq.~(9.11.1)]{HypergeometricOrthogonalPolynomials}) for $\sigma=-1$. Indeed, considering, for instance, the system of independent random walks, the self-duality function of \eqref{equation: orthogonal duality function} turns into
\begin{align*}
    \mathfrak{D}^{\mathrm{ort}} (\xi, \eta) &= \prod_{k\in E} \frac{1}{\rho_k(\set{\xi_k}) \xi_k!} \mathscr{C}_{\xi_k}(\eta_k, \alpha_k) \\
    &= \prod_{k\in E} \frac{1}{e^{-\alpha_k} \alpha_k^{\xi_k}} (-\alpha_k)^{\xi_k} \hyp{2}{0}{-\xi_k, -\eta_k}{-}{-\frac{1}{\alpha_k}} \\
    &= e^{\alpha(E)} (-1)^{\xi(E)} \prod_{k\in E}  \hyp{2}{0}{-\xi_k, -\eta_k}{-}{-\frac{1}{\alpha_k}} 
\end{align*}
coinciding with the orthogonal self-dualities given in literature mentioned above. The same holds also for the exclusion and the inclusion process.

\subsection{Independent Markov Processes}

Every system of independent Markov processes (e.g.\ the free Kawasaki dynamics \cite{kondratiev2009hydrodynamic}, independent Brownian motions) is consistent and conservative. 
For independent particles, our theorems results allow us to recover known results on intertwining with Lenard's $K$-transform and multiple stochastic integrals, see \cite{kondratiev2009hydrodynamic, surgailis84} and the references therein. Our contribution is the proof that these intertwining relations correspond exactly to classical and orthogonal dualities for independent random walkers on lattices from \cite[Proposition~2.9.4]{DeMasiPresutti} and \cite[Theorem~4]{franceschini2019stochastic}.

Let $(p_t)_{t\geq 0}$ be a Markov transition function on $(E,\mathcal E)$. The transition function for $n$ independent labelled particles with one-particle evolution governed by $(p_t)_{t\geq 0}$ has transition function $p_t^{\otimes n}$ uniquely determined by
\[
	p_t^{\otimes n}\bigl( x_1,\ldots, x_n;A_1\times\cdots \times A_n) = \prod_{i=1}^n p_t(x_i; A_i)\quad x_1,\ldots, x_n\in E,\ A_1,\ldots, A_n\in \mathcal E. 
\] 
The family of transition functions $(p_t^{\otimes n})_{t\geq 0}$, $n\in \N$ is strongly consistent and therefore the associated conservative transition function $(P_t)_{t\geq 0}$ (see \eqref{eq:forgetting-labels}) is consistent.

 Hence,  Theorem~\ref{theorem: classical self-intertwining} applied to the process  
$(\eta_t)_{t\geq 0}$ with transition function $(P_t)_{t\geq 0}$ yields the self-intertwining relation 
  $P_t J_n(f_n, \sd)(\eta) = J_n(p_t^{[n]} f_n, \eta)$ or more concretely, 
\[
	\E_{\eta} \Bigl[ \int f_n \dd \eta_t^{(n)}\Bigr] = \int (p_t^{\otimes n} f_n) \dd \eta. 
\] 
The relation holds true for all $t\geq 0$, all initial values $\eta\in \mathbf N_{<\infty}$, and all $f_n\in \mathcal F_n$. As noted in~\eqref{eq:self-J}, it implies that Lenard's $K$-transform and the semigroup $(P_t)_{t\geq 0}$ commute. Hence, for free Kawasaki dynamics, we recover a  relation from  \cite[Section 3.2]{kondratiev2009hydrodynamic}.

If we find a $\sigma$-finite reversible measure $\lambda$ for the one-particle dynamics $(p_t)_{t\geq 0}$, then the distribution of a Poisson process with intensity measure $\lambda$, denoted by $\pi_\lambda$, is reversible for $(\eta_t)_{t\geq 0}$. This property is a version of Doob's Theorem (cf.~\cite[Theorem 2.9.5]{DeMasiPresutti}) and of the displacement theorem (cf.~\cite{kingman1992poisson}). Moreover, $\lambda^{\otimes n}$ is reversible for $(p_t^{\otimes n})_{t\geq 0}$. For finite $\lambda$, the assumptions of Theorem \ref{theorem: intertwining projection} are satisfied and the self-intertwining relation $P_t I_n(f_n, \sd)(\eta) = I_n(p_t^{\otimes n} f_n, \eta)$ holds for $\pi_\lambda$-almost all $\eta \in \mathbf{N}_{<\infty}$, all $f_n \in \mathcal{F}_n$ and all $t \geq 0$. 

The construction of the generalized orthogonal polynomial with respect to the Poisson point process is standard and it is well-known that the orthogonality relation
\begin{align*}
    \int I_n(f_n, \sd) I_m(g_m, \sd) \dx \pi_\lambda = \one_{\set{n = m}} n! \int f_n g_m \dd \lambda^{\otimes n}
\end{align*}
holds for bounded $f_n \in \mathcal{F}_n, g_m \in \mathcal{F}_m$, $n, m \in \N_0$, with $\int f_0 g_0 \dd \lambda^{\otimes 0} := f_0 g_0$, and they generalize the Charlier polynomial in the following sense (see, e.g., \cite[Eq.~(3.3)]{Last2011}, 
\begin{equation} \label{eq:charlier-product}
    I_n(\one_{B_1}^{\otimes d_1} \otimes \cdots \otimes \one_{B_N}^{\otimes d_N}, \eta) = \prod_{k=1}^N \mathscr{C}_{d_k}(\eta(B_k); \lambda(B_k))
\end{equation}
for $\pi_\lambda$-almost all $\eta\in \mathbf N_{<\infty}$, $d_1 + \ldots + d_N = n$ and all pairwise disjoint $B_1, \ldots, B_N \in \mathcal{E}$. Yet another viewpoint is that $I_n(f_n,\sd)$ are multiple stochastic integrals with respect to the compensated Poisson measure $\eta-\lambda$. The reader interested in the relation between the generalized orthogonal polynomials $I_n(f_n,\sd)$ and multiple Wiener-Itô integrals, chaos decompositions, and Fock spaces is referred to \cite{Last2016}, \cite{meyer2006quantum} and \cite{Lytvynov2003}. 

In the language of multiple stochastic integrals, the intertwining  relation from Theorem~\ref{theorem: intertwining projection} says that applying the semigroup to the $n$-fold integral of $f_n$ is the same as the $n$-fold integral of $p_t^{\otimes n} f_n$.

\subsection{The Howitt--Warren Flow and a Consistent Family of Sticky Brownian Motions}

\label{section:ex-strongly-consistent}

As noted in Section~\ref{section:setting}, every strongly consistent family $(p_t^{[n]})_{t\geq 0}$, $n\in \N$, of transition functions induces a consistent semigroup $(P_t)_{t\geq 0}$. Strongly consistent families have been studied in the context of stochastic flows: Le Jan and Raimond \cite{LeJanRaimond} investigate a one-to-one correspondence between strong consistency families and stochastic flows of kernels.

A particular and well studied case is the Howitt--Warren flow. It is a stochastic flow of kernels whose 
$n$ point motions is given by a family of $n$ interacting Brownian motions that interact, roughly, by sticking together for a while when they meet. The interacting diffusions can be constructed, for example, as solutions to a  martingale problem \cite{howitt-warren2009}. Theorem~\ref{theorem: classical self-intertwining} applies to the semigroup $(P_t)_{t\geq 0}$ induced by the strongly consistent family of transitions functions $(p_t^{[n]})_{t\geq 0}$, $n\in \N$,  for $n$ sticky Brownian motions.

The dynamics of sticky Brownian motion depends on a choice of parameters and includes diffusions with a drift. For zero drift and a special choice of parameters, Brockington and Warren \cite{brockington2021bethe} prove, using a Bethe ansatz, an explicit formula for transition probabilities and the reversibility of the $n$-point motions with respect to some explicit measure $m_\theta^{(n)}$. They work on the Weyl chambers $\bar{W}^n:=\{x\in\R^n: x_1\ge \ldots \ge x_n\}$ and show that the transition function is of the form $p_t^{[n]}(x,\dd y) = u_t^{(n)}(x,y) m_\theta^{(n)}(\dd y)$ for some symmetric function $u_t^{(n)}(x,y)=u_t^{(n)}(y,x)$. With this the self-intertwining relation from Theorem~\ref{theorem: classical self-intertwining} can be rewritten as 
\begin{equation}
	\E_{\eta}\left[ \int_{\bar{W}^r} f_r( y) \eta^{(r)}(\dd y) \right]=\int_{\bar{W}^r} f_r( y)\left[\int_{\bar{W}^r} u^{(r)}_t(y, x) \eta^{(r)}(\dd x)\right]  m^{(r)}_\theta
(\dd y).
\end{equation}
Thus we obtain an identity analogous to \eqref{vivi} and~\eqref{eq:gen2}.

As the reversible measures $m_\theta^{(n)}$ from \cite{brockington2021bethe} have infinite total mass, it is not possible to construct from them a reversible measure supported on configurations of finitely many particles and Theorem~\ref{theorem: intertwining projection} on orthogonal intertwining relations is not applicable. 
We leave the study of the orthogonal self-intertwining relation for the system of sticky Brownian motions for future research.

For other examples of strongly consistent families, beyond sticky Brownian motions, we refer to \cite{LeJanRaimond} and \cite{schertzer_sun_swart_2016}. 

\section{Generalized Symmetric Inclusion Process}
\label{section: gSIP}
As an example of interacting system of particles jumping on a general Borel space $(E, \mathcal E)$, we introduce here a new process which is a natural extension of the SIP. Coherently with the setting of this paper, we consider the finite particle case only. Extension to the infinite particle case is not part of the scope of the present work and it is left for future research.

The SIP on countable sets was introduced in \cite{giardina2007duality} as a dual process of a model of heat conduction, which shares some features with the well-studied KMP model (see \cite{kipnis1982heat}). The process also appears, with a different interpretation, in mathematical population genetics. Indeed, in \cite[Section~5]{CARINCI2015941}, it is proved that the generator of the SIP coincide with the generator of an instance of the Moran model, which is dual to the Wright-Fisher diffusion process. Moreover, the scaling limit of the Moran model is the celebrated Fleming-Viot superprocess (see \cite{Etheridge00} and references therein) which has been studied using duality as well.

\subsection{Introducing the gSIP}

Let $\alpha$ be a finite, non-zero measure on $E$ and $c:E\times E\to \R_+$ a bounded symmetric function with $c(x,x)=0$ for all $x \in E$. The generalized symmetric inclusion process (gSIP) is the process with formal generator 
\begin{equation}\label{eq:sip-generator}
	L f(\eta) = \iint \bigl( f(\eta- \delta_x+\delta_y) - f(\eta)\bigr) c(x,y) (\alpha + \eta)(\dd y) \eta(\dd x). 
\end{equation} 
It is a continuous-time jump process with jump kernel 
\begin{equation} \label{eq:sip-jumpkernel}
	Q(\eta,B) = \iint \one_B(\eta- \delta_x+\delta_y) c(x,y) (\alpha + \eta)(\dd y) \eta(\dd x)
\end{equation} 
and it can be viewed, when $E=\R^d$, as a particular case of a  Kawasaki dynamics (see, e.g., \cite{KondratievLytvynov2007}).
Notice that $Q(\eta,E) <\infty$ for finite measures $\alpha$ and finite configurations $\eta\in \mathbf N_{<\infty}$. Accordingly the process $(\eta_t)_{t\geq 0}$ can be constructed with the usual jump-hold construction and the semigroup $(P_t)_{t\geq 0}$ is the minimal solution of the backward Kolmogorov equation, see Feller~\cite{Feller}.

The process is non-explosive because the particle number is conserved and $\sup\{Q(\eta,E): \eta(E) = n\}<\infty$ for every particle number $n\in \N_0$. Therefore the minimal solution $(P_t)_{t\geq 0}$ is a Markov semigroup ($P_t(\eta,E) = 1$ rather than $\leq 1$) and it is in fact the unique solution of the backward Kolmogorov equation. 

\begin{remark}
    \begin{enumerate}
        \item As we will see later, the gSIP $(\eta_t)_{t\geq 0}$ has the following connection to the well-known SIP of particles hopping on a finite set. 
        Let $A_1, \ldots, A_m \in \mathcal{E}$, $m \in \N$ be a partition of $E$ and let $c$ be constant on $A_i \times A_j$ and equal to $d_{ij}$ for each $i,j$. Then, the process $(\eta_t(A_1), \ldots, \eta_t(A_m))$ starting at $\eta_0 \in \mathbf{N}_{<\infty}$ behaves like a SIP on the finite set $\set{1, \ldots, m}$ with initial configuration $(\eta_0(A_1), \ldots, \eta_0(A_m))$ and transition rates $d_{ij}$. 
        \item Notice that a direct generalization of the Exclusion process analogous to the gSIP, would not be meaningful in general, because the probability to jump on already occupied points is zero whenever the jumping kernel of the single particle is not atomic. Thus an exclusion rule miming the one in the discrete setting cannot be modelled in the continuum. 
        
        \item The dynamics can be described informally as follows. Starting from an initial configuration $\eta_0 = \eta$ with $n= \eta(E)$ points $x_1,\ldots, x_n$, set 
    	\[
    		q_{i0}:= \int c(x_i,y) \alpha(\dd y),\quad q_{ij} := c(x_i,x_j),\quad z_i:= \sum_{j=0}^n q_{ij}, \quad z:= \sum_{i=1}^n z_i
    	\] 
        and do the following:
        \begin{enumerate}[\normalfont(i)]
        	\item Wait for an exponential time with parameter $Q(\eta,E)= z$. 
        	\item When time is up, choose one out of the $n$ points $x_1,\ldots, x_n$ of $\eta$. The point $x_i$ is chosen with probability $z_i/z$. Move the chosen point $x=x_i$ to a new location $y$: 
        	\begin{itemize}
        		\item With probability $q_{ij} /z_i$, the new location $y$ is equal to $y=x_j$.
        		\item With probability $q_{i0}/z_i$, the new location $y$ is chosen according 
        	to the probability measure $\alpha(E)^{-1} \alpha(\dd y)$.
        	\end{itemize}		
        \end{enumerate} 
        Then, repeat. The resulting process $(\eta_t)_{t\geq 0}$ and the associated semigroup $(P_t)_{t\geq 0}$, given by the minimal solution to the backward Kolmogorov equation, is in fact the unique solution. 
    \end{enumerate}
\end{remark}

\subsection{Reversibility and Intertwiners for the gSIP}

Fix $p\in (0,1)$. A \emph{Pascal point process} with parameters $p$ and $\alpha$ is a point process with the following properties:
\begin{enumerate}[\normalfont(i)]
	\item
	\label{item: complete independent}
	If $B_1,\ldots, B_m\in \mathcal E$ are disjoint then $\zeta(B_1),\ldots, \zeta(B_m)$ are independent. 
	\item 
	\label{item: negative binomial distribution}
	For every $B\in \mathcal E$, the distribution of $\zeta(B)$ is given by a negative binomial law:
	\begin{align*}
		\P\bigl(\zeta(B) = k\bigr) &= \bigl(1-p\bigr)^{\alpha(B)} \alpha(B) \bigl(\alpha(B)+1\bigr)\cdots \bigl(\alpha(B) + k-1\bigr)\, \frac{p^k}{k!}, \quad k\in \N_0. 
	\end{align*} 
	For $k=0$, the equation is to be read as $\P(\zeta(B) =0) = (1-p)^{\alpha(B)}$. 
\end{enumerate} 
The \emph{Pascal distribution} is the distribution of a Pascal point process and it is a direct generalization of the product measure of negative binomial distributions that is reversible for SIP. Indeed, the measure $\otimes_{x \in E} \NegBin(\alpha_x, p)$, $\alpha_x > 0$ can be seen as a Pascal distribution. Property \eqref{item: complete independent} follows immediately whereas \eqref{item: negative binomial distribution} follows from the fact that if $n_x\sim\NegBin(\alpha_x, p)$ and $n_y\sim \NegBin(\alpha_y, p)$, with $n_x$ and $n_y$ independent for $x\ne y\in E$, then $n_x+n_y\sim \NegBin(\alpha_x+\alpha_y, p)$.

\begin{theorem}
    \label{theorem: gSIP}
    Let $\alpha$ be a finite measure on $E$. Then
    \begin{enumerate}[\normalfont(i)]
        \item the generalized symmetric inclusion process with formal generator~\eqref{eq:sip-generator} is a consistent Markov process and thus the intertwining relation \eqref{eq: self intertwining Jr} with generalized falling factorials holds;
        \item for every $p \in (0, 1)$, the Pascal distribution $\rho$ with parameters $\alpha$ and $p$ is reversible and thus, the intertwining relation \eqref{eq: intertwiner 2} with generalized orthogonal polynomials holds. 
	\end{enumerate}
\end{theorem}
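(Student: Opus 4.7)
My plan is to verify the generator identity $L\mathcal{A}f=\mathcal{A}Lf$ on $\mathcal{G}$, which is equivalent to consistency of the semigroup by \cite[Theorem~2.7 and Theorem~3.2]{carinci2021consistent}; non-explosion is automatic since the particle number is conserved and the jump rate is bounded on each $n$-particle sector. Splitting $(\alpha+\eta)(\dd y)=\alpha(\dd y)+\eta(\dd y)$ decomposes $L=L_\alpha+L_{\mathrm{int}}$. Here $L_\alpha$ is the generator of $n$ independent Markov processes with one-particle kernel $c(x,y)\alpha(\dd y)$, hence strongly consistent and in particular consistent (Section~\ref{section:ex-strongly-consistent}). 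For the interaction part, writing $\eta=\sum_{i=1}^n\delta_{w_i}$ gives
\[
    L_{\mathrm{int}}f(\eta)=\sum_{i,j=1}^n c(w_i,w_j)\bigl[f(\eta-\delta_{w_i}+\delta_{w_j})-f(\eta)\bigr].
\]
Expanding $L_{\mathrm{int}}\mathcal{A}f(\eta)$ and $\mathcal{A}L_{\mathrm{int}}f(\eta)$ yields two triple sums over $(i,j,k)$ that agree on the common index range; the mismatches are boundary contributions (from $k=j$ in the former and $k\in\{i,j\}$ in the latter) that cancel using $c(w_i,w_i)=0$ and the symmetry of $c$. Together with the consistency of $L_\alpha$ this gives $L\mathcal{A}=\mathcal{A}L$, and Theorem~\ref{theorem: classical self-intertwining} delivers~\eqref{eq: self intertwining Jr}.

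\textbf{Part (ii): Reversibility of the Pascal distribution.} The centerpiece is a Mecke-type identity
\[
    \int\!\!\int F(x,\eta)\eta(\dd x)\rho(\dd\eta)=p\int\!\!\int F(x,\eta+\delta_x)(\alpha+\eta)(\dd x)\rho(\dd\eta)
\]
for bounded measurable $F$, valid when $\rho$ is the Pascal distribution with parameters $(\alpha,p)$. By complete independence of $\rho$ and a monotone-class argument, it suffices to treat cylindrical $F(x,\eta)=\one_{A_1}(x)h(\eta(A_1),\ldots,\eta(A_m))$ with pairwise disjoint $A_j$, which in turn reduces to the univariate identity $\E[N h(N)]=p\,\E[(a+N)h(N+1)]$ for $N\sim\NegBin(a,p)$; the latter is a direct computation from the probability mass function. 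Applying the Mecke identity to $\eta(\dd x)$ in
\[
    \int (Lf)g\dd\rho=\int g(\eta)\iint\bigl[f(\eta-\delta_x+\delta_y)-f(\eta)\bigr]c(x,y)(\alpha+\eta)(\dd y)\eta(\dd x)\rho(\dd\eta),
\]
then splitting $(\alpha+\eta+\delta_x)(\dd y)=(\alpha+\eta)(\dd y)+\delta_x(\dd y)$ and using $c(x,x)=0$ to kill the diagonal contribution transforms the right-hand side into
\[
    p\iiint\bigl[f(\eta+\delta_y)-f(\eta+\delta_x)\bigr]g(\eta+\delta_x)c(x,y)(\alpha+\eta)(\dd y)(\alpha+\eta)(\dd x)\rho(\dd\eta).
\]
Swapping $x\leftrightarrow y$ in the $f(\eta+\delta_y)g(\eta+\delta_x)$ piece and invoking the symmetry of $c$ makes the expression symmetric under $f\leftrightarrow g$, so $\int(Lf)g\dd\rho=\int f(Lg)\dd\rho$. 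Since $\zeta(E)\sim\NegBin(\alpha(E),p)$ has all moments finite, Assumption~\ref{assumption moments exist} holds and Theorem~\ref{theorem: intertwining projection} yields~\eqref{eq: intertwiner 2}.

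\textbf{Expected obstacle.} The bookkeeping in (i) is mechanical once the generator is split, and the symmetrization argument in (ii) is clean once the Mecke identity is in place. The substantive step is therefore the Mecke formula for the Pascal distribution on a general Borel space; the cleanest route passes through its Laplace functional $\mathcal{L}(f)=\exp\bigl(\int\log\tfrac{1-p}{1-p e^{-f}}\dd\alpha\bigr)$, which simultaneously confirms that $\rho$ is well-defined for every finite measure $\alpha$.
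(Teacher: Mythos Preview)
Your argument for part~(i) is correct and coincides with the paper's: the same decomposition $L=L_\alpha+L_{\mathrm{int}}$, the same appeal to independent-particle consistency for $L_\alpha$, and the same cancellation in $[\mathcal{A},L_{\mathrm{int}}]$ via $c(x,x)=0$ and the symmetry of $c$.

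For part~(ii) your proof is correct but genuinely different from the paper's. The paper proves the detailed-balance relation $\rho\otimes Q(\mathscr{A}\times\mathscr{B})=\rho\otimes Q(\mathscr{B}\times\mathscr{A})$ by a reduction argument: it approximates $c$ by step functions $\sum_{i,j}d_{ij}\one_{C_i}\otimes\one_{C_j}$, restricts $\mathscr{A},\mathscr{B}$ to cylinder events on a common disjoint family $A_1,\dots,A_r$, and observes that the resulting identity is literally the detailed-balance relation of the discrete SIP on $\{1,\dots,r\}$ with its product negative-binomial stationary measure. Your route instead isolates the structural fact that the Pascal distribution has Papangelou intensity $p(\alpha+\eta)$, encoded in the Mecke-type identity $\int\!\!\int F(x,\eta)\,\eta(\dd x)\,\rho(\dd\eta)=p\int\!\!\int F(x,\eta+\delta_x)\,(\alpha+\eta)(\dd x)\,\rho(\dd\eta)$, and then obtains $\int(Lf)g\,\dd\rho=\int f(Lg)\,\dd\rho$ by a clean symmetrization in $(x,y)$. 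The paper's approach is more elementary and makes the link to the lattice SIP explicit; yours is more conceptual, separates the point-process input (the Mecke formula) from the dynamics, and avoids any discretization of $c$. Either way, Assumption~\ref{assumption moments exist} is clear since $\eta(E)\sim\NegBin(\alpha(E),p)$, and Theorem~\ref{theorem: intertwining projection} then applies.
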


Notice that we have a family of reversible measures, indexed by $p\in (0,1)$, moreover the reversible Pascal distributions do not depend on the function $c(x,y)$ in the dynamics. 

Theorem~\ref{theorem: gSIP}(ii) is complemented by a concrete relation of the abstract generalized orthogonal polynomials $I_n(f_n,\cdot)$ with the univariate Meixner polynomials defined in Section~\ref{section: example IPS on finite set}. Generalized orthogonal polynomials of Meixner's type have been studied intensely in the context of non-Gaussian white noise \cite{berezansky1996infinite-dimensional,berezansky2002pascal}. Connections with quantum probability and representations of $^*$-Lie algebras and current algebras are investigated in \cite{accardi2002renormalized, accardi2009quantum}.

The following proposition is a variant of Lemma 3.1 in \cite{lytvynov2003JFA}. We give a self-contained proof in Section~\ref{sec:meixner-properties} that does not use the machinery of Jacobi fields or distribution theory.

\begin{proposition}
    \label{prop:in-to-meixner}
    The intertwiner $I_n$ is related to the Meixner polynomials via
    \begin{align}
        \label{eq: Meixner polynomials}
        I_n(\one_{B_1}^{\otimes d_1} \otimes \cdots \otimes \one_{B_N}^{\otimes d_N}, \eta) = \prod_{k=1}^N \mathscr M_{d_k}\bra{\eta(B_k); \alpha(B_k); p}.
    \end{align}
    for $\rho$-almost all $\eta \in \mathbf N_{<\infty}$ and all  pairwise disjoint $B_1, \ldots, B_N \in \mathcal{E}$, $n \in \N_0$, $d_1, \ldots, d_N$, $N \in \N$ with $d_1 + \ldots + d_N = n$. 
\end{proposition}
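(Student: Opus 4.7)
My plan is to reduce \eqref{eq: Meixner polynomials} to a single-block orthogonality problem and close it using the univariate orthogonality \eqref{eq:mortho-univariate} of the Meixner polynomials against the negative binomial distribution. The Pascal distribution $\rho$ is completely independent by definition, so Proposition~\ref{proposition: properties ort pol 2} supplies the factorization
\begin{equation*}
	I_n(\one_{B_1}^{\otimes d_1} \otimes \cdots \otimes \one_{B_N}^{\otimes d_N}, \eta) = \prod_{k=1}^N I_{d_k}(\one_{B_k}^{\otimes d_k}, \eta)
\end{equation*}
$\rho$-almost surely, so it suffices to prove $I_d(\one_B^{\otimes d}, \eta) = \mathscr M_d(\eta(B); \alpha(B); p)$ for a single measurable $B\subseteq E$ and $d\in \N_0$.

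By Proposition~\ref{proposition: properties ort pol 1}, the element $I_d(\one_B^{\otimes d},\sd) \in L^2(\rho)$ is the orthogonal projection of $J_d(\one_B^{\otimes d}, \eta) = (\eta(B))_d$ onto $\overline{\mathcal P_{d-1}}^\perp$. Both $(\eta(B))_d$ and $\mathscr M_d(\eta(B); \alpha(B); p)$ are monic polynomials of degree $d$ in the variable $\eta(B)$, so their difference is a polynomial of degree at most $d-1$ in $\eta(B)$ and, via $\eta(B)^j = \int \one_B^{\otimes j} \dd \eta^{\otimes j}$, lies in $\mathcal P_{d-1}$. Uniqueness of the orthogonal projection therefore reduces the claim to
\begin{equation}\label{eq:ortho-proposal}
	\mathscr M_d(\eta(B); \alpha(B); p) \perp \mathcal P_{d-1} \quad \text{in } L^2(\rho).
\end{equation}
Taking $F(\eta) = \int f_k \dd \eta^{\otimes k}$ with $k\le d-1$, splitting $\eta = \eta_B + \eta_{B^c}$ and expanding the $k$-fold tensor, complete independence of $\eta_B$ and $\eta_{B^c}$ together with the fact that $\mathscr M_d(\eta(B); \alpha(B); p)$ depends on $\eta$ only through $\eta_B(B)$ makes all $\eta_{B^c}$-contributions factor out as scalars after applying $\E_\rho$ and conditioning on $\eta_B(B)$. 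Thus \eqref{eq:ortho-proposal} will follow from \eqref{eq:mortho-univariate} once we know that for every $j\le k$ and every bounded measurable $h:B^j \to \R$, the conditional expectation $\E_\rho[\int h \dd \eta_B^{\otimes j}\mid \eta_B(B)]$ is $\rho$-almost surely a polynomial of degree at most $j$ in $\eta_B(B)$.

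The main obstacle is this polynomial-degree claim. The Laplace functional $\E_\rho[e^{-\int g \dd \eta}] = \exp\bigl(\int \log\tfrac{1-p}{1-pe^{-g}}\dd\alpha\bigr)$ realizes $\rho$ as the law of $\sum_i n_i\delta_{Y_i}$, where $\{(Y_i, n_i)\}$ is a Poisson point process on $E\times \N$ with intensity $\alpha\otimes \nu$ and log-series jump distribution $\nu(\{n\}) = p^n/n$. Combining the Stirling-type decomposition $\int h\dd \eta_B^{\otimes j} = \sum_\pi \int h^{(\pi)}\dd \eta_B^{(|\pi|)}$ (summed over set partitions $\pi$ of $\{1,\dots,j\}$) with Campbell's formula for the factorial moments of the cluster Poisson process and the elementary generating-series identity $\sum_{n\ge 1} (n)_m p^n z^n/n = (m-1)!(pz)^m/(1-pz)^m$, the joint generating series $\sum_n z^n \E_\rho[\eta_B^{(m)}(g) \one_{\eta_B(B) = n}]$ can be evaluated explicitly; dividing by $\P_\rho(\eta_B(B) = n)$ yields the closed form
\begin{equation*}
	\E_\rho\Bigl[\int h^{(\pi)} \dd \eta_B^{(|\pi|)} \Bigm| \eta_B(B) = n\Bigr] = \frac{(n)_{|\pi|}}{(\alpha(B))^{(|\pi|)}}\, C_\pi(h)
\end{equation*}
with $C_\pi(h)$ independent of $n$, so that summing over $\pi$ produces a polynomial of degree at most $j$ in $n$, as required. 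This step is the Pascal-space analogue of the computation in \cite[Lemma~3.1]{lytvynov2003JFA}.
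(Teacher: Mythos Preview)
Your argument is correct and takes a genuinely different route from the paper's. Both proofs begin identically: reduce via Proposition~\ref{proposition: properties ort pol 2} to a single set $B$, observe that $\mathscr M_d(\eta(B);\alpha(B);p)$ differs from $\eta(B)^d$ (equivalently, from $(\eta(B))_d$) by an element of $\mathcal P_{d-1}$, and conclude that it suffices to show $\mathscr M_d(\eta(B);\alpha(B);p)\perp \mathcal P_{d-1}$.

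From there the two proofs diverge. The paper never conditions on $\eta(B)$. Instead, for test functions of the form $\one_{C_1}^{\otimes d_1}\otimes\cdots\otimes\one_{C_N}^{\otimes d_N}$ with disjoint $C_i\subset B$, it invokes the \emph{convolution identity}~\eqref{equation: Meixner convolution property},
\[
\mathscr M_d\Bigl(\textstyle\sum_i x_i;\sum_i a_i;p\Bigr)=\sum_{k_1+\cdots+k_N=d}\binom{d}{k_1,\ldots,k_N}\prod_i \mathscr M_{k_i}(x_i;a_i;p),
\]
together with complete independence, so that univariate orthogonality kills every summand directly. A monotone-class argument extends to all of $\mathcal P_{d-1}(B)$, and a Lemma~\ref{lem:localize}-style independence argument then upgrades $\mathcal P_{d-1}(B)$ to $\mathcal P_{d-1}$.

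You instead trade the convolution identity for an explicit moment computation: after factoring out $\eta_{B^c}$ by independence, you condition on the total count and show that $\E_\rho\bigl[\int h\,\dd\eta_B^{\otimes j}\mid \eta_B(B)=n\bigr]$ is a polynomial of degree at most $j$ in $n$. Your compound-Poisson/Campbell/generating-series sketch does go through; carried out in full it yields the closed form $\E_\rho\bigl[\eta_B^{(m)}(g)\mid\eta_B(B)=n\bigr]=(n)_m\,\lambda_m(g)/(\alpha(B))^{(m)}$ with $\lambda_m$ exactly as in~\eqref{eq:lambdandef}, which is the P\'olya-urn/Dirichlet-multinomial structure of the Pascal process. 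This gives a moment identity of independent interest and links naturally to Proposition~\ref{prop:meixner-lambdan}, at the cost of a heavier calculation that you only outline. The paper's route is shorter and stays at the level of special-function identities; yours is more probabilistic and makes the conditional law of the Pascal process do the work.
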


We define a measure $\lambda_n$ on $E^n$ that replaces the product measure $\lambda^{\otimes n}$ in the Poisson-Charlier case. Let $\Sigma_n$ be the collection of set partitions of $\{1,\ldots, n\}$. For $\sigma\in \Sigma_n$ and $g:E^n\to \R$, let $|\sigma|$ be the number of blocks of the partition $\sigma$ and $g_\sigma:E^{|\sigma|}\to \R$ the function obtained by identifying, in order of occurrence, those arguments that belong to the same block of $\sigma$. Define
\begin{equation}
    \label{eq:lambdandef}
	\lambda_n(B) = \sum_{\sigma \in \Sigma_n} \bra{\prod_{A\in \sigma} (\abs{A}-1)!} \int (\one_B)_{\sigma} \, \dd \alpha^{\otimes |\sigma|},\quad B\in \mathcal E^n.
\end{equation}
For example  $\lambda_1 = \alpha$ and 
\[
	\lambda_2(B) = \iint \one_B(x,y) \alpha(\dd x) \alpha (\dd y) + \int \one_B(x,x) \alpha(\dd x) 
\]
for all $B\in \mathcal E^2$.  Further set $\int f_0 g_0 \dx \lambda_0 := f_0 g_0$ for $f_0, g_0 \in \mathcal{F}_0=\R$.

The following proposition generalizes the univariate orthogonality relation~\eqref{eq:mortho-univariate}. It is similar to Corollary~5.2 in \cite{lytvynov2003JFA}, we provide a self-contained proof in Section~\ref{sec:meixner-properties}.

\begin{proposition}
    \label{prop:meixner-lambdan}
	The following orthogonality relations holds
    \begin{align}
        \label{eq: Orthogonality SIP}
    	\int I_n (f_n, \sd) I_m(g_m, \sd) \dx \rho = \one_{\{n = m\}} \frac{p^n n!}{(1-p)^{2n}} \int f_n g_m \dx \lambda_n
    \end{align}
    for $f_n \in \mathcal{F}_n$, $g_m \in \mathcal{F}_m$, $n,m \in \N_0$.
\end{proposition}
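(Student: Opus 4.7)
The strategy is to split by whether $n=m$, then use bilinearity to reduce the core case to disjoint-support product indicators, on which Proposition~\ref{prop:in-to-meixner} turns everything into univariate Meixner polynomials. The case $n\neq m$ is immediate: assuming without loss of generality $m<n$, the definition gives $I_m(g_m,\cdot)\in\overline{\mathcal P_m}\subseteq\overline{\mathcal P_{n-1}}$, whereas $I_n(f_n,\cdot)\in\overline{\mathcal P_{n-1}}^\perp$, so their $L^2(\rho)$ inner product vanishes and matches the right-hand side factor $\one_{\{n=m\}}=0$. For the rest of the proof I focus on $n=m$.

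Both sides define symmetric bilinear forms in $(f_n,g_n)$ on the space of bounded measurable permutation-invariant functions on $E^n$. Since $\rho$ is a probability measure, $\alpha$ is finite (hence $\lambda_n$ is finite), these forms are continuous with respect to the supremum norm. By polarization it is enough to check the identity on the diagonal $f_n=g_n$, and by linearity plus a standard monotone-class approximation it suffices to verify it for $f_n$ equal to the symmetrization of a tensor product
\[
    f = \one_{B_1}^{\otimes d_1}\otimes\cdots\otimes \one_{B_N}^{\otimes d_N}
\]
with pairwise disjoint $B_1,\ldots,B_N\in\mathcal E$ and $d_1+\cdots+d_N=n$.

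For the left-hand side, note that $\eta^{\otimes n}(f)$ is unchanged by symmetrization, so $I_n(f^{\mathrm{sym}},\cdot)=I_n(f,\cdot)$ in $L^2(\rho)$. By Proposition~\ref{prop:in-to-meixner},
\[
    I_n(f,\eta) \;=\; \prod_{k=1}^N \mathscr M_{d_k}\bigl(\eta(B_k);\alpha(B_k);p\bigr).
\]
Complete independence of the Pascal process $\rho$ makes the variables $\eta(B_k)\sim\NegBin(\alpha(B_k),p)$ independent, so the univariate orthogonality \eqref{eq:mortho-univariate} yields
\[
    \int I_n(f^{\mathrm{sym}})^2\,\dd\rho \;=\; \prod_{k=1}^N \frac{p^{d_k}d_k!(\alpha(B_k))^{(d_k)}}{(1-p)^{2d_k}} \;=\; \frac{p^n\,\bigl(\prod_k d_k!\bigr)\prod_k (\alpha(B_k))^{(d_k)}}{(1-p)^{2n}}.
\]

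For the right-hand side, an elementary computation of the symmetrization over the disjoint $B_k$ shows $f^{\mathrm{sym}}=\frac{\prod_k d_k!}{n!}\one_{\mathcal A}$ where $\mathcal A:=\{(x_1,\ldots,x_n):\#\{i:x_i\in B_k\}=d_k\text{ for each }k\}$. Plugging into the definition~\eqref{eq:lambdandef} of $\lambda_n$ and using disjointness, a partition $\sigma\in\Sigma_n$ contributes only when each block is mapped entirely to some $B_k$, so the sum factors into a choice of an ordered partition $(S_1,\ldots,S_N)$ of $\{1,\ldots,n\}$ with $|S_k|=d_k$ (yielding $\frac{n!}{\prod_k d_k!}$ choices) followed by independent partitions $\sigma_k\in\Sigma_{d_k}$ of each $S_k$. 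Combined with the classical identity
\[
    \sum_{\sigma\in\Sigma_d}\prod_{A\in\sigma}(|A|-1)!\,a^{|\sigma|} \;=\; (a)^{(d)},
\]
(obtained by reading $(|A|-1)!$ as the number of cyclic orderings of $A$ and summing permutations of $d$ elements by cycle count), this gives
\[
    \int\one_{\mathcal A}\,\dd\lambda_n \;=\; \frac{n!}{\prod_k d_k!}\prod_k (\alpha(B_k))^{(d_k)},\qquad \int (f^{\mathrm{sym}})^2\,\dd\lambda_n \;=\; \frac{\prod_k d_k!}{n!}\prod_k (\alpha(B_k))^{(d_k)}.
\]
Multiplying by $\frac{p^n n!}{(1-p)^{2n}}$ reproduces exactly the left-hand side. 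The main technical hurdle is the last combinatorial bookkeeping identifying $\int\one_{\mathcal A}\,\dd\lambda_n$ with a product of rising factorials via the cycle/partition identity above; once this is in place, polarization plus the monotone-class extension delivers the general statement.
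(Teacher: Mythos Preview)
Your approach is essentially the paper's, and the core computations---applying Proposition~\ref{prop:in-to-meixner} together with complete independence on the left, and the partition/cycle identity $\sum_{\sigma\in\Sigma_d}\prod_{A\in\sigma}(|A|-1)!\,a^{|\sigma|}=(a)^{(d)}$ to evaluate $\lambda_n$ on the right---are correct. There is, however, a logical slip in the reduction step. After polarization the identity to be checked is $Q_1(f_n)=Q_2(f_n)$ for the \emph{quadratic} forms $Q_i(f)=B_i(f,f)$, and ``linearity plus monotone class'' does not reduce a quadratic identity to a spanning set: the set $\{f:Q_1(f)=Q_2(f)\}$ is not a vector space, so the functional monotone class theorem does not apply to it. The paper avoids this by skipping polarization and using bilinearity directly: it checks the identity on \emph{pairs} $(\tilde f_n,\tilde g_n)$ with $f_n=\one_{B_1}^{\otimes d_1}\otimes\cdots\otimes\one_{B_N}^{\otimes d_N}$ and $g_n=\one_{B_1}^{\otimes d'_1}\otimes\cdots\otimes\one_{B_N}^{\otimes d'_N}$ (same disjoint sets, possibly different multi-degrees), and observes that whenever $d_i\neq d'_i$ for some $i$ both sides vanish---the left by univariate Meixner orthogonality, the right because $\tilde f_n\tilde g_n\equiv 0$. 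Your diagonal computation then covers the remaining case $d_i=d'_i$ for all $i$, so the fix is simply to replace ``polarization then linearity'' by ``bilinearity on pairs, with the off-diagonal case handled separately''.

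A minor stylistic difference worth noting: the paper first evaluates $\int f_n^2\,\dd\lambda_n=\lambda_n(B_1^{d_1}\times\cdots\times B_N^{d_N})=\prod_k(\alpha(B_k))^{(d_k)}$ on the \emph{unsymmetrized} product set (where only partitions refining the colour blocks survive, giving the rising-factorial product directly), and then passes to the symmetrized version via the permutation invariance of $\lambda_n$, obtaining $\int\tilde f_n^2\,\dd\lambda_n=\frac{1}{n!}\bigl(\prod_k d_k!\bigr)\int f_n^2\,\dd\lambda_n$. Your route through $f^{\mathrm{sym}}=\frac{\prod_k d_k!}{n!}\one_{\mathcal A}$ and the direct computation of $\lambda_n(\mathcal A)$ reaches the same endpoint and is equally valid; you are in fact more explicit than the paper about the underlying cycle-counting identity.
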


\begin{remark}[Sequential construction of $\lambda_n$]
    For $n\in \N$, define a kernel $k_{n,n+1}:E^n\times \mathcal E^{n+1}\to \R_+$ by
    \[
    	k_{n,n+1}(x_1,\ldots,x_n;B) = \int \one_B(x_1,\ldots,x_n,y) \alpha(\dd y) + \sum_{i=1}^n \one_B(x_1,\ldots,x_n,x_i). 
    \] 
    Then $\lambda_{n+1}= \lambda_n k_{n,n+1}$ meaning that $\lambda_{n+1}(B) =\int_{E^n} \lambda_n(\dd x) k_{n,n+1}(x,B)  $ for all $B\in \mathcal E^{n+1}$. Thus $\lambda_n$ is formed by adding points one by one; at each step, a new point either joins a pile of existing points or is placed at a new location $y$. This relation on the one hand connects to the very definition of the dynamics of the gSIP and on the other hand is reminiscent of the Chinese restaurant process used in sequential constructions for random partitions \cite[Chapter 3]{pitman2006combinatorial}. Notice that, upon normalization by the total mass of $\lambda_n$, \eqref{eq:lambdandef} gives rise to a probability measure on the set $\Sigma_n$ of partitions, related to the Ewens sampling formula. 
\end{remark}

\subsection{Proof of Theorem \ref{theorem: gSIP}}

Here we prove Theorem \ref{theorem: gSIP}. In addition, we remind the reader of an explicit description of the Pascal process as a compound Poisson process.

\paragraph{Consistency (Proof of Theorem~\ref{theorem: gSIP}(i))}
\label{section: consistency of gSIP}
We start by proving that the gSIP is consistent (see Definition \ref{def: consistent particle system}). Since we consider the finite particle case only, it is enough to check the commutation property in Definition \ref{def: consistent particle system} for the generator instead of the semigroup, i.e., $\mathcal{A} L f(\eta) = L\mathcal{A} f(\eta)$ for all $f \in \mathcal{G}$ and $\eta \in \mathbf{N}_{<\infty}$. Indeed, decompose the generator as $L=L_1+ L_2$ with
$$L_1f(\eta) := \iint \bra{f(\eta - \delta_x + \delta_y) - f(\eta)} c(x,y) \alpha(\dd y) \eta(\dd x)$$ and $$L_2f(\eta) := \iint \bra{f(\eta - \delta_x + \delta_y) - f(\eta)} c(x,y) \eta(\dd y) \eta(\dd x).$$ Notice that $L_1$ is the generator of a system of independent Markov processes, namely, independent random walkers with transition kernel given by $c(x,y) \alpha(\dd y)$. Thus, it is straightforward to check that $\mathcal{A} L_1 f(\eta) = L_1 \mathcal{A} f(\eta)$. It remains to show that 
\begin{equation}
    \label{eq: commutation L2 gsip}
    \mathcal{A} L_2 f(\eta) = L_2 \mathcal{A} f(\eta).
\end{equation}
First, we compute
\begin{align*}
    &L_2 \mathcal{A} f(\eta) \\&= \iiint f(\eta - \delta_x + \delta_y - \delta_z) \eta(\dd z) c(x,y) \eta(\dd y) \eta(\dd x) 
       - \iint f(\eta - 2\delta_x + \delta_y) c(x,y) \eta(\dd y) \eta(\dd x) \\
        &+\iint f(\eta - \delta_x) c(x,y) \eta(\dd y) \eta(\dd x) 
    -  \iiint f(\eta - \delta_z) \eta(\dd z) c(x,y) \eta(\dd y) \eta(\dd x) 
\end{align*}
second, 
\begin{align*}
    \mathcal{A} L_2 f(\eta)& = \iiint \bra{f(\eta - \delta_z - \delta_x + \delta_y) - f(\eta - \delta_z)} c(x,y) (\eta - \delta_z)(\dd y) (\eta - \delta_z)(\dd x) \eta(\dd z) \\
    &=L_2 \mathcal{A} f(\eta)\\&- \iint \bra{f(\eta - \delta_x ) - f(\eta - \delta_z)} c(x,z) \eta(\dd x) \eta(\dd z) + \int \bra{f(\eta - \delta_z) - f(\eta - \delta_z)} c(z,z) \eta(\dd z).
\end{align*}
Because the last two integrals above are both $0$, we obtain \eqref{eq: commutation L2 gsip} and the proof is concluded.
\qed

\paragraph{Explicit Representation of the Pascal Process.}

The Pascal process, also called negative binomial process, is a well-known point process (cf. \cite{serfozo1990point}, \cite{kozubowski2008distributional}). For the reader's convenience we recall the construction of that process. 

Fix $p \in (0,1)$ and a finite measure $\alpha$. Note that the Pascal point process has the structure of a measure-valued Lévy process, since $\zeta(A_1), \ldots, \zeta(A_n)$ are independent for pairwise disjoint $A_1, \ldots, A_n \in \mathcal{E}$ and the distribution of $\zeta(A)$ only depends on $\alpha(A)$, $A \in \mathcal{E}$. For more details, see \cite{CompletelyRandomMeasures,kingman1992poisson,Kallenberg2017}, 

More precisely, the Pascal process can be constructed in the following way, compound Poisson process (see \cite[Chapter 15]{LastPenroseLecturesOnThePoissonProcess}), i.e., 
\begin{align*}
    \zeta(A) := \int_{A \times \N} y\, \xi(\dd(x,y)), A \in \mathcal{E}
\end{align*}
where $\xi$ is a Poisson point process on $E \times \N$ with intensity measure $\lambda := \alpha \otimes \nu$ where the Lévy measure is given by
$
    \nu := \sum_{n=1}^\infty \frac{p^n}{n} \delta_n
$. It can readily checked that the Laplace functional is given by 
\begin{align}
    \label{Eq: Laplace functional Pascal process}
    L_\zeta(f) := \E \sbra{e^{-\int f \dx \zeta}} = \exp\bra{ \int \bra{e^{-yf(x)} - 1} \lambda(\dd(x,y)) } = \exp\bra{- \int \Phi(f(x)) \alpha (\dd x) }
\end{align}
with $\Phi(y) := \log \bra{ \frac{1-pe^{-y}}{1-p}}$, $y \geq 0$. Equation~\eqref{Eq: Laplace functional Pascal process} implies for $A \in \mathcal{E}$
\begin{align*}
    \E\bra{e^{-\zeta(A) s}} = \exp\bra{- \int \Phi(s\one_A(x)) \alpha (\dd x) } = \exp\bra{- \alpha(A) \Phi(s) } 
    = \bra{\frac{1 - p }{1-pe^{-s}}}^{\alpha(A)}, \quad s > 0
\end{align*}
which is the Laplace transform of a negative binomial distributed random variable with parameters $\alpha(A)$ and $p$. Moreover, \eqref{Eq: Laplace functional Pascal process} implies the independence of $\zeta(A_1), \ldots, \zeta(A_n)$ immediately.

\paragraph{Reversible Measure (Proof of Theorem~\ref{theorem: gSIP}(ii)).}
Let $Q_c = Q$ be the jump kernel from~\eqref{eq:sip-jumpkernel}. It is enough to check the detailed balance relation 
\begin{equation} \label{eq:sip-balance}
	\rho \otimes Q_c(\mathscr A \times \mathscr B) = \rho \otimes Q_c(\mathscr B\times \mathscr A)\quad \mathscr A, \mathscr B\in \mathcal N_{<\infty}, 
\end{equation}
where 
\[
	\rho \otimes Q_c(\mathscr A \times \mathscr B) = \int_\mathscr A \rho(\dd \eta) \iint \one_\mathscr B(\eta- \delta_x+ \delta_y) c(x,y) (\alpha + \eta)(\dd y) \eta(\dd x).
\] 
The proof idea is that for particularly simple choices of $c(x,y)$ and $\mathscr A,\mathscr B$, the relation~\eqref{eq:sip-balance} boils down to a detailed balance relation for a discrete inclusion process. 

We start with some preliminary observations. First, it is enough to prove \eqref{eq:sip-balance} for functions $c$ of the form 
\begin{equation} \label{eq:simple-c}
	c(x,y) = \sum_{i,j=1}^r d_{ij} \one_{C_i}(x) \one_{C_j}(y)
\end{equation}
with $r\in \N$, symmetric non-negative weights $d_{ij} = d_{ji} \geq 0$, and sets $A_1,\ldots, A_r\in \mathcal E$. Indeed, the set $\mathcal M$ of non-negative measurable functions $f:E\times E\to \R_+$ for which the symmetrized function $c(x,y):= \frac12 (f(x,y)+ f(y,x))$ satisfies~\eqref{eq:sip-balance} is closed under pointwise monotone limits. If~\eqref{eq:sip-balance} holds true for all functions $c$ of the form~\eqref{eq:simple-c}, then $\mathcal M$ contains all indicators $\one_{A\times B}$, $A,B\in \mathcal E$. The monotone class theorem then implies that $\mathcal M$ contains all bounded non-negative measurable functions. 

Second, by the $\pi$-$\lambda$ theorem, it is enough to check~\eqref{eq:sip-balance} for sets of the form
\begin{equation}
    \label{eq:simple-ab}
	\mathscr A = \bigcap_{j=1}^k \bigl\{\eta \in \mathbf N_{<\infty}:\, \eta(A_j) = m_j\bigr\}, \quad 
			\mathscr B = \bigcap_{j=1}^\ell \bigl\{\eta \in \mathbf N_{<\infty}:\, \eta(B_j) = n_j\bigr\}
\end{equation} 
with $k,\ell \in \N$, $A_i, B_j\in \mathcal E$, and $m_i,n_j\in \N_0$. 

Third, for the relation~\eqref{eq:sip-balance} to hold true for all $c(x,y)$ of the form~\eqref{eq:simple-c} and all sets $\mathscr A$, $\mathscr B$ of the form~\eqref{eq:simple-ab}, it is enough to consider the situation where $r = k = \ell$, $A_i = B_i = C_i$, and the sets $A_1,\ldots, A_r$ are pairwise disjoint, as the general case follows by taking linear combinations. 

In the situation of the last paragraph, we compute, for $\eta \in \mathscr A$, and assuming all diagonal elements $d_{ii}$ vanish,
\begin{align}
    \label{eq:sipq1}
	Q_c(\eta,\mathscr B) & = \sum_{i,j=1}^r d_{ij} \int_{A_i} \bra{ \int_{A_j} \one_\mathscr B(\eta - \delta_x+ \delta_y) (\alpha + \eta)(\dd y) } \eta(\dd x) \notag \\
	& = \sum_{i,j=1}^r d_{ij} \eta(A_i) \bigl(\alpha(A_j) + \eta(A_j) \bigr)
	\one_{\{\eta(A_i) -1 = n_i\}} \one_{\{\eta(A_j)+1 = n_j\}} \prod_{s\notin\{i,j\}} \one_{\{\eta(A_s) = n_s\}} \notag \\
	& = \sum_{\substack{1\leq i,j\leq r:\\ i \neq j}} d_{ij} m_i (\alpha(A_j) + m_j) \delta_{m_i-1,n_i} \delta_{m_j+1,n_j} \prod_{s\notin \{i,j\}} \delta_{m_s,n_s},
\end{align} 	
and we recognize the transition rates of the SIP  with state space $\N_0^r$. For non-zero diagonal elements, we need to add
\begin{equation} \label{eq:sipq2}
	\sum_{i=1}^r d_{ii} m_i (\alpha(A_i) + m_i) \prod_{s=1}^r \delta_{m_i,n_i}.
\end{equation} 
We denote the sum of~\eqref{eq:sipq1} and~\eqref{eq:sipq2} $q(m,n)$. Notice that for non-zero $d_{ii}$ we may have $q(m,m)>0$. 

Abbreviate $\alpha(A_j)=:\alpha_j$. For $j\in \{1,\ldots,r\}$ and $m_j\in \N_0$, set 
\[
	\pi_j(m_j):= \rho\bigl( \{\eta:\, \eta(A_j) = m_j\}\bigr) = (1-p)^{\alpha_j} \alpha_j(\alpha_j+1)\cdots (\alpha_j + m_j - 1) \frac{p^{m_j}}{m_j!}. 
\] 	
Further set $\pi(m) = \pi_1(m_1)\cdots \pi_r(m_r)$. Then 
\[ 
	\rho \otimes Q_c(\mathscr A,\mathscr B) = \pi(m) q(m,n). 
\]
A similar computation shows $\rho \otimes Q_c(\mathscr B,\mathscr A) = \pi(n) q(n,m)$. The symmetry relation~\eqref{eq:sip-balance} now reads $\pi(m) q(m,n)= \pi(n) q(n,m)$ which is the detailed balance relation
for the SIP. \qed

\subsection{Properties of Generalized Meixner Polynomials. Proof of Propositions~\ref{prop:in-to-meixner} and~\ref{prop:meixner-lambdan}} 

\label{sec:meixner-properties}

Let $p \in (0,1)$. Note that the generating function of monic Meixner polynomials, given by (see, e.g., \cite{HypergeometricOrthogonalPolynomials})
\begin{align*}
    e_t(x,a) := \sum_{n=0}^\infty \frac{t^n}{n!} \mathscr M_n(x;a; p) = \bra{\frac{1-p+t}{1-p+tp}}^x \bra{\frac{1-p}{1-p+tp}}^a, \quad t,a > 0, x \in \N_0,
\end{align*}
satisfies $e_t(x+y, a+b) = e_t(x, a) e_t(y,b)$ for each $t > 0$, $x,y \in \N_0$, $a,b > 0$. As a consequence, we get the convolution property (see, e.g., \cite{al1976convolutions})
\begin{align}
    \label{equation: Meixner convolution property}
    \mathscr M_n(x+y;a+b; p) = \sum_{k=0}^n \binom{n}{k} \mathscr M_k(x;a; p) \mathscr M_{n-k}(y;b; p).
\end{align}

\begin{proof}[Proof of Proposition~\ref{prop:in-to-meixner}]
	By the factorization property from Proposition~\ref{proposition: properties ort pol 2} it is enough to show 
	\begin{equation}
        \label{eq:ione-meixner}
		I_1(\one_{A^d},\eta) =\mathscr M_d(\eta(A);\alpha(A);p)
	\end{equation}
	for all $d\in \N$ and $A\in \mathcal E$. As we have chosen our univariate Meixner polynomials $\mathscr M_d$ to have leading coefficient one, we know that $\mathscr M_d(\eta(A);\alpha(A);p)$ is equal to $\eta(A)^d$ plus some polynomial in $\eta(A)$ of degree $\leq d-1$. Therefore \eqref{eq:ione-meixner} follows once we know that the map $\eta \mapsto \mathscr M_d(\eta(A);\alpha(A);p)$ is orthogonal to the space $\mathcal P_{d-1}$. We shall see that this identity follows from the convolution property~\eqref{equation: Meixner convolution property} and the complete independence.

	We check first that $\eta \mapsto \mathscr M_d(\eta(A);\alpha(A);p)$  is orthogonal in $L^2(\rho)$ to all maps $\eta\mapsto \eta^{\otimes m}(C)$, for every $m\leq d-1$ and $C\in \mathcal E^{m}$ with $C\subset A^m$. When $C=A^m$, we are looking at two univariate polynomials in the variable $x=\eta(A)$ and the orthogonality relation follows from the orthogonality of the univariate Meixner polynomials $x\mapsto \mathscr M_d(\eta(A);\alpha(A);p)$ to the monomial $x\mapsto x^m$. The orthogonality to constant functions ($m=0$) follows from univariate orthogonality as well. 
	
	Next consider the case $C= C_1^{d_1}\times\cdots \times C_N^{d_N}$ with $N\in \N$, $d_1,\ldots,d_N\in\N$ with $d_1+\cdots + d_N \leq d-1$ and  pairwise disjoint measurable sets $C_i\subset A$. Suppose first that $C_1\cup \cdots \cup C_N=A$. We use the convolution property~\eqref{equation: Meixner convolution property} and the complete independence of the Pascal point process to find
	\begin{multline} \label{eq:mortho2} 
		\int \mathscr M_d(\eta(A);\alpha(A);p) \eta^{\otimes m}(C) \rho(\dd \eta) \\
			= \sum_{k_1+\cdots + k_N=m} \binom{m}{k_1,\ldots, k_{N}} \prod_{i=1}^N \int\mathscr M_{k_i}\bigl( \eta(C_i);\alpha(C_i);p\bigr) \eta^{\otimes d_i}(C_i) \rho(\dd \eta). 
	\end{multline} 
	In each summand, we must have $d_i< k_i$ for at least one $i\in \{1,\ldots,N\}$ and therefore by the orthogonality of univariate Meixner polynomials, at least one of the integrals on the right side above vanish. As a consequence, 
	\begin{equation}\label{eq:mortho}
			\int \mathscr M_d(\eta(A);\alpha(A);p) \eta^{\otimes m}(C) \rho(\dd \eta) =0 
	\end{equation} 	
	This holds true as well when each $C_i$ is contained in $A$ and $C_{N+1}:= A\setminus (C_1\cup\cdots \cup C_N)$ is non-empty. In that case we use a similar decomposition but now the sum on the right side of~\eqref{eq:mortho2} is over $(k_1,\ldots, k_{N+1})$ and the product has an additional factor $\int \mathscr M_{k_{N+1}}(\eta(C_{N+1});\alpha(C_{N+1});p)\rho(\dd \eta)$. 
	
	Every Cartesian product $C= D_1\times \cdots \times D_m$ contained in $A^m$ is a disjoint union of finitely many Cartesian products in which any two distinct factors are either distinct or equal. Therefore, by linearity, the orthogonality relation~\eqref{eq:mortho} extends to all such sets. 
	The functional monotone class theorem yields the orthogonality of the generalized Meixner polynomial to all maps of the form $\eta\mapsto \eta^{\otimes m}(f_m)$ with bounded measurable $f_m:E^m\to \R$ supported in $A^m$ and then, by linearity, the orthogonality to all linear combinations of such maps.
	
	In the notation of Lemma~\ref{lem:localize} below, we have checked the orthogonality of $\mathscr M_{d}(\eta(A);\alpha(A);p)$ to $\mathcal P_{d-1}(A)$. Using complete independence and arguments similar to the proof of Lemma~\ref{lem:localize}, we conclude that the Meixner polynomial is in fact orthogonal to $\mathcal P_{d-1}$. This completes the proof of the proposition. 
\end{proof}

\begin{proof}[Proof of Proposition~\ref{prop:meixner-lambdan}]
	The orthogonality of $I_n(f_n,\sd)$ and $I_m(g_m,\sd)$ for $m\neq n$ is an immediate consequence of  the definition of generalized orthogonal polynomials, it does not use any properties of the Pascal distribution $\rho$. Thus we need only treat the case $m=n$.
	
	Using linearity and the monotone class theorem as in the proof of Proposition~\ref{prop:in-to-meixner}, one finds that it suffices to show  the orthogonality relation for functions $\tilde f_n$, $\tilde g_n$ that are symmetrized versions of indicator functions $f_n,g_n:E^n\to \R$ of the form
	\[
		f_n = \one_{B_1}^{\otimes d_1} \otimes \cdots \otimes \one_{B_N}^{\otimes d_N},\quad g_n = \one_{B_1}^{\otimes d'_1} \otimes \cdots \otimes \one_{B_N}^{\otimes d'_N}
	\]
	with $B_1, \ldots, B_N \in \mathcal{E}$ disjoint, and $\sum_{i=1}^N d_i = \sum_{i=1}^N d'_i = n$. 
	The symmetrization of $f_n$ is given by 
	\[
		\tilde f_n(x_1,\ldots, x_n) = \frac{1}{n!} \sum_{\sigma\in \mathfrak S_n} f_n(x_{\sigma(1)},\ldots, x_{\sigma(n)}),
	\] 
	the symmetrization $\tilde g_n$ of $g_n$ is defined in a similar way. Notice that $I_n(\tilde f_n,\eta) = I_n(f_n,\eta)$ and $I_n(\tilde g_n,\eta) = I_n(g_n,\eta)$ but in general $\int \tilde f_n \tilde g_n\dd \lambda_n \neq\int f_n g_n\dd\lambda_n$.

	Proposition~\ref{prop:in-to-meixner}, the complete independence, and the orthogonality relation~\eqref{eq:mortho-univariate}  for univariate Meixner polynomials yield
	\begin{equation} \label{eq:inhnkn}
		\int I_n(\tilde f_n,\eta)\, I_n(\tilde g_n,\eta)\, \rho(\dd \eta) 
		= \prod_{i=1}^N \one_{\{d_i = d'_i\}} \frac{d_i!p^{d_i}}{(1-p)^{2d_i}}\, (\alpha(B_i))^{(d_i)}.
	\end{equation}
	If $d_i \neq d'_i$ for at least one $i$, then the right side is zero, moreover $\tilde f_n\, \tilde g_n$ vanishes identically. Hence in that case 
	\[
			\int I_n(\tilde f_n,\eta)\, I_n(\tilde g_n,\eta)\, \rho(\dd \eta) 
 			 = 0 = \int \tilde f_n \tilde g_n \dd \lambda_n.
	\]	
	and the required equality holds true. 
	
	If $d_i = d'_i$ for all $i$, then $f_n= g_n$ on $E^n$. By the definition of $\lambda_n$, we have 
	\[
		\int f_n^2\, \dd \lambda_n = \lambda_n( B_1^{d_1}\times \cdots \times B_n^{d_n}) = \prod_{i=1}^N (\alpha(B_i))^{(d_i)}
	\] 
	hence \eqref{eq:inhnkn} gives 
	\begin{equation} \label{eq:hniso}
		\int \bigl(I_n(\tilde f_n,\eta)\bigr)^2\, \rho(\dd \eta) = \Bigl( \prod_{i=1}^N d_i!\Bigr) \int f_n^2 \dd \lambda_n. 	
	\end{equation}
	Next we check that the product of factorials on the right side disappears when $f_n$ is replaced by the  symmetrized function $\tilde f_n$. For $\sigma \in \mathfrak S_n$ and $x=(x_1,\ldots,x_n) \in E^n$, let $x_\sigma := (x_{\sigma(1)},\ldots, x_{\sigma(n)})$. Then, by the permutation invariance of the measure $\lambda_n$, we have
	\[
		\int \tilde f_n^2\, \dd \lambda_n 		= \frac{1}{n!^2}\sum_{\sigma,\tau\in \mathfrak S_n} \int f_n(x_\sigma) f_n(x_\tau) \lambda_n(\dd x)  = \frac{1}{n!} \sum_{\pi \in \mathfrak S_n} \int f_n(x_\pi) f_n(x) \lambda_n(\dd x). 
	\]
	Because of the disjointness of the sets $B_i$, the product $f_n(x_\pi)f_n(x)$ vanishes unless $\pi$ leaves the sets $\{1,\ldots, d_1\}$, $\{d_1+1,\ldots, d_1+d_2-1\}$ etc.\ invariant, and in the latter case $f_n(x_\pi)f_n(x) = f_n(x)^2$. The number of relevant permutations is equal to $d_1!\cdots d_N!$. As a consequence,
	\[
		\int{\tilde f_n}^2\, \dd \lambda_n = \frac{1}{n!}\Bigl(\prod_{i=1}^N d_i!\Bigr) \int f_n^2 \dd \lambda_n. 
	\]	
	By~\eqref{eq:hniso}, we get
	\[
		\int \bigl( I_n(\tilde f_n,\eta)\bigr)^2 \rho(\dd \eta) = \frac{n! p^n}{(1-p)^{2n}} \int \tilde f_n^2\, \dd \lambda_n
	\] 
    which is the required equality (remember $\tilde f_n= \tilde g_n$). 
\end{proof}

\begin{appendices}

\section{Properties of Generalized Orthogonal Polynomials}
\label{appendix: properties ort pol}

This section is devoted to the proof of Propositions~\ref{proposition: properties ort pol 1} and~\ref{proposition: properties ort pol 2}.  Section \ref{section: ortog falling factorial pol} deals with the proofs of \eqref{equation: spanned by Jk} and \eqref{equation: projection Jk}, while Section \ref{section: appendix proof of factorization} deals with the proof of the factorization property of the generalized orthogonal polynomials given in \eqref{equation: factorization of orthogonal polynomials}.

\subsection{Orthogonalization of Generalized Falling Factorial Polynomials}
\label{section: ortog falling factorial pol}

Proposition~\ref{proposition: properties ort pol 1} follows from explicit formulas that link factorial measures $\eta^{(n)}$ and product measure $\eta^{\otimes n}$. These relations are similar to  relations between moments and factorial moments of integer-valued random variables with Stirling numbers, see \cite[Chapter 5]{daley-vere-jones-vol1}. A systematic treatment in terms of Stirling operators is found in \cite{finkelshtein2021stirling}.

\begin{proof}[Proof of \eqref{equation: spanned by Jk}]
In order to show that $\mathcal P_n$ is the linear hull of generalized falling factorials $J_k(f_k,\eta)$, $k\leq n$, it is enough to check that every monomial $\eta\mapsto \eta^{\otimes n}(f)$ is a linear combination of falling factorials of degree $k\leq n$ and vice-versa. 

Let $\eta = \delta_{x_1}+\cdots + \delta_{x_\kappa} \in \mathbf N_{<\infty}$ and $f:E^n\to \R$ a bounded measurable function. Then 
\[
	\eta^{\otimes n} (f_n) = \sum_{1\leq i_1,\ldots, i_n\leq \kappa} f_n(x_{i_1},\ldots, x_{i_n}). 
\] 
Every multi-index $(i_1,\ldots, i_n)$ on the right side gives rise to a set partition $\sigma$ of $\{1,\ldots, n\}$ in which $k$ and $\ell$ belong to the same block if and only if $i_k=i_\ell$. 
Denote by $\Sigma_n$ the set of partitions of $\set{1, \ldots, n}$. For $\sigma\in \Sigma_n$, let $|\sigma|$ be the number of blocks  of the set partition. Further let $(f_n)_\sigma : E^{\abs{\sigma}} \to \R$ be the function obtained from $f_n$ by identifying, in order of occurrence, those arguments which belong to the same block of $\sigma$. 
Grouping multi-indices $(i_1,\ldots,i_n)$ that give rise to the same partition $\sigma$, we find
\[
	\int f_n \dd \eta^{\otimes n} = \sum_{\sigma\in \Sigma_n} \int (f_n)_{\sigma}\, \dd \eta^{(|\sigma|)}
\] 
(compare \cite[Exercise 5.4.5]{daley-vere-jones-vol1}) and conclude that $\eta^{\otimes n}(f_n)$ is a linear combination of generalized falling factorials of degrees $|\sigma|\leq n$. 

Conversely, 
\begin{equation} \label{equation: factorial measure as sum of product measures}
	\int f_n \dd \eta^{(n)} = \sum_{\sigma\in \Sigma_n} (-1)^{n-|\sigma|} \int (f_n)_{\sigma}\, \dd \eta^{\otimes |\sigma|}
\end{equation} 
hence the falling factorial of degree $n$ on the left side is a linear combination of monomials $\eta^{\otimes k}(g_k)$ of degree $k\leq n$. 
\end{proof}

\begin{proof}[Proof of \eqref{equation: projection Jk}]
For $n = 0$ the identities $J_0(f_0, \eta) = f_0 = \eta^{\otimes 0}(f_0)$ yield \eqref{equation: projection Jk}. For $n \in \N$,  we notice that \eqref{equation: factorial measure as sum of product measures} implies 
\[
	\int f_n\dd \eta^{(n)} = \int f_n\dd \eta^{\otimes n} + Q(\eta)
\] 
for some $Q\in \mathcal P_{n-1}$, given by a sum over set partitions with a number of blocks $|\sigma|\leq n-1$. It follows that $\eta\mapsto J_n(f_n,\eta)$ and $\eta\mapsto \eta^{\otimes n}(f_n)$ have the same orthogonal projections onto $(\mathcal P_{n-1})^{\perp}$. 
\end{proof}

\subsection{Factorization Property of Generalized Orthogonal Polynomials}

\label{section: appendix proof of factorization}

In order to exploit the complete independence, it is helpful to check that  if $f:E^n\to \R$ is supported in $A^n$, then $I_n(f,\eta)$ depends only on what happens inside $A$. We show a bit more. Let $\mathcal P_n(A)\subset \mathcal P_n$ be the space of linear combinations of maps $\eta\mapsto \eta^{\otimes k}(f_k)$, $k\leq n$, with bounded measurable $f_k:E\to \R$ vanishing on $E^k\setminus A^k$. Notice that every function $F\in \mathcal P_n(A)$ depends only on the restriction $\eta_A$, defined by $\eta_A(B):= \eta(A\cap B)$. 

\begin{lemma}\label{lem:localize}
	Let $d\in \N$, $A\in \mathcal E$, and $f:E^d\to \R$ a bounded measurable function that vanishes outside $E^d\setminus A^d$. Then there exists a map $Q\in \overline{\mathcal P_{d-1}(A)}$ such that $I_d(f,\eta)= \eta^{\otimes d}(f) - Q(\eta)$ for $\rho$-almost all $\eta\in \mathbf N_{<\infty}$. 
\end{lemma}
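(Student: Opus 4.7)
The plan is to exploit the complete independence of $\rho$ to transform the claim into a measurability statement and then use the $L^2$-projection characterization of $I_d$. By definition of $I_d$, the residual $Q(\eta) := \eta^{\otimes d}(f) - I_d(f,\eta)$ is the orthogonal projection in $L^2(\rho)$ of $F(\eta):=\eta^{\otimes d}(f)$ onto $\overline{\mathcal P_{d-1}}$. Since $f$ is supported in $A^d$, the variable $F(\eta) = \eta_A^{\otimes d}(f)$ is $\mathcal H_A$-measurable, where $\mathcal H_A := \sigma(\eta_A)$ and $\eta_A(\sd) := \eta(A\cap \sd)$. It therefore suffices to prove $Q \in \overline{\mathcal P_{d-1}(A)}$.

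\medskip

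The first main step is an auxiliary lemma: for every $G \in \overline{\mathcal P_{d-1}}$ one has $\E_\rho[G\mid \mathcal H_A] \in \overline{\mathcal P_{d-1}(A)}$. By linearity and $L^2$-continuity of conditional expectation, it suffices to prove this for $G(\eta) = \eta^{\otimes k}(g_k)$ with $k \leq d-1$ and $g_k$ bounded measurable. I decompose the coefficient according to the location pattern of its arguments,
\begin{equation*}
    g_k(x_1,\ldots,x_k) = \sum_{S\subseteq [k]} g_k^S(x_1,\ldots,x_k), \qquad g_k^S := g_k \prod_{i\in S}\one_A(x_i)\prod_{i\notin S}\one_{A^c}(x_i).
\end{equation*}
Splitting $\eta = \eta_A + \eta_{A^c}$ and using the support of $g_k^S$, Fubini gives
\begin{equation*}
    \eta^{\otimes k}(g_k^S) = \int g_k^S(x)\, \eta_A^{\otimes |S|}(\dd x_S)\otimes \eta_{A^c}^{\otimes (k-|S|)}(\dd x_{[k]\setminus S}).
\end{equation*}
Complete independence of $\rho$ (the running hypothesis of Proposition~\ref{proposition: properties ort pol 2}) makes $\eta_A$ and $\eta_{A^c}$ independent point processes, hence
\begin{equation*}
    \E_\rho\bigl[\eta^{\otimes k}(g_k^S)\bigm|\mathcal H_A\bigr] = \int h_S(y)\,\eta_A^{\otimes |S|}(\dd y), \qquad h_S(y) := \int g_k^S(y,z)\,\nu_{k-|S|}(\dd z),
\end{equation*}
where $\nu_{k-|S|}$ denotes the $(k-|S|)$-th moment measure of $\eta_{A^c}$, which is finite by Assumption~\ref{assumption moments exist}. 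Each summand lies in $\mathcal P_{|S|}(A) \subseteq \mathcal P_{d-1}(A)$; summing over $S$ proves the lemma.

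\medskip

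The second main step pins down $Q$ via a Pythagorean argument. Set $Q' := \E_\rho[Q\mid \mathcal H_A]$. The auxiliary lemma applied to $Q \in \overline{\mathcal P_{d-1}}$ gives $Q' \in \overline{\mathcal P_{d-1}(A)} \subseteq \overline{\mathcal P_{d-1}}$. Since $F$ is $\mathcal H_A$-measurable, $F - Q' = \E_\rho[F-Q\mid \mathcal H_A]$ is $\mathcal H_A$-measurable, while $Q - Q'$ has vanishing $\mathcal H_A$-conditional expectation. By the tower property these are orthogonal in $L^2(\rho)$, so
\begin{equation*}
    \|F - Q\|_\rho^2 = \|F - Q'\|_\rho^2 + \|Q-Q'\|_\rho^2.
\end{equation*}
Since $Q$ minimizes $\|F-\sd\|_\rho$ over $\overline{\mathcal P_{d-1}}$ and $Q' \in \overline{\mathcal P_{d-1}}$, equality $Q = Q' \in \overline{\mathcal P_{d-1}(A)}$ follows.

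\medskip

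The main obstacle is the auxiliary lemma. One must carry out the decomposition by location patterns $S\subseteq [k]$, push the conditional expectation through the iterated integral using complete independence, and verify that $h_S$ remains bounded and measurable and that $\nu_{k-|S|}$ is well-defined and finite (which uses Assumption~\ref{assumption moments exist}); the coordinate-ordering bookkeeping when splitting $\eta^{\otimes k}(g_k^S)$ into parts over $\eta_A$ and $\eta_{A^c}$ requires some care. Once the lemma is in hand the Pythagorean conclusion is a routine $L^2$-projection argument.
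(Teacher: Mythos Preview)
Your argument is correct, but it takes a different route from the paper's. The paper defines $Q$ as the orthogonal projection of $\eta\mapsto\eta^{\otimes d}(f)$ onto the \emph{smaller} space $\overline{\mathcal P_{d-1}(A)}$, sets $F(\eta):=\eta^{\otimes d}(f)-Q(\eta)$, and then shows directly that $F\perp\overline{\mathcal P_{d-1}}$: for $C=C_1\times C_2$ with $C_1\subset A^{s_1}$, $C_2\subset (A^c)^{s_2}$, complete independence factorizes $\int F(\eta)\,\eta^{\otimes n}(C)\,\rho(\dd\eta)$ as a product, one factor of which vanishes because $F\perp\mathcal P_{d-1}(A)$; a monotone class argument then handles general $C$. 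You instead start from the projection onto the \emph{larger} space $\overline{\mathcal P_{d-1}}$ and show it actually lands in $\overline{\mathcal P_{d-1}(A)}$, via the structural lemma that $\E_\rho[\,\cdot\mid\mathcal H_A]$ maps $\overline{\mathcal P_{d-1}}$ into $\overline{\mathcal P_{d-1}(A)}$ together with a Pythagorean/minimization argument. Both proofs ultimately rest on the same ingredient---splitting coordinates according to $A$ versus $A^c$ and using complete independence to factor the resulting integrals---but the paper's version is slightly more bare-handed (no conditional expectations, just inner products), while yours isolates a reusable statement about how conditioning on $\eta_A$ interacts with the polynomial filtration.
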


\begin{proof} 
	Let $Q$ be the orthogonal projection of $\eta \mapsto \eta^{\otimes d}(f)$ onto $\overline{\mathcal P_{d-1}(A)}$. Then $Q\in \overline{\mathcal P_{d-1}(A)}$ and the difference $F(\eta):= \eta^{\otimes d}(f)- Q(\eta)$
	is orthogonal to $\overline{\mathcal P_{d-1}(A)}$. We exploit the complete independence to show that  $F$ is actually orthogonal to the bigger space $\overline{\mathcal P_{d-1}}$. 
	
	Let $n\in \{1,\ldots, d-1\}$. If $C\in \mathcal E^n$ is of the form $C_1\times C_2$ with $C_i\in \mathcal E^{s_i}$ where $s_1,s_2\in \N_0$ and  $C_1\subset A$, $C_2\subset A^c$, then $\eta^{\otimes n}(C) = \eta^{\otimes s_1}(C_1)\eta^{\otimes s_2}(C_2)$ and by the complete independence (notice $F(\eta) = F(\eta_A)$) 
	\[
		\int F(\eta) \eta^{\otimes n}(C) \rho(\dd \eta) = \Bigl(\int F(\eta) \eta^{\otimes s_1}(C_1) \rho(\dd \eta)\Bigr) \Bigl(\int \eta^{\otimes s_2}(C_2)\rho(\dd \eta)\Bigr). 
	\] 
	The first integral on the right side vanishes because of $C_1\subset A^{s_1}$, $s_1\leq d-1$, and $F\perp \overline{\mathcal P_{d-1}(A)}$. Therefore $F$ is orthogonal to $\eta\mapsto \eta^{\otimes n}(C)$. 
	
	More generally, every set $C\in \mathcal E^n$ is the disjoint union of Cartesian products $C_1\times \cdots \times C_n$ in which every $C_i$ is either contained in $A$ or in $A^c$. Taking linear combinations and exploiting that $\eta^{\otimes n}(g)$ does not change if we permute variables in $g$, we find that $F$ is orthogonal to $\eta^{\otimes n}(C)$ for all $C\in \mathcal E^n$ and then, by the usual measure-theoretic arguments, to all maps $\eta\mapsto \eta^{\otimes n}(g)$, $g:E^n\to \R$ bounded and measurable. 
	The map $F$ is also orthogonal to all constant functions because every constant function is in $\overline{\mathcal P_{d-1}(A)}$. 
	
	Hence, taking linear combinations of maps $\eta^{\otimes n}(f_n)$, $n\in \{0,\ldots, d-1\}$, we see that $F$ is orthogonal to the space $\overline{\mathcal P_{d-1}}$. As $F(\eta) = \eta^{\otimes n}(f) - Q(\eta)$ with $Q\in \overline{\mathcal P_{d-1}}$, it follows that $I_n(f,\eta)= F(\eta)$ for $\rho$-almost all $\eta$.  
\end{proof} 

When evaluating the product of two generalized orthgonal polynomials $I_n(f,\eta)$ using Lemma~\ref{lem:localize}, it is important to know that the product of two polynomials is again a polynomial. 

\begin{lemma}\label{lem:product-polynomials}
	Let $A$ and $B$ be two disjoint measurable subsets of $E$ and $m,n\in \N_0$. Pick $F\in \overline{\mathcal P_m(A)}$ and $G\in \overline{\mathcal P_n(B)}$. Then $FG$ is in $\overline{\mathcal P_{m+n}(A\cup B)}$. 
\end{lemma}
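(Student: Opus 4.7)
The plan is to use the complete independence of $\rho$ (which is in force in the enclosing proof of Proposition~\ref{proposition: properties ort pol 2}) to reduce the statement to an $L^2$-continuity argument for the product map. First, I would observe that every element of $\mathcal{P}_m(A)$ depends on $\eta$ only through the restriction $\eta_A$, since the generators $\eta^{\otimes k}(f_k)$ with $f_k$ supported in $A^k$ have this property. Because $L^2$-limits preserve measurability with respect to a sub-$\sigma$-algebra, every $F \in \overline{\mathcal{P}_m(A)}$ is $\sigma(\eta_A)$-measurable up to $\rho$-null sets, and similarly $G \in \overline{\mathcal{P}_n(B)}$ is $\sigma(\eta_B)$-measurable. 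Disjointness of $A$ and $B$ together with complete independence of $\rho$ yields $\eta_A \perp \eta_B$, hence
\begin{equation*}
    \|FG\|_{L^2(\rho)}^2 = \int F^2 G^2 \dd \rho = \|F\|_{L^2(\rho)}^2 \|G\|_{L^2(\rho)}^2,
\end{equation*}
so in particular $FG \in L^2(\rho)$.

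Next, I would choose approximating sequences $F_k \in \mathcal{P}_m(A)$ and $G_\ell \in \mathcal{P}_n(B)$ with $F_k \to F$ and $G_\ell \to G$ in $L^2(\rho)$, and check directly that each product $F_k G_\ell$ lies in $\mathcal{P}_{m+n}(A \cup B)$. Expanding $F_k$ and $G_\ell$ as linear combinations of monomials $\eta^{\otimes j}(f_j)$, $j \leq m$, with $f_j$ vanishing off $A^j$, respectively $\eta^{\otimes i}(g_i)$, $i \leq n$, with $g_i$ vanishing off $B^i$, this reduces to the identity
\begin{equation*}
    \eta^{\otimes j}(f_j) \cdot \eta^{\otimes i}(g_i) = \eta^{\otimes (j+i)}(f_j \otimes g_i),
\end{equation*}
whose right-hand side is a monomial of degree $j+i \leq m+n$ supported in $(A \cup B)^{j+i}$. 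Applying the triangle inequality together with the multiplicative norm identity above,
\begin{equation*}
    \|F_k G_\ell - FG\|_{L^2(\rho)} \leq \|F_k - F\|_{L^2(\rho)} \|G_\ell\|_{L^2(\rho)} + \|F\|_{L^2(\rho)} \|G_\ell - G\|_{L^2(\rho)} \longrightarrow 0
\end{equation*}
as $k,\ell \to \infty$, so $FG$ lies in the $L^2$-closure of $\mathcal{P}_{m+n}(A \cup B)$.

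The main obstacle is that the pointwise product of two arbitrary $L^2$ functions need not lie in $L^2$, so the bilinear map $(F,G) \mapsto FG$ is not continuous on $L^2 \times L^2$ in general. It is precisely here that complete independence does the work: by factorizing the second moment, it upgrades Cauchy--Schwarz to the sharp identity $\|FG\|_{L^2(\rho)} = \|F\|_{L^2(\rho)} \|G\|_{L^2(\rho)}$, which is what makes the passage to the $L^2$-closure legitimate.
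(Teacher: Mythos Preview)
Your proof is correct and follows essentially the same route as the paper: approximate $F$ and $G$ by polynomials, use complete independence to factorize the $L^2$-norm of products (which is the key step making the bilinear product map continuous here), and conclude via the triangle inequality. The paper's version is slightly terser---it uses a single diagonal sequence and does not spell out the identity $\eta^{\otimes j}(f_j)\,\eta^{\otimes i}(g_i)=\eta^{\otimes(j+i)}(f_j\otimes g_i)$---but the argument is the same.
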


\begin{proof}
	Write $||\cdot||$ for the $L^2(\rho)$-norm. 
	Let $(F_k)_{k\in \N}$ and $(G_k)_{k\in \N}$ be sequences in $\mathcal P_m(A)$ and $\mathcal P_n(B)$, respectively, with $||F-F_k||\to 0$ and $||G-G_k||\to 0$. We have $F_k(\eta) = F_k(\eta_A)$ for all $k$ and $\eta$ hence $F(\eta) = F(\eta_A)$ for $\rho$-almost all $\eta$. Similarly $G_k$ and $G$ depend on $\eta_B$ only. The triangle inequality and the complete independence yield
	\begin{align*} 
		||FG - F_kG_k|| &\leq || (F-F_k) G|| + || F_k(G- G_k)|| \\
			& = || F- F_k||\, ||G|| + ||F_k||\, ||G-G_k|| \to 0.
	\end{align*} 
	As each product $F_kG_k$ is in $\mathcal P_{m+n}(A\cup B)$, the limit $FG$ is in the closure $\overline{\mathcal P_{m+n}(A\cup B)}$.
\end{proof} 

\begin{proof}[Proof of Proposition~\ref{proposition: properties ort pol 2}]
    It is enough to treat the case $N=2$; the general case follows by an induction over $N$. Let $A_1$ and $A_2$ be two disjoint measurable subsets in $\mathcal E$. Let $d_1,d_2$ be two integers and $f_i:E^{d_i}\to \R$, $i=1,2$ be two bounded measurable functions that vanish outside $A_1^{d_1}$ and $A_2^{d_2}$ respectively.  
    By Lemma~\ref{lem:localize}, there exist maps $Q_i\in \overline{\mathcal P_{d_i-1}(A_i)}$, $i=1,2$, such that 
    \[
    	I_{d_1}(f_1,\eta) = \eta^{\otimes d_1} (f_1) - Q_1(\eta),\quad I_{d_2}(f_2,\eta)= \eta^{\otimes d_2} (f_2) - Q_2(\eta)
    \]
    for $\rho$-almost all $\eta$. Therefore by Lemma~\ref{lem:product-polynomials}, we have 
    \begin{equation} \label{eq:iproduct}
    	I_{d_1}(f_1,\eta) I_{d_2}(f_2,\eta) = \eta^{\otimes d_1}(f_1)\eta^{\otimes d_2} (f_2) - Q(\eta)
    \end{equation}
    with $Q\in \overline{\mathcal P}_{d_1+d_2-1}$. 
    Let $s_1,s_2\in \N_0$ and $C_1\in \mathcal E^{s_1}$, $C_2\in \mathcal E^{s_2}$ with $s_1+s_2\leq d_1+d_2-1$ and $C_1\subset A_1^{s_1}$, $C_2\subset (A_1^c)^{s_2}$. Then, by the complete independence, 
    \[
    	\int I_{d_1}(f_1,\eta) I_{d_2}(f_2,\eta)\, \eta^{\otimes(s_1+s_2)}(C_1\times C_2)\rho(\dd\eta) 
    	 = \prod_{i=1}^2 \int I_{d_i}(f_i,\eta)\, \eta^{\otimes s_i}(C_i) \rho(\dd \eta). 
    \] 
    We must have $s_1\leq d_1-1$ or $s_2\leq d_2-1$, therefore at least one of the integrals on the right side vanishes and the product $I_{d_1}(f_1,\eta) I_{d_2}(f_2,\eta)$ is orthogonal to $\eta^{\otimes n}(C)$. We conclude with an argument similar to the proof of Lemma~\ref{lem:localize} that $I_{d_1}(f_1,\eta) I_{d_2}(f_2,\eta)$ is in fact orthogonal to $\overline{\mathcal P}_{d_1+d_2-1}$. It follows that the product is equal to $I_{d_1+d_2}(f_1\otimes f_2,\eta)$ for $\rho$-almost all $\eta$. 
\end{proof}

\end{appendices}

\bibliographystyle{acm}
\bibliography{main}

\begin{thebibliography}{10}

\bibitem{accardi2009quantum}
{\sc Accardi, L., and Boukas, A.}
\newblock {Quantum probability, renormalization and infinite-dimensional*-Lie
  algebras}.
\newblock {\em SIGMA. Symmetry, Integrability and Geometry: Methods and
  Applications 5\/} (2009), Paper 056.

\bibitem{accardi2002renormalized}
{\sc Accardi, L., Franz, U., and Skeide, M.}
\newblock {Renormalized Squares of White Noise and Other Non-Gaussian Noises as
  L\'evy Processes on Real Lie Algebras}.
\newblock {\em Communications in Mathematical Physics 228}, 1 (2002), 123--150.

\bibitem{al1976convolutions}
{\sc Al-Salam, W., and Chihara, T.}
\newblock {Convolutions of orthonormal polynomials}.
\newblock {\em SIAM Journal on Mathematical Analysis 7}, 1 (1976), 16--28.

\bibitem{ayala2018quantitative}
{\sc Ayala, M., Carinci, G., and Redig, F.}
\newblock {Quantitative Boltzmann--Gibbs principles via orthogonal polynomial
  duality}.
\newblock {\em Journal of Statistical Physics 171}, 6 (2018), 980--999.

\bibitem{ayala2021higher}
{\sc Ayala, M., Carinci, G., and Redig, F.}
\newblock {Higher order fluctuation fields and orthogonal duality polynomials}.
\newblock {\em Electronic Journal of Probability 26\/} (2021), 1--35.

\bibitem{berezansky1996infinite-dimensional}
{\sc Berezansky, Y.}
\newblock Infinite-dimensional non-gaussian analysis and generalized shift
  operators.
\newblock {\em Functional Analysis and Its Applications 30}, 4 (1996), 61--65.

\bibitem{berezansky2002pascal}
{\sc Berezansky, Y.}
\newblock Pascal measure on generalized functions and the corresponding
  generalized {M}eixner polynomials.
\newblock {\em Methods of Functional Analysis and Topology 8}, 1 (2002), 1--13.

\bibitem{blumenthal-getoor}
{\sc Blumenthal, R., and Getoor, R.}
\newblock {\em Markov processes and potential theory}.
\newblock Pure and Applied Mathematics, Vol. 29. Academic Press, New
  York-London, 1968.

\bibitem{brockington2021bethe}
{\sc Brockington, D., and Warren, J.}
\newblock {The Bethe Ansatz for Sticky Brownian Motions}.
\newblock Preprint. arXiv:2104.06482, 2021.

\bibitem{carinci2019orthogonal}
{\sc Carinci, G., Franceschini, C., Giardin{\`a}, C., Groenevelt, W., and
  Redig, F.}
\newblock {Orthogonal dualities of Markov processes and unitary symmetries}.
\newblock {\em SIGMA. Symmetry, Integrability and Geometry: Methods and
  Applications 15\/} (2019), Paper 053.

\bibitem{carinci2013duality}
{\sc Carinci, G., Giardin{\`a}, C., Giberti, C., and Redig, F.}
\newblock {Duality for stochastic models of transport}.
\newblock {\em Journal of Statistical Physics 152}, 4 (2013), 657--697.

\bibitem{CARINCI2015941}
{\sc Carinci, G., Giardin{\`a}, C., Giberti, C., and Redig, F.}
\newblock Dualities in population genetics: A fresh look with new dualities.
\newblock {\em Stochastic Processes and their Applications 125}, 3 (2015),
  941--969.

\bibitem{carinci2021consistent}
{\sc Carinci, G., Giardin{\`a}, C., and Redig, F.}
\newblock Consistent particle systems and duality.
\newblock {\em Electronic Journal of Probability 26\/} (2021), 1--31.

\bibitem{daley-vere-jones-vol1}
{\sc Daley, D., and Vere-Jones, D.}
\newblock {\em {An introduction to the theory of point processes. Vol. {I}:
  Elementary theory and methods}}, second~ed.
\newblock Probability and its Applications. Springer, New York, 2003.

\bibitem{DualitySpatiallyFlemingViot}
{\sc Dawson, D., and Greven, A.}
\newblock {Duality for spatially interacting Fleming-Viot processes with
  mutation and selection}.
\newblock Preprint. arXiv:1104.1099, 2011.

\bibitem{DeMasiPresutti}
{\sc {De Masi}, A., and Presutti, E.}
\newblock {\em {Mathematical methods for hydrodynamic limits}}.
\newblock Lecture Notes in Mathematics. Springer, Berlin, 1991.

\bibitem{derrida2001free}
{\sc Derrida, B., Lebowitz, J., and Speer, E.}
\newblock {Free energy functional for nonequilibrium systems: an exactly
  solvable case}.
\newblock {\em Physical Review Letters 87}, 15 (2001), 150601.

\bibitem{dinunno-oksendal-proske2004}
{\sc {Di Nunno}, G., {\O}ksendal, B., and Proske, F.}
\newblock White noise analysis for {L}\'{e}vy processes.
\newblock {\em Journal of Functional Analysis 206}, 1 (2004), 109--148.

\bibitem{Etheridge00}
{\sc Etheridge, A.}
\newblock {\em {An Introduction to Superprocesses}}.
\newblock AMS, University Lecture Series, 2000.

\bibitem{EthierKurtz}
{\sc Ethier, S., and Kurtz, T.}
\newblock {\em Markov {Processes}, {Characterization} and {Convergence}}.
\newblock Wiley Series in Probability and Mathematical Statistics, 1986.

\bibitem{Feller}
{\sc Feller, W.}
\newblock {\em {An Introduction to Probability Theory and Its Applications,
  Vol. 2}}, 2~ed., vol.~Volume 2.
\newblock Wiley, 1971.

\bibitem{finkelshtein2021stirling}
{\sc Finkelshtein, D., Kondratiev, Y., Lytvynov, E., and Oliveira, M.}
\newblock {Stirling operators in spatial combinatorics}.
\newblock {\em Journal of Functional Analysis\/} (2021), Paper 109285, 45.

\bibitem{floreani2021switching}
{\sc Floreani, S., Giardin{\`a}, C., den Hollander, F., Nandan, S., and Redig,
  F.}
\newblock Switching interacting particle systems: scaling limits, uphill
  diffusion and boundary layer.
\newblock Preprint. arxiv:2107.06783, 2021.

\bibitem{floreani2020hydrodynamics}
{\sc Floreani, S., Redig, F., and Sau, F.}
\newblock {Hydrodynamics for the partial exclusion process in random
  environment}.
\newblock Preprint. arXiv:1911.12564, 2020.

\bibitem{floreani2020orthogonal}
{\sc Floreani, S., Redig, F., and Sau, F.}
\newblock {Orthogonal polynomial duality of boundary driven particle systems
  and non-equilibrium correlations}.
\newblock Preprint. arXiv:2007.08272, 2020.

\bibitem{franceschini2019stochastic}
{\sc Franceschini, C., and Giardin{\`a}, C.}
\newblock {Stochastic duality and orthogonal polynomials}.
\newblock In {\em Sojourns in Probability Theory and Statistical Physics-III}.
  Springer, 2019, pp.~187--214.

\bibitem{frassek2021exact}
{\sc Frassek, R., and Giardin{\`a}, C.}
\newblock {Exact solution of an integrable non-equilibrium particle system}.
\newblock Preprint. arXiv:2107.01720, 2021.

\bibitem{giardina2007duality}
{\sc Giardin{\`a}, C., Kurchan, J., and Redig, F.}
\newblock Duality and exact correlations for a model of heat conduction.
\newblock {\em Journal of Mathematical Physics 48}, 3 (2007), 033301.

\bibitem{gkrv}
{\sc Giardin{\`a}, C., Kurchan, J., Redig, F., and Vafayi, K.}
\newblock {Duality and Hidden Symmetries in Interacting Particle Systems}.
\newblock {\em Journal of Statistical Physics 135\/} (2009), 25--55.

\bibitem{Groenevelt2019}
{\sc Groenevelt, W.}
\newblock {Orthogonal Stochastic Duality Functions from Lie Algebra
  Representations}.
\newblock {\em Journal of Statistical Physics 174}, 1 (2019), 97--119.

\bibitem{howitt-warren2009}
{\sc Howitt, C., and Warren, J.}
\newblock Consistent families of {B}rownian motions and stochastic flows of
  kernels.
\newblock {\em The Annals of Probability 37}, 4 (2009), 1237--1272.

\bibitem{Kallenberg2017}
{\sc Kallenberg, O.}
\newblock {\em {Random Measures, Theory and Applications}}.
\newblock Springer, 2017.

\bibitem{CompletelyRandomMeasures}
{\sc Kingman, J.}
\newblock {Completely random measures}.
\newblock {\em Pacific Journal of Mathematics 21}, 1 (1967), 59 -- 78.

\bibitem{kingman1992poisson}
{\sc Kingman, J.}
\newblock {\em Poisson processes}, vol.~3 of {\em Oxford Studies in
  Probability}.
\newblock The Clarendon Press, Oxford University Press, New York, 1993.

\bibitem{kipnis1982heat}
{\sc Kipnis, C., Marchioro, C., and Presutti, E.}
\newblock {Heat flow in an exactly solvable model}.
\newblock {\em Journal of Statistical Physics 27}, 1 (1982), 65--74.

\bibitem{HypergeometricOrthogonalPolynomials}
{\sc Koekoek, R., Lesky, P., and Swarttouw, R.}
\newblock {\em Hypergeometric Orthogonal Polynomials and their q-Analogues}.
\newblock Springer, Heidelberg, 2010.

\bibitem{kondratiev2009hydrodynamic}
{\sc Kondratiev, Y., Kuna, T., Oliveira, M., da~Silva, J., and Streit, L.}
\newblock {Hydrodynamic limits for the free Kawasaki dynamics of continuous
  particle systems}.
\newblock Preprint. arXiv:0912.1312, 2009.

\bibitem{KondratievLytvynov2007}
{\sc Kondratiev, Y., Lytvynov, E., and Röckner, M.}
\newblock {Equilibrium Kawasaki Dynamics Of Continuous Particle Systems}.
\newblock {\em Infinite Dimensional Analysis, Quantum Probability and Related
  Topics 10}, 02 (2007), 185--209.

\bibitem{kozubowski2008distributional}
{\sc Kozubowski, T., and Podg\'{o}rski, K.}
\newblock {Distributional properties of the negative binomial L\'evy process}.
\newblock {\em Probability and Mathematical Statistics 29}, 1 (2009), 43--71.

\bibitem{Last2016}
{\sc Last, G.}
\newblock Stochastic analysis for {P}oisson processes.
\newblock In {\em Stochastic analysis for {P}oisson point processes. Malliavin
  calculus, Wiener-It{\^o} chaos expansions and stochastic geometry},
  G.~Peccati and M.~Reitzner, Eds., vol.~7 of {\em Bocconi Springer Ser.}
  Bocconi Univ. Press, 2016, pp.~1--36.

\bibitem{Last2011}
{\sc Last, G., and Penrose, M.}
\newblock {Poisson process Fock space representation, chaos expansion and
  covariance inequalities}.
\newblock {\em Probability Theory and Related Fields 150}, 3 (2011), 663--690.

\bibitem{LastPenroseLecturesOnThePoissonProcess}
{\sc Last, G., and Penrose, M.}
\newblock {\em {Lectures on the Poisson Process}}.
\newblock Institute of Mathematical Statistics Textbooks. Cambridge University
  Press, 2017.

\bibitem{LeJanRaimond}
{\sc {Le Jan}, Y., and Raimond, O.}
\newblock {Flows, coalescence and noise}.
\newblock {\em The Annals of Probability 32}, 2 (2004), 1247 -- 1315.

\bibitem{lenard73}
{\sc Lenard, A.}
\newblock {Correlation functions and the uniqueness of the state in classical
  statistical mechanics}.
\newblock {\em Communications in Mathematical Physics 30}, 1 (1973), 35 -- 44.

\bibitem{liggett_interacting_2005}
{\sc Liggett, T.}
\newblock {\em {Interacting particle systems}}, 2~ed.
\newblock Classics in mathematics. Springer, Berlin, 2005.

\bibitem{lokka-proske2006}
{\sc L{\o}kka, A., and Proske, F.}
\newblock Infinite dimensional analysis of pure jump {L}\'{e}vy processes on
  the {P}oisson space.
\newblock {\em Mathematica Scandinavica 98}, 2 (2006), 237--261.

\bibitem{Lytvynov2003}
{\sc Lytvynov, E.}
\newblock {Orthogonal Decompositions For Lévy Processes With An Application To
  The Gamma, Pascal, And Meixner Processes}.
\newblock {\em Infinite Dimensional Analysis, Quantum Probability and Related
  Topics\/} (2003), 73--102.

\bibitem{lytvynov2003JFA}
{\sc Lytvynov, E.}
\newblock Polynomials of {M}eixner's type in infinite dimensions---{J}acobi
  fields and orthogonality measures.
\newblock {\em Journal of Functional Analysis 200}, 1 (2003), 118--149.

\bibitem{meyer2006quantum}
{\sc Meyer, P.}
\newblock {\em {Quantum Probability for Probabilists}}, 2~ed., vol.~1538 of
  {\em Lecture Notes in Mathematics}.
\newblock Springer, Berlin, 1995.

\bibitem{meyerbrandis2008}
{\sc {Meyer-Brandis}, T.}
\newblock Differential equations driven by {L}\'{e}vy white noise in spaces of
  {H}ilbert space-valued stochastic distributions.
\newblock {\em Stochastics. An International Journal of Probability and
  Stochastic Processes 80}, 4 (2008), 371--396.

\bibitem{nualart-schoutens2000}
{\sc Nualart, D., and Schoutens, W.}
\newblock Chaotic and predictable representations for {L}\'{e}vy processes.
\newblock {\em Stochastic Processes and their Applications 90}, 1 (2000),
  109--122.

\bibitem{pitman2006combinatorial}
{\sc Pitman, J.}
\newblock {\em Combinatorial stochastic processes}, vol.~1875 of {\em Lecture
  Notes in Mathematics}.
\newblock Springer, Berlin, 2006.
\newblock Lectures from the 32nd Summer School on Probability Theory held in
  Saint-Flour, July 7--24, 2002, With a foreword by Jean Picard.

\bibitem{redig_factorized_2018}
{\sc Redig, F., and Sau, F.}
\newblock {Factorized Duality, Stationary Product Measures and Generating
  Functions}.
\newblock {\em Journal of Statistical Physics 172}, 4 (2018), 980--1008.

\bibitem{schertzer_sun_swart_2016}
{\sc Schertzer, E., Sun, R., and Swart, J.}
\newblock {The Brownian web, the Brownian net, and their universality}.
\newblock In {\em Advances in disordered systems, random processes and some
  applications}. Cambridge Univ. Press, Cambridge, 2017, pp.~270--368.

\bibitem{schoutens}
{\sc Schoutens, W.}
\newblock {\em Stochastic processes and orthogonal polynomials}, vol.~146 of
  {\em Lecture Notes in Statistics}.
\newblock Springer, New York, 2000.

\bibitem{serfozo1990point}
{\sc Serfozo, R.}
\newblock Point processes.
\newblock In {\em Stochastic models}, vol.~2 of {\em Handbooks in operations
  research and management science}. North-Holland, Amsterdam, 1990, pp.~1--93.

\bibitem{surgailis84}
{\sc Surgailis, D.}
\newblock On multiple {P}oisson stochastic integrals and associated {M}arkov
  semigroups.
\newblock {\em Probability and Mathematical Statistics 3}, 2 (1984), 217--239.

\bibitem{CalculusVariationsProcessesIndependentIncrements}
{\sc Yablonski, A.}
\newblock {The Calculus of Variations for Processes with Independent
  Increments}.
\newblock {\em Rocky Mountain Journal of Mathematics 38}, 2 (2008), 669 -- 701.

\end{thebibliography}

\end{document}